\documentclass[reqno,10pt,centertags]{amsart} 
\usepackage{amsmath,amsthm,amscd,amssymb,latexsym,esint,upref,stmaryrd,
enumerate,color,verbatim,yfonts,enumitem}
\usepackage{hyperref} 
\newcommand*{\mailto}[1]{\href{mailto:#1}{\nolinkurl{#1}}}
\newcommand{\arxiv}[1]{\href{http://arxiv.org/abs/#1}{arXiv:#1}}

\setenumerate{label=$($\textit{\roman*}$)$}

\newcommand{\R}{{\mathbb R}}

\newcommand{\bbC}{{\mathbb{C}}}

\newcommand{\bbN}{{\mathbb{N}}}

\newcommand{\bbR}{{\mathbb{R}}}

\newcommand{\cH}{{\mathcal H}}

\newcommand{\cN}{{\mathcal N}}

\newcommand{\cW}{{\mathcal W}}

\renewcommand{\a}{\alpha}
\renewcommand{\b}{\beta}
\newcommand{\g}{\gamma}



\DeclareMathOperator{\supp}{supp}

\DeclareMathOperator{\dom}{dom}

\renewcommand{\ln}{\text{\rm ln}}

\newcommand{\norm}[1]{\lVert#1\rVert}

\newcommand{\no}{\notag}
\newcommand{\lb}{\label}
\newcommand{\f}{\frac}

\newcommand{\ol}{\overline}
\newcommand{\bs}{\backslash}

\newcommand{\wti}{\widetilde}

\newcommand{\oh}{o}
\newcommand{\hatt}{\widehat} 
\newcommand{\dott}{\,\cdot\,}

\renewcommand{\dot}{\overset{\textbf{\Large.}}}

\renewcommand{\dotplus}{\overset{\textbf{\Large.}} +}

\newcommand{\bi}{\bibitem}


\newcommand{\lam}{\lambda}

\newcommand{\al}{\alpha}
\newcommand{\be}{\beta}

\newcommand{\Lr}{{L^2((a,b);rdx)}}

\newcommand{\ACl}{{AC_{loc}((a,b))}}
\newcommand{\Ll}{{L^1_{loc}((a,b);dx)}}

\makeatletter
\def\theequation{\@arabic\c@equation}


\allowdisplaybreaks 
\numberwithin{equation}{section}
\usepackage{tikz}
\newtheorem{theorem}{Theorem}[section]
\newtheorem{proposition}[theorem]{Proposition}
\newtheorem{lemma}[theorem]{Lemma}
\newtheorem{corollary}[theorem]{Corollary}
\newtheorem{definition}[theorem]{Definition}
\newtheorem{hypothesis}[theorem]{Hypothesis}
\newtheorem{example}[theorem]{Example}

\theoremstyle{remark}

\newenvironment{remark}[1][]{\refstepcounter{theorem}\par\medskip\noindent\textit{Remark~$\theexample. #1$} \rmfamily}{{\ }\hfill $\diamond$ \vspace{6pt}}


\begin{document}

\title[Nonnegative extensions of Sturm--Liouville operators]{Nonnegative extensions of Sturm--Liouville operators with an application to problems with symmetric coefficient functions}

\author[C.\ Fischbacher]{Christoph Fischbacher} 
\address[C. Fischbacher]{Department of Mathematics, 
Baylor University, Sid Richardson Bldg, 1410 S.\,4th Street, Waco, TX 76706, USA}
\email{\mailto{c\_fischbacher@baylor.edu}}
\urladdr{\url{https://math.artsandsciences.baylor.edu/person/christoph-fischbacher-phd}}

\author[J.\ Stanfill]{Jonathan Stanfill}
\address[J. Stanfill]{Department of Mathematics, The Ohio State University \\
100 Math Tower, 231 West 18th Avenue, Columbus, OH 43210, USA}
\email{\mailto{stanfill.13@osu.edu}}
\urladdr{\url{https://u.osu.edu/stanfill-13/}}

\date{\today}
\@namedef{subjclassname@2020}{\textup{2020} Mathematics Subject Classification}
\subjclass[2020]{Primary: 34B20, 34B24, 34L05; Secondary: 47A10,
47E05.}
\keywords{Singular Sturm--Liouville operators, generalized boundary values, nonnegative extensions, Bessel operators, symmetric potentials}

\begin{abstract}
The purpose of this paper is to study nonnegative self-adjoint extensions associated with singular Sturm--Liouville expressions with strictly positive minimal operators. We provide a full characterization of all possible nonnegative self-adjoint extensions of the minimal operator in terms of generalized boundary values, as well as a parameterization of all nonnegative extensions when fixing a boundary condition at one endpoint. In addition, we investigate problems where the coefficient functions are symmetric about the midpoint of a finite interval, illustrating how every self-adjoint operator of this form is unitarily equivalent to the direct sum of two self-adjoint operators restricted to half of the interval. We also extend these result to symmetric two interval problems. We then apply our previous results to parameterize all nonnegative extensions of operators with symmetric coefficient functions. We end with an example of an operator with a symmetric Bessel-type potential (i.e., symmetric confining potential) and an application to integral inequalities.
\end{abstract}

\maketitle


\section{Introduction}\lb{s1}

The purpose of this paper is to study nonnegative self-adjoint extensions associated with singular, three-coefficient Sturm--Liouville expressions
\begin{align*}
    \tau = \dfrac{1}{r(x)}\Big[-\dfrac{d}{dx} p(x)  \dfrac{d}{dx} + q(x) \Big] \ \text{ for a.e.~$x\in(a,b) \subseteq \R$},
\end{align*}
(see Hypothesis \ref{h2.1} for details)
with strictly positive minimal operators. 
While Sturm--Liouville operators are enormously well studied, to the best of our knowledge, such a useful parametrization has not been written down explicitly in terms of generalized boundary values before. For recent treatments of Sturm--Liouville theory with encyclopedic references, we refer the reader to Gesztesy, Nichols, and Zinchenko \cite{GNZ24} and the monograph \cite{Ze05} by Zettl.

Our main result is a full characterization of all nonnegative extensions for quasi-regular (limit circle at both endpoints) problems which are bounded from below- see Theorems \ref{Thm:dim2} and \ref{Thm:dim1}. The upshot of these results is that they hold for any choice of principal and nonprincipal solutions and the ingredients for the formulas are quite simple. It is important for context to note that Brown and Evans studied positive extensions via Birman-Kre\u\i n-Vishik-Grubb theory in \cite{BE16}, providing their characterizations in \cite[Thms. 2.1 and 2.2]{BE16} via principal and nonprincipal solutions of $\tau u=0$. Our approach differs in the use of generalized boundary values and Theorem \ref{t3.1} leading to more explicit expressions that allow us to readily compare to the classic characterization of self-adjoint extensions via separated and coupled boundary conditions, which was not done in \cite{BE16}- see Remark \ref{r3.4} for more details. Moreover, our results completely characterize when and how one can order such extensions, a new result with many nuances occurring since not all nonnegative extensions can be compared to one another. We explicitly compare a special case of our characterization to those of \cite{BE16} in Remark \ref{Rem:3.3} $(iv)$ whenever the right endpoint is in the limit point case.

We start with any principal, $u_a$, and nonprincipal, $\hatt u_a$, solutions of $\tau u=0$ at $x=a$ where each are solutions on $(a,b)$ with Wronskian 1, using $u_a,\hatt u_a$ to define the generalized boundary values at $x=a$ (see \eqref{2.20}, \eqref{2.21}). 
Then a distinguished nonprincipal solution, $\hatt v_a$, which is orthogonal to $u_a$ is found. The only explicit values needed to parameterize all nonnegative self-adjoint extensions consists of the generalized function and derivative values of $u_a,\hatt v_a,$ along with the $\Lr$-norms of $u_a,\hatt v_a$.
As an illustration of our main results, we consider the Bessel equation on an arbitrary finite interval and show how to easily implement our general parameterization- see also \cite{GPS24} for a recent study of Bessel-type operators, domain properties, and generalized boundary values.

After characterizing all nonnegative extensions, we also parameterize all nonnegative extensions of a strictly positive minimal operator with a fixed boundary condition at one endpoint in terms of generalized boundary values. While this part of the investigation was originally motivated by specific examples considered in the classic Alonso--Simon paper \cite{AS80}, as we will illustrate, this parameterization is particularly useful when the underlying coefficient functions of the differential expression satisfy certain symmetries (including two interval problems). We point out that such operators with symmetries have been studied previously (typically one-dimensional Schr\"odinger operators with potentials that satisfy symmetries), often with applications toward physical models with point interactions. Our results for two interval problems extend to many of these interactions since they often satisfy the symmetries we study. We refer the interested reader to \cite{ADK98,GM10,Ha78,Ha78a,LL24}.

As a natural application of these results, we investigate problems with coefficient functions which are symmetric about the midpoint of a finite interval, illustrating how every (reflection invariant) self-adjoint operator of this form is unitarily equivalent to the direct sum of two self-adjoint operators restricted to half of the interval. In particular, we show that the self-adjoint extensions of the minimal operator, $T_{min}$, associated with $\tau$, where $p,q,r$ are symmetric about $x=(b+a)/2$, satisfy (see Theorem \ref{t2.10} for the definition of the boundary conditions)
\begin{align}
T_{\a,\a}\ \text{ is unitarily equivalent to }\ T^{(a,(b+a)/2)}_{\alpha,\pi}\oplus T^{(a,(b+a)/2)}_{\alpha,\pi/2}, \label{1.2}\\
T_{0,R'}\ \text{ is unitarily equivalent to }\ T^{(a,(b+a)/2)}_{\alpha,\pi}\oplus T^{(a,(b+a)/2)}_{\alpha',\pi/2},\label{1.3}
\end{align}
in $L^2((a,(b+a)/2),rdx)\oplus L^2((a,(b+a)/2),rdx)$, where $\alpha\in(0,\pi]$ in \eqref{1.2} and $R'\in SL(2,\bbR)$ with $R_{11}=R_{22}$ (i.e., equal diagonal entries) in \eqref{1.3}, with $\a,\a'$ in \eqref{1.3} given in terms of trigonometric functions with arguments coming from the entries of $R'$.
This characterization lends itself to our previous fixed boundary condition analysis, allowing us to easily illustrate the ordering of nonnegative extensions (when this ordering exists) by considering the decompositions above. We further point out that \eqref{1.2} and \eqref{1.3} allow one to calculate (or approximate) the eigenvalues much simpler for these equations as one does not need the solution across the entire interval. This will be illustrated by a symmetric Bessel-type example. We further extend the decompositions \eqref{1.2} and \eqref{1.3} to two interval problems on $(-a,0)\cup(0,a)$ whose coefficient functions are symmetric about $x=0$, an important class of problems in many physical problems as mentioned above.

We end by remarking that a natural application of these results comes in the form of integral inequalities, with the study \cite{GPS21} partially motivating our investigation of Sturm--Liouville operators with symmetric coefficient functions. In particular, we apply our results to the Schr\"odinger expression
\begin{align}
\tau_\g=-\dfrac{d^2}{dx^2}+\big[\gamma^2-(1/4)\big]d^{-2}_{(a,b)}(x),\quad
\gamma\in[0,1),\, x\in(a,b),
\end{align}
where
\begin{equation}
d_{(a,b)}(x)=\begin{cases}
x-a,& x\in(a,(b+a)/2],\\
b-x,& x\in[(b+a)/2,b).
\end{cases}
\end{equation}
This then leads to the following inequality, originally proven in \cite{AW07}:
\begin{align}
\int_a^b  |f'(x)|^2\, dx  \geq \big[(1/4)-\g^2\big]\int_a^b d_{(a,b)}^{-2}(x)|f(x)|^2\, dx
+\frac{4\lambda_{\g,1}^2}{(b-a)^2}\int_a^b & |f(x)|^2\, dx,  \notag \\
\g\in[0,1),\ f  \in H_0^1((a,b)),\ a,b\in\bbR&,
\end{align}
where $\lambda_{j,1}$ is the first positive zero of a certain function (see \eqref{5.15}).
In particular, our results on the decomposition of the Friedrichs extension on the whole interval into the direct sum of two self-adjoint extensions on the half interval (\eqref{1.2} with $\a=\pi$) illustrates why the mixed Dirichlet--Neumann eigenvalue (the constant ${4\lambda_{\g,1}^2/(b-a)^2}$ in the example above) occurs. This eigenvalue will in fact be the optimal constant on the integral of $|f|^2$ for an integral inequality (on the appropriate domain) related to Sturm--Liouville problems with fixed symmetric coefficient functions by considering the infimum of the Friedrichs extension spectrum in \eqref{1.2}.

The structure of the paper is as follows. Section \ref{s2} recalls general singular Weyl--Titchmarsh--Kodaira theory that will be utilized throughout. In Section \ref{s3}, we study nonnegative extensions of a positive minimal operator. We then turn to singular Sturm--Liouville operators with coefficient functions which are symmetric about the midpoint
of a finite interval, and two interval problems, in Section \ref{s4}. We end with the Bessel equation and an example of a Schr\"odinger equation with a symmetric Bessel-type potential
(i.e., symmetric confining potential), along with an application to integral inequalities in Section \ref{s5}. Appendix \ref{App} includes the discussion of symmetries of the fundamental system of solutions and a direct factorization of the characteristic function associated with symmetric coefficient problems.

\section{Singular 
Weyl--Titchmarsh--Kodaira theory}\label{s2}

All results surveyed in this section can be found in \cite{GLN20} and \cite[Ch.~13]{GNZ24} which contain very detailed lists of references to the basics of Weyl--Titchmarsh theory. Here we just mention a few additional and classical sources such as \cite[Sect.~129]{AG81}, \cite[Ch.~6]{BHS20}, \cite{BFL20}, 
\cite[Chs.~8, 9]{CL85}, \cite[Sects.~13.6, 13.9, 13.0]{DS88}, 
\cite[Ch.~III]{JR76}, \cite[Ch.~V]{Na68}, \cite{NZ92}, \cite[Ch.~6]{Pe88}, \cite[Ch.~9]{Te14}, \cite[Sect.~8.3]{We80}, \cite[Ch.~13]{We03}, \cite[Chs.~4, 6--8]{Ze05}.

Throughout our discussion of singular Sturm--Liouville operators we make the following assumptions:

\begin{hypothesis} \label{h2.1}
Let $(a,b) \subseteq \bbR$ and suppose that $p,q,r$ are $($Lebesgue\,$)$ measurable functions on $(a,b)$ 
such that the following items $(i)$--$(iii)$ hold: \\[1mm] 
$(i)$ \hspace*{1.1mm} $r>0$ a.e.~on $(a,b)$, $r\in\Ll$. \\[1mm] 
$(ii)$ \hspace*{.1mm} $p>0$ a.e.~on $(a,b)$, $1/p \in\Ll$. \\[1mm] 
$(iii)$ $q$ is real-valued a.e.~on $(a,b)$, $q\in\Ll$. 
\end{hypothesis}

Given Hypothesis \ref{h2.1}, we now study Sturm--Liouville operators associated with the general, 
three-coefficient differential expression $\tau$ of the type,
\begin{align}\lb{2.1}
\tau=\f{1}{r(x)}\left[-\f{d}{dx}p(x)\f{d}{dx} + q(x)\right] \, \text{ for a.e.~$x\in(a,b) \subseteq \R$.} 
\end{align} 

\begin{definition} 
Assume Hypothesis \ref{h2.1}. Given $\tau$ as in \eqref{2.1}, the \textit{maximal operator} $T_{max}$ in $\Lr$ associated with $\tau$ is defined by
\begin{align}
&T_{max} f = \tau f,    \notag
\\
& f \in \dom(T_{max})=\big\{g\in\Lr \, \big| \,g,g^{[1]}\in\ACl;    \\ 
& \hspace*{6.3cm}  \tau g\in\Lr\big\},  \notag
\end{align}
with the Wronskian $($and quasi-derivative$)$ of $f,g\in\ACl$ defined by
\begin{equation}
W(f,g)(x) = f(x)g^{[1]}(x) - f^{[1]}(x)g(x), \quad
y^{[1]}(x) = p(x) y'(x), \quad x \in (a,b).
\end{equation}
The \textit{preminimal operator} $T_{min,0} $ in $\Lr$ associated with $\tau$ is defined by 
\begin{align}
&T_{min,0}  f = \tau f,   \notag
\\
&f \in \dom (T_{min,0})=\big\{g\in\Lr \, \big| \, g,g^{[1]}\in\ACl;   
\\
&\hspace*{3.25cm} \supp \, (g)\subset(a,b) \text{ is compact; } \tau g\in\Lr\big\}.   \notag
\end{align}
One can prove that $T_{min,0} $ is symmetric and thus closable. One then defines $T_{min}$ 
as the closure of $T_{min,0} $.
\end{definition}

It is well known that
$(T_{min,0})^* = T_{max},$
and hence $T_{max}$ is closed and $T_{min}=\ol{T_{min,0} }$ is given by
\begin{align}
&T_{min} f = \tau f, \no
\\
&f \in \dom(T_{min})=\big\{g\in\Lr  \, \big| \,  g,g^{[1]}\in\ACl;      \\
& \qquad \text{for all } h\in\dom(T_{max}), \, W(h,g)(a)=0=W(h,g)(b); \, \tau g\in\Lr\big\}   .
\no
\end{align}
Moreover, $T_{min,0} $ is essentially self-adjoint if and only if\; $T_{max}$ is symmetric, and then 
$\ol{T_{min,0} }=T_{min}=T_{max}$.

The celebrated Weyl alternative then can be stated as follows:

\begin{theorem}[Weyl's Alternative] \label{t2.3} ${}$ \\
Assume Hypothesis \ref{h2.1}. Then the following alternative holds: \\[1mm] 
$(i)$ For every $z\in\bbC$, all solutions $u$ of $(\tau-z)u=0$ are in $\Lr$ near $a$ 
$($resp., near $b$$)$. \\[1mm] 
$(ii)$  For every $z\in\bbC$, there exists at least one solution $u$ of $(\tau-z)u=0$ which is not in $\Lr$ near $a$ $($resp., near $b$$)$. In this case, for each $z\in\bbC\bs\bbR$, there exists precisely one solution $u_a$ $($resp., $u_b$$)$ of $(\tau-z)u=0$ $($up to constant multiples$)$ which lies in $\Lr$ near $a$ $($resp., near $b$$)$. 
\end{theorem}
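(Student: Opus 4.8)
This is Weyl's classical limit circle / limit point alternative, and the plan is to obtain it by combining two standard ingredients, both carried out on a fixed left neighborhood $(a,c)$ of $a$; the endpoint $b$ is then handled by the mirror argument on a right neighborhood of $b$. Throughout I write $\|f\|_x^2:=\int_x^c|f|^2r\,dt$ for $x\in(a,c)$ and ``$f\in\Lr$ near $a$'' for ``$f|_{(a,c)}\in L^2((a,c);r\,dx)$''. \emph{First (Step 1: $z$-independence of the limit circle case),} I would show that if every solution of $(\tau-z_0)u=0$ lies in $\Lr$ near $a$ for \emph{some} $z_0\in\bbC$, then the same holds for \emph{every} $z\in\bbC$. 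Fixing a fundamental system $u_1,u_2$ of $(\tau-z_0)v=0$ with $W(u_1,u_2)\equiv1$, the hypothesis makes $g:=(|u_1|^2+|u_2|^2)\,r$ integrable on $(a,c)$; a solution $u$ of $(\tau-z)u=0$ solves $(\tau-z_0)u=(z-z_0)u$, so variation of parameters yields, for $x\in(a,c)$,
\[
u(x)=c_1u_1(x)+c_2u_2(x)+(z-z_0)\int_x^c\big[u_2(x)u_1(t)-u_1(x)u_2(t)\big]\,u(t)\,r(t)\,dt .
\]
Estimating the integral with the Cauchy--Schwarz inequality gives $|u(x)|\le\big(A+B\,\|u\|_x\big)\big(|u_1(x)|+|u_2(x)|\big)$ with $A,B\ge0$ depending only on $c_1,c_2$, $z-z_0$, and $\int_a^c|u_i|^2r$; squaring and integrating, $F(x):=\|u\|_x^2$ obeys a Gronwall--type differential inequality $-F'(x)\le\big(C+D\,F(x)\big)g(x)$ with $C,D\ge0$, so $F$ remains bounded as $x\downarrow a$ and $u\in\Lr$ near $a$. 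In particular ``every solution of $(\tau-z)u=0$ is in $\Lr$ near $a$'' either holds for all $z\in\bbC$ or for none.

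\emph{Next (Step 2: Weyl's nested disks at non-real $z$),} I would fix $z$ with $\Im z\ne0$ and the solutions $\theta(z,\cdot),\phi(z,\cdot)$ of $(\tau-z)u=0$ with real initial data $\theta(z,c)=1$, $\theta^{[1]}(z,c)=0$, $\phi(z,c)=0$, $\phi^{[1]}(z,c)=1$, so $W(\theta,\phi)\equiv1$. For $x\in(a,c)$ and $\beta\in[0,\pi)$, let $m=m(x,\beta)$ be the number for which $\psi:=\theta+m\phi$ satisfies $\psi(x)\cos\beta+\psi^{[1]}(x)\sin\beta=0$. Since $\beta\mapsto m(x,\beta)$ is a M\"obius image of $\R$, its range is a circle $C_x$ bounding a closed disk $D_x\subset\bbC$, and the Lagrange identity applied to $\psi$ on $(x,c)$ (the boundary condition at $x$ forces $W(\psi,\overline{\psi})(x)=0$) shows that $m\in D_x$ implies $\|\theta+m\phi\|_x^2\le|\Im m/\Im z|$. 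The Green's identity computation $W(\phi,\overline{\phi})(x)=-2i(\Im z)\,\|\phi\|_x^2$ gives the radius $r_x=\big(2|\Im z|\,\|\phi\|_x^2\big)^{-1}$. The $D_x$ are thus nonempty, compact, nested, and decreasing as $x\downarrow a$, so they shrink to a closed disk $D_a:=\bigcap_xD_x$ of radius $r_a=\big(2|\Im z|\int_a^c|\phi|^2r\,dt\big)^{-1}$; hence $D_a$ is a single point $m_a$ exactly when $\phi\notin\Lr$ near $a$, and a genuine disk otherwise.

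\emph{Then (Step 3: assembling the alternative),} if $D_a$ has positive radius I would pick $m_1\ne m_2$ in $D_a$: then $\theta+m_1\phi$ and $\theta+m_2\phi$ lie in $\Lr$ near $a$ by the bound above and are linearly independent, so \emph{every} solution of $(\tau-z)u=0$ lies in $\Lr$ near $a$, and by Step 1 this is so for all $z\in\bbC$ --- alternative $(i)$. If instead $D_a=\{m_a\}$, then $m_a\in D_x$ for all $x$ gives $\|\theta+m_a\phi\|_x^2\le|\Im m_a/\Im z|$ for all $x$, hence $\theta+m_a\phi\in\Lr$ near $a$, while $\phi\notin\Lr$ near $a$; therefore the space of solutions of $(\tau-z)u=0$ lying in $\Lr$ near $a$ is exactly one-dimensional, since a second independent such solution would force $\phi\in\Lr$ near $a$. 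Thus for this $z$ there is a solution not in $\Lr$ near $a$ and, up to constant multiples, exactly one that is. By Step 1, ``not all solutions are in $\Lr$ near $a$'' then holds for every $z\in\bbC$, and since for each non-real $z$ the disk construction applies and must fall into this limit point case, the uniqueness statement holds for all $z\in\bbC\setminus\bbR$; this is alternative $(ii)$. Running the same argument at $b$ completes the proof.

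The step I expect to be the real obstacle is Step 1: the variation-of-parameters integral equation is anchored at the singular endpoint $a$, so the Gronwall estimate must be set up --- iterating the Cauchy--Schwarz bound and carefully tracking $\int_a^c|u_i|^2r$ --- so that the implied constants are genuinely uniform as $x\downarrow a$. Steps 2 and 3 are then essentially bookkeeping with the Lagrange and Green identities, and the one computation I would keep in full is the radius formula $r_x=\big(2|\Im z|\,\|\phi\|_x^2\big)^{-1}$, since it is precisely what links ``$D_a$ collapses to a point'' with ``$\phi\notin\Lr$ near $a$'', i.e., with the limit point case.
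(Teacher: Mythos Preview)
Your proof is the standard Weyl nested-disk argument and is correct. Note, however, that the paper does not actually supply its own proof of this theorem: Section~\ref{s2} is explicitly a survey of background material, and Theorem~\ref{t2.3} is simply quoted from the references (in particular \cite{GLN20} and \cite[Ch.~13]{GNZ24}), so there is no ``paper's proof'' to compare against. What you have written is essentially the classical argument one finds in those sources: the $z$-independence step via variation of parameters plus a Gronwall-type estimate, followed by the nested-disk construction at non-real $z$ and the radius formula $r_x=(2|\Im z|\,\|\phi\|_x^2)^{-1}$ to distinguish the limit point and limit circle cases.
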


This yields the limit circle/limit point classification of $\tau$ at an interval endpoint and links self-adjointness of $T_{min}$ (resp., $T_{max}$) and the limit point property of $\tau$ at both endpoints as follows. 

\begin{definition} Assume Hypothesis \ref{h2.1}. \\[1mm]  
In case $(i)$ in Theorem \ref{t2.3}, $\tau$ is said to be in the \textit{limit circle case} at $a$ $($resp., at $b$$)$. $($Frequently, $\tau$ is then called \textit{quasi-regular} at $a$ $($resp., $b$$)$.$)$
\\[1mm] 
In case $(ii)$ in Theorem \ref{t2.3}, $\tau$ is said to be in the \textit{limit point case} at $a$ $($resp., at $b$$)$. \\[1mm]
If $\tau$ is in the limit circle case at $a$ and $b$ then $\tau$ is also called \textit{quasi-regular} on $(a,b)$. 
\end{definition}

\begin{theorem} 
Assume Hypothesis~\ref{h2.1}, then the following items $(i)$ and $(ii)$ hold: \\[1mm] 
$(i)$ If $\tau$ is in the limit point case at $a$ $($resp., $b$$)$, then 
\begin{equation} 
W(f,g)(a)=0 \, \text{$($resp., $W(f,g)(b)=0$$)$ for all $f,g\in\dom(T_{max})$.} 
\end{equation} 
$(ii)$ Recalling $T_{min}=\ol{T_{min,0} }$, one has
\begin{align}
\begin{split}
n_\pm(T_{min}) &= \dim(\ker(T_{max} \mp i I))    \\
& = \begin{cases}
2 & \text{if $\tau$ is in the limit circle case at $a$ and $b$,}\\
1 & \text{if $\tau$ is in the limit circle case at $a$} \\
& \text{and in the limit point case at $b$, or vice versa,}\\
0 & \text{if $\tau$ is in the limit point case at $a$ and $b$}.
\end{cases}
\end{split} 
\end{align}
In particular, $T_{min} = T_{max}$ is self-adjoint if and only if $\tau$ is in the limit point case at $a$ and $b$. 
\end{theorem}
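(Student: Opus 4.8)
The plan is to prove item $(i)$ first and then use it to settle the one genuinely delicate subcase of item $(ii)$. For $(i)$, assume $\tau$ is in the limit point case at $a$; the endpoint $b$ is symmetric. Lagrange's identity gives, for $f,g\in\dom(T_{max})$ and $a<x<x_0<b$,
\[
W(f,g)(x_0)-W(f,g)(x)=\int_x^{x_0}\big[(\tau f)g-f(\tau g)\big]\,r\,dt ,
\]
and since $f,g,\tau f,\tau g\in\Lr$ the integrand lies in $L^1$ near $a$ by Cauchy--Schwarz, so $W(f,g)(a):=\lim_{x\to a^+}W(f,g)(x)$ exists; moreover it depends only on the behavior of $f$ and $g$ near $a$. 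Fixing $c\in(a,b)$ and writing $T_{max}^{(a,c)},T_{min}^{(a,c)},T_{min,0}^{(a,c)}$ for the corresponding operators of $\tau$ restricted to $(a,c)$, it therefore suffices to prove $W(f,g)(a)=0$ for all $f,g\in\dom(T_{max}^{(a,c)})$. Since $c$ is a regular endpoint for the $(a,c)$-problem, Theorem \ref{t2.3} shows that every solution of $(\tau\mp i)u=0$ lies in $\Lr$ near $c$, while exactly one solution up to scalars lies in $\Lr$ near $a$; hence $\dim\ker(T_{max}^{(a,c)}\mp iI)=1$, and because $(T_{min,0}^{(a,c)})^*=T_{max}^{(a,c)}$ the deficiency indices of $T_{min}^{(a,c)}$ are both $1$, so $V:=\dom(T_{max}^{(a,c)})/\dom(T_{min}^{(a,c)})$ is two-dimensional.

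The technical core is a variation-of-parameters lemma: let $v$ be the (essentially unique) solution of $(\tau-i)u=0$ lying in $\Lr$ near $a$ and $w$ a linearly independent solution normalized by $W(w,v)=1$ (a constant, since $v$ and $w$ solve the same equation), so that $w\notin\Lr$ near $a$ by the limit point hypothesis; then $\lim_{x\to a^+}W(f,v)(x)=0$ for every $f\in\dom(T_{max}^{(a,c)})$. Indeed, expanding $f=c_f(x)v+d_f(x)w$ by variation of parameters, with $(\tau-i)f=k\in\Lr$, one has $W(f,v)(x)=d_f(x)$, and square integrability of $f$ near $a$ together with $w\notin\Lr$ near $a$ and $vk\in L^1$ near $a$ forces $d_f(x)\to0$. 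Now the bilinear form $\omega(f,g):=W(f,g)(a)$ on $\dom(T_{max}^{(a,c)})$ is alternating, since $\omega(f,f)=0$, and it vanishes whenever either argument lies in $\dom(T_{min}^{(a,c)})$ by the very description of that domain; hence $\omega$ descends to an alternating bilinear form $\bar\omega$ on $V$. The lemma says exactly that $[v]$ lies in the radical of $\bar\omega$, and $[v]\neq0$ in $V$ because $v\in\dom(T_{min}^{(a,c)})$ would make $v$ a nonzero eigenvector of the symmetric operator $T_{min}^{(a,c)}$ with non-real eigenvalue $i$. An alternating form on a two-dimensional space with nontrivial radical is identically zero, so $\bar\omega\equiv0$, i.e.\ $W(f,g)(a)=0$ on $\dom(T_{max}^{(a,c)})$; this proves $(i)$.

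For $(ii)$, the identity $n_\pm(T_{min})=\dim\ker(T_{max}\mp iI)$ is immediate from $T_{min}=\overline{T_{min,0}}$ and $(T_{min,0})^*=T_{max}$, which give $(T_{min})^*=T_{max}$. Now $u\in\ker(T_{max}-iI)$ exactly when $u$ solves $(\tau-i)u=0$ and $u\in\Lr$, i.e.\ $u\in\Lr$ near both $a$ and $b$; the solution space is two-dimensional, and by Theorem \ref{t2.3} the solutions lying in $\Lr$ near a given endpoint fill the whole space in the limit circle case and form a one-dimensional subspace in the limit point case. Thus limit circle at both endpoints forces all solutions into $\Lr$ globally, giving $\dim\ker(T_{max}-iI)=2$; the mixed case gives exactly the one-dimensional subspace attached to the limit point endpoint, so the dimension is $1$; and in the limit point/limit point case the two one-dimensional subspaces have nontrivial intersection only if they coincide, in which case a single $w\neq0$ solving $(\tau-i)u=0$ lies in $\Lr$ globally, so $w,\bar w\in\dom(T_{max})$ and Lagrange's identity gives
\[
2i\|w\|^2=\langle\tau w,w\rangle-\langle w,\tau w\rangle=W(w,\bar w)(b)-W(w,\bar w)(a)=0
\]
by item $(i)$ at each (limit point) endpoint, whence $w=0$, a contradiction, so the dimension is $0$. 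Repeating the argument with $-i$ gives the same count for $\ker(T_{max}+iI)$, which yields the displayed case distinction; finally $T_{min}$ is self-adjoint iff $n_+=n_-=0$ iff $\tau$ is limit point at $a$ and $b$, and then $T_{min}=(T_{min})^*=T_{max}$.

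The step I expect to be the main obstacle is the variation-of-parameters lemma in $(i)$, specifically the quantitative claim that square integrability of $f$ near the limit point endpoint forces the coefficient of the non-square-integrable solution $w$ to vanish in the limit; once that is in hand, everything else reduces to bookkeeping with Lagrange's identity, Weyl's alternative, and the linear algebra of alternating forms on a plane.
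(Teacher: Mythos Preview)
The paper does not prove this theorem; it is stated in Section~\ref{s2} as background and the reader is referred to \cite{GLN20} and \cite[Ch.~13]{GNZ24} for proofs. So there is no in-paper argument to compare against, and your proof has to stand on its own.

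Your treatment of $(ii)$ is correct and efficient, including the use of $(i)$ to exclude a global $L^2$ solution in the double limit-point case. Your proof of $(i)$ has the right architecture---localize to $(a,c)$ with $c$ regular, compute the deficiency index there as $1$, and argue on the two-dimensional quotient---but the variation-of-parameters lemma you flag is indeed incomplete as written. From $f=c_f v+d_f w$, $f\in L^2$ near $a$, $w\notin L^2$ near $a$, and $d_f\to\ell$ (which you get from $vk\in L^1$), one cannot conclude $\ell=0$ without also controlling $c_f v$. The obstruction is that $c_f(x)=c_f(x_0)+\int_{x_0}^x wk\,r\,dt$ involves $w\notin L^2$, so $wk$ need not lie in $L^1$ and $c_f$ may blow up. The fix is a short Gronwall bootstrap: on $(a,x_0)$ with $|d_f|\ge|\ell|/2$ one has $|w|\le 2|\ell|^{-1}(|f|+|c_f||v|)$, hence $\sup_{[x,x_0]}|c_f|\le |c_f(x_0)|+2|\ell|^{-1}\big(\|f\|\,\|k\|+\sup_{[x,x_0]}|c_f|\,\|v\|\,\|k\|\big)$ with norms on $(x,x_0)$; shrinking $x_0$ makes $2|\ell|^{-1}\|v\|\,\|k\|<\tfrac12$, so $c_f$ is bounded, $c_f v\in L^2$, and then $d_f w=f-c_f v\in L^2$ forces $w\in L^2$, a contradiction.

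There is also a shorter route that avoids the lemma entirely and stays within the dimension count you already set up. With $D_0:=\{g\in\dom(T_{max}^{(a,c)}):g(c)=g^{[1]}(c)=0\}$ one has $\dom(T_{min}^{(a,c)})\subseteq D_0$ (at the regular endpoint $c$, $W(g,h)(c)=0$ for all $h$ forces $g(c)=g^{[1]}(c)=0$), while the boundary map $g\mapsto(g(c),g^{[1]}(c))$ is onto $\bbC^2$, so $\dom(T_{max}^{(a,c)})/D_0$ is two-dimensional. Since $\dom(T_{max}^{(a,c)})/\dom(T_{min}^{(a,c)})$ is also two-dimensional, $D_0=\dom(T_{min}^{(a,c)})$. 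Now given $f\in\dom(T_{max}^{(a,c)})$, choose $f_0\in\dom(T_{max}^{(a,c)})$ supported near $c$ with the same boundary data at $c$; then $f-f_0\in D_0=\dom(T_{min}^{(a,c)})$ gives $W(f-f_0,g)(a)=0$, while $W(f_0,g)(a)=0$ since $f_0$ vanishes near $a$, whence $W(f,g)(a)=0$.
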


We now assume that $T_{min}$ is bounded from below so that we can characterize boundary conditions utilizing the notions of principal and nonprincipal solutions, a notion originally due to Leighton and Morse \cite{LM36}, Rellich \cite{Re43}, \cite{Re51}, and Hartman and Wintner \cite[Appendix]{HW55} (see also \cite{CGN16}, \cite[Sects.~13.6, 13.9, 13.0]{DS88}, 
\cite[Ch.~XI]{Ha02}, \cite{NZ92}, \cite[Chs.~4, 6--8]{Ze05}). We begin with the following definition.

\begin{definition}
Assume Hypothesis \ref{h2.1}. \\[1mm] 
$(i)$ Fix $c\in (a,b)$ and $\lambda\in\bbR$. Then $\tau - \lam$ is
called {\it nonoscillatory} at $a$ $($resp., $b$$)$, 
if every real-valued solution $u(\lambda,\dott)$ of 
$\tau u = \lambda u$ has finitely many
zeros in $(a,c)$ $($resp., $(c,b)$$)$. Otherwise, $\tau - \lam$ is called {\it oscillatory}
at $a$ $($resp., $b$$)$. \\[1mm] 
$(ii)$ Let $\lambda_0 \in \bbR$. Then $T_{min}$ is called bounded from below by $\lambda_0$, 
and one writes $T_{min} \geq \lambda_0 I$, if 
\begin{equation} 
(u, [T_{min} - \lambda_0 I]u)_{L^2((a,b);rdx)}\geq 0, \quad u \in \dom(T_{min}).
\end{equation}
\end{definition}

The following is a key result in the construction of generalized boundary values. 

\begin{theorem} \label{t2.7} 
Assume Hypothesis \ref{h2.1}. Then the following items $(i)$--$(iii)$ are
equivalent: \\[1mm] 
$(i)$ $T_{min}$ $($and hence any symmetric extension of $T_{min})$
is bounded from below. \\[1mm] 
$(ii)$ There exists a $\nu_0\in\bbR$ such that for all $\lambda < \nu_0$, $\tau - \lam$ is
nonoscillatory at $a$ and $b$. \\[1mm]
$(iii)$ For fixed $c, d \in (a,b)$, $c \leq d$, there exists a $\nu_0\in\bbR$ such that for all
$\lambda<\nu_0$, $\tau u = \lambda u$ has $($real-valued\,$)$ nonvanishing solutions
$u_a(\lambda,\dott) \neq 0$,
$\hatt u_a(\lambda,\dott) \neq 0$ in $(a,c]$, and $($real-valued\,$)$ nonvanishing solutions
$u_b(\lambda,\dott) \neq 0$, $\hatt u_b(\lambda,\dott) \neq 0$ in $[d,b)$, such that 
\begin{align}
&W(\hatt u_a (\lambda,\dott),u_a (\lambda,\dott)) = 1,
\quad u_a (\lambda,x)=\oh(\hatt u_a (\lambda,x))
\text{ as $x\downarrow a$,} \\
&W(\hatt u_b (\lambda,\dott),u_b (\lambda,\dott))\, = 1,
\quad u_b (\lambda,x)\,=\oh(\hatt u_b (\lambda,x))
\text{ as $x\uparrow b$,} \\
&\int_a^c dx \, p(x)^{-1}u_a(\lambda,x)^{-2}=\int_d^b dx \, 
p(x)^{-1}u_b(\lambda,x)^{-2}=\infty, \\
&\int_a^c dx \, p(x)^{-1}{\hatt u_a(\lambda,x)}^{-2}<\infty, \quad 
\int_d^b dx \, p(x)^{-1}{\hatt u_b(\lambda,x)}^{-2}<\infty.
\end{align}
\end{theorem}

\begin{definition}
Assume Hypothesis \ref{h2.1}, suppose that $T_{min}$ is bounded from below, and let 
$\lambda\in\bbR$. Then $u_a(\lambda,\dott)$ $($resp., $u_b(\lambda,\dott)$$)$ in Theorem
\ref{t2.7}\,$(iii)$ is called a {\it principal} $($or {\it minimal}\,$)$
solution of $\tau u=\lambda u$ at $a$ $($resp., $b$$)$. A real-valued solution of $\tau
u=\lambda u$ linearly independent of $u_a(\lambda,\dott)$ $($resp.,
$u_b(\lambda,\dott)$$)$ is called {\it nonprincipal} at $a$ $($resp., $b$$)$.
\end{definition}

Principal and nonprincipal solutions are well-defined due to Lemma \ref{l2.9}\,$(i)$ below. 

\begin{lemma} \label{l2.9} Assume Hypothesis \ref{h2.1} and suppose that $T_{min}$ is bounded 
from below. \\[1mm]
$(i)$ $u_a(\lambda,\dott)$ and $u_b(\lambda,\dott)$ in Theorem
\ref{t2.7}\,$(iii)$ are unique up to $($nonvanishing\,$)$ real constant multiples. Moreover,
$u_a(\lambda,\dott)$ and $u_b(\lambda,\dott)$ are minimal solutions of
$\tau u=\lambda u$ in the sense that 
\begin{align}
u(\lambda,x)^{-1} u_a(\lambda,x)&=\oh(1) \text{ as $x\downarrow a$,} \\ 
u(\lambda,x)^{-1} u_b(\lambda,x)&=\oh(1) \text{ as $x\uparrow b$,}
\end{align}
for any other solution $u(\lambda,\dott)$ of $\tau u=\lambda u$
$($which is nonvanishing near $a$, resp., $b$$)$ with
$W(u_a(\lambda,\dott),u(\lambda,\dott))\neq 0$, respectively, 
$W(u_b(\lambda,\dott),u(\lambda,\dott))\neq 0$. \\[1mm]
$(ii)$ Let $u(\lambda,\dott) \neq 0$ be any nonvanishing solution of $\tau u=\lambda
u$ near $a$ $($resp., $b$$)$. Then for $c_1>a$ $($resp., $c_2<b$$)$
sufficiently close to $a$ $($resp., $b$$)$, 
\begin{align}
& \hatt u_a(\lambda,x)=u(\lambda,x)\int_x^{c_1}dx' \, 
p(x')^{-1}u(\lambda,x')^{-2} \\
& \quad \bigg(\text{resp., }\hatt u_b(\lambda,x)=u(\lambda,x)\int^x_{c_2}dx' \, 
p(x')^{-1}u(\lambda,x')^{-2}\bigg) 
\end{align} 
is a nonprincipal solution of $\tau u=\lambda u$ at $a$ $($resp.,
$b$$)$. If $\hatt u_a(\lambda,\dott)$ $($resp., $\hatt
u_b(\lambda,\dott))$ is a nonprincipal solution of $\tau u=\lambda u$
at $a$ $($resp., $b$$)$ then 
\begin{align}
& u_a(\lambda,x)=\hatt u_a(\lambda,x)\int_a^{x}dx' \, 
p(x')^{-1}{\hatt u_a(\lambda,x')}^{-2} \\
& \quad \bigg(\text{resp., } u_b(\lambda,x)=\hatt u_b(\lambda,x)\int^b_{x}dx' \, 
p(x')^{-1}{\hatt u_b(\lambda,x')}^{-2}\bigg)
\end{align} 
is a principal solution of $\tau u=\lambda u$ at $a$ $($resp., $b$$)$. 
\end{lemma}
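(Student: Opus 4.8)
The plan is to reduce everything to the Liouville reduction–of–order identity: for two solutions $u(\lam,\dott),v(\lam,\dott)$ of $\tau y=\lam y$ on any subinterval where $u(\lam,\dott)$ is nonvanishing, a direct computation using $y^{[1]}=py'$ gives
\[
\frac{d}{dx}\bigg(\frac{v(\lam,x)}{u(\lam,x)}\bigg)=\frac{W\big(u(\lam,\dott),v(\lam,\dott)\big)}{p(x)\,u(\lam,x)^{2}},
\]
so $v(\lam,\dott)/u(\lam,\dott)$ equals a constant plus $W(u(\lam,\dott),v(\lam,\dott))$ times a primitive of $p^{-1}u(\lam,\dott)^{-2}$; equivalently $u(\lam,\dott)\int p(x')^{-1}u(\lam,x')^{-2}\,dx'$ is a second, linearly independent solution whose Wronskian against $u(\lam,\dott)$ is the constant in front of the integral. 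Wherever $u_a(\lam,\dott)$ is defined, $\tau-\lam$ is nonoscillatory at $a$ (Theorem~\ref{t2.7}), hence every solution is nonvanishing on some interval $(a,c_1)$, so this identity is available near $a$; I will argue only at the endpoint $a$, the endpoint $b$ being entirely analogous.

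For the uniqueness in $(i)$ I would take two real solutions $u_1(\lam,\dott),u_2(\lam,\dott)$ that are nonvanishing near $a$ with $\int_a^{c_1}p^{-1}u_i(\lam,\dott)^{-2}=\infty$, and suppose for contradiction that $W:=W(u_1(\lam,\dott),u_2(\lam,\dott))\ne0$. Setting $G_1(x):=\int_x^{c_1}p(x')^{-1}u_1(\lam,x')^{-2}\,dx'\to\infty$ as $x\downarrow a$, the identity gives $u_2(\lam,\dott)/u_1(\lam,\dott)=c_0-WG_1$, so $|u_2(\lam,x)/u_1(\lam,x)|\ge\tfrac12|W|G_1(x)$ for $x$ sufficiently close to $a$; hence $p^{-1}u_2(\lam,\dott)^{-2}\le 4|W|^{-2}p^{-1}u_1(\lam,\dott)^{-2}G_1^{-2}=4|W|^{-2}(1/G_1)'$ near $a$, which is integrable there since $1/G_1\to0$, contradicting $\int_a^{c_1}p^{-1}u_2(\lam,\dott)^{-2}=\infty$. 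Thus $u_1(\lam,\dott)$ and $u_2(\lam,\dott)$ are proportional; existence is Theorem~\ref{t2.7}\,$(iii)$. For the minimality statement in $(i)$: if $u(\lam,\dott)$ is nonvanishing near $a$ with $W(u_a(\lam,\dott),u(\lam,\dott))\ne0$, then by the uniqueness just proved $u(\lam,\dott)$ is not principal, so $\int_a^{c_1}p^{-1}u(\lam,\dott)^{-2}<\infty$, and the identity shows $u_a(\lam,x)/u(\lam,x)$ tends to a finite limit $L$ as $x\downarrow a$; if $L\ne0$ then $p^{-1}u_a(\lam,\dott)^{-2}\le 4L^{-2}p^{-1}u(\lam,\dott)^{-2}$ near $a$, forcing $\int_a^{c_1}p^{-1}u_a(\lam,\dott)^{-2}<\infty$, a contradiction; hence $L=0$, i.e.\ $u(\lam,x)^{-1}u_a(\lam,x)=\oh(1)$ as $x\downarrow a$.

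For $(ii)$ I would verify each displayed formula by substitution. Setting $G(x):=\int_x^{c_1}p^{-1}u(\lam,\dott)^{-2}$, the identity (or a one–line reduction–of–order check) shows $\hatt u_a(\lam,\dott)=u(\lam,\dott)G$ solves $\tau y=\lam y$ with $W(\hatt u_a(\lam,\dott),u(\lam,\dott))=1$; since $p^{-1}\hatt u_a(\lam,\dott)^{-2}=p^{-1}u(\lam,\dott)^{-2}G^{-2}=(1/G)'$ one gets $\int_a^{c'}p^{-1}\hatt u_a(\lam,\dott)^{-2}=1/G(c')-\lim_{x\downarrow a}1/G(x)<\infty$ for $c'<c_1$ (on which $\hatt u_a(\lam,\dott)$ is nonvanishing), so $\hatt u_a(\lam,\dott)$ is not a multiple of $u_a(\lam,\dott)$, hence nonprincipal at $a$. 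Conversely, starting from any nonprincipal $\hatt u_a(\lam,\dott)$ (nonvanishing near $a$ with $\int_a^{c_1}p^{-1}\hatt u_a(\lam,\dott)^{-2}<\infty$) and setting $H(x):=\int_a^x p^{-1}\hatt u_a(\lam,\dott)^{-2}$, the same computation gives that $u_a(\lam,\dott):=\hatt u_a(\lam,\dott)H$ solves $\tau y=\lam y$ with $W(\hatt u_a(\lam,\dott),u_a(\lam,\dott))=1$ and $u_a(\lam,x)=\oh(\hatt u_a(\lam,x))$ as $x\downarrow a$; moreover $p^{-1}u_a(\lam,\dott)^{-2}=p^{-1}\hatt u_a(\lam,\dott)^{-2}H^{-2}=-(1/H)'$ with $H(x)\downarrow0$ as $x\downarrow a$, whence $\int_a^{c_1}p^{-1}u_a(\lam,\dott)^{-2}=\infty$; by the uniqueness from $(i)$ this $u_a(\lam,\dott)$ is (a constant multiple of) the principal solution.

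The one genuinely delicate point is the asymptotic bookkeeping near the endpoint: one must control the additive constant $c_0$ (respectively the limit $L$, respectively the boundary term $1/H(c)$) against the divergent primitive $G_1$ (respectively $G$, $H$) uniformly as $x\downarrow a$ — concretely, the elementary estimate $|c_0-WG_1(x)|\ge\tfrac12|W|G_1(x)$ for $x$ near $a$ and the integrability of $(1/G_1)'$ and $(1/G)'$ near $a$ versus the non‑integrability of $(1/H)'$ there. Once these are in place, the rest is routine manipulation of Wronskians and primitives, together with the uniqueness in $(i)$ to identify the candidate solutions as the principal ones.
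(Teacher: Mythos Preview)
The paper does not supply its own proof of this lemma: Section~\ref{s2} is explicitly a survey section, and Lemma~\ref{l2.9} is quoted without proof from \cite{GLN20} and \cite[Ch.~13]{GNZ24}. Your argument is correct and is precisely the standard one found in those references (and in Hartman \cite{Ha02}): everything is driven by the reduction-of-order identity $(v/u)'=W(u,v)/(pu^2)$ together with the dichotomy between divergent and convergent primitives of $p^{-1}u(\lam,\dott)^{-2}$ near the endpoint, and the key observation that $p^{-1}u^{-2}G^{-2}=(1/G)'$ converts the comparison of two such primitives into an integrable boundary term.
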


Given these oscillation theoretic preparations, one can now describe all self-adjoint extensions of $T_{min}$, a recent result from \cite{GLN20}.

\begin{theorem}[{\cite[Thm. 4.5]{GLN20}}] \label{t2.10}
Assume Hypothesis \ref{h2.1} and that $\tau$ is in the limit circle case at $a$ and $b$ $($i.e., $\tau$ is quasi-regular 
on $(a,b)$$)$. In addition, assume that $T_{min} \geq \lambda_0 I$ for some $\lambda_0 \in \bbR$, and denote by 
$u_a(\lambda_0, \dott)$ and $\hatt u_a(\lambda_0, \dott)$ $($resp., $u_b(\lambda_0, \dott)$ and 
$\hatt u_b(\lambda_0, \dott)$$)$ principal and nonprincipal solutions of $\tau u = \lambda_0 u$ at $a$ 
$($resp., $b$$)$, satisfying
\begin{equation}
W(\hatt u_a(\lambda_0,\dott), u_a(\lambda_0,\dott)) = W(\hatt u_b(\lambda_0,\dott), u_b(\lambda_0,\dott)) = 1.
\end{equation}  
For $g\in\dom(T_{max})$ we introduce the generalized boundary values
\begin{align}
\begin{split} \label{2.20}
\wti g(a) &= - W(u_a(\lambda_0,\dott), g)(a)  = \lim_{x \downarrow a} \f{g(x)}{\hatt u_a(\lambda_0,x)},    \\
\wti g(b) &=  - W(u_b(\lambda_0,\dott), g)(b)   = \lim_{x \uparrow b} \f{g(x)}{\hatt u_b(\lambda_0,x)},    
\end{split} \\
\begin{split} \label{2.21}
{\wti g}^{\, \prime}(a) &= W(\hatt u_a(\lambda_0,\dott), g)(a)   = \lim_{x \downarrow a} \f{g(x) - \wti g(a) \hatt u_a(\lambda_0,x)}{u_a(\lambda_0,x)},    \\ 
{\wti g}^{\, \prime}(b) &=  W(\hatt u_b(\lambda_0,\dott), g)(b)  = \lim_{x \uparrow b} \f{g(x) - \wti g(b) \hatt u_b(\lambda_0,x)}{u_b(\lambda_0,x)}.   
\end{split} 
\end{align}
Then the following items $(i)$--$(iii)$ hold: \\[1mm] 
$(i)$ All self-adjoint extensions $T_{\al,\be}$ of $T_{min}$ with separated boundary conditions are of the form
\begin{align}
& T_{\al,\be} f = \tau f, \quad \al,\be\in(0,\pi],   \notag\\
& f \in \dom(T_{\al,\be})=\big\{g\in\dom(T_{max}) \, \big| \, \wti g(a)\cos(\al)+ \wti g^{\, \prime}(a)\sin(\al)=0;   \label{2.25} \\ 
& \hspace*{5.5cm} \, \wti g(b)\cos(\be)- \wti g^{\, \prime}(b)\sin(\be) = 0 \big\}.    \notag
\end{align}
Special cases: $\al=\pi$ $($i.e., $\wti g(a)=0$$)$ is called the Dirichlet-type boundary condition at $a$; $\al=\pi/2$,  
$($i.e., $\wti g^{\, \prime}(a)=0$$)$ is called the Neumann-type boundary condition at $a$ $($analogous facts hold at the 
endpoint $b$$)$. \\[1mm]
$(ii)$ All self-adjoint extensions $T_{\eta,R}$ of $T_{min}$ with coupled boundary conditions are of the form
\begin{align}\label{2.26}
\begin{split} 
& T_{\eta,R} f = \tau f,    \\
& f \in \dom(T_{\eta,R})=\bigg\{g\in\dom(T_{max}) \, \bigg| \begin{pmatrix} \wti g(b)\\ \wti g^{\, \prime}(b)\end{pmatrix} 
= e^{i\eta}R \begin{pmatrix}
\wti g(a)\\ \wti g^{\, \prime}(a)\end{pmatrix} \bigg\},
\end{split}
\end{align}
where $\eta\in[0,\pi)$, and $R$ is a real $2\times2$ matrix with $\det(R)=1$ 
$($i.e., $R \in SL(2,\bbR)$$)$.
Special cases: $\eta = 0$, $R=I_2$ $($i.e., $\wti g(b)=\wti g(a)$, $\wti g^{\, \prime}(b)=\wti g^{\, \prime}(a)$$)$ are called {\it periodic boundary conditions}; 
similarly, $\eta = 0$, $R= - I_2$ $($i.e., $\wti g(b)=-\wti g(a)$, $\wti g^{\, \prime}(b)=-\wti g^{\, \prime}(a)$$)$ are called {\it antiperiodic boundary conditions}. 
\\[1mm]  
$(iii)$ Every self-adjoint extension of $T_{min}$ is either of type $(i)$ or of type 
$(ii)$.
\end{theorem}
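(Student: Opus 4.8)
The plan is to reduce the statement to the Glazman--Krein--Naimark (GKN) parametrization of self-adjoint extensions of the densely defined closed symmetric operator $T_{min}$, whose deficiency indices are $n_\pm(T_{min})=2$ by the limit circle hypothesis. \textbf{Setup.} First I would record the second Green identity: for $f,g\in\dom(T_{max})$ one has $\tfrac{d}{dx}W(f,\ol g) = r\,[(\tau f)\ol g - f\,\overline{\tau g}]\in L^1((a,b);dx)$, so the one-sided limits $W(f,\ol g)(a^+)$, $W(f,\ol g)(b^-)$ exist and
\begin{align*}
(T_{max}f,g)_{\Lr}-(f,T_{max}g)_{\Lr} = W(f,\ol g)(b^-)-W(f,\ol g)(a^+).
\end{align*}
Since $\tau$ is limit circle at both endpoints, the fixed solutions $u_a(\lambda_0,\dott),\hatt u_a(\lambda_0,\dott)$ and $u_b(\lambda_0,\dott),\hatt u_b(\lambda_0,\dott)$ all lie in $\dom(T_{max})$, so the quantities in \eqref{2.20}, \eqref{2.21} are well defined, and the asserted equality of their Wronskian forms with the limit forms is Lemma \ref{l2.9} together with the normalization $W(\hatt u_a,u_a)=1$ and $u_a=\oh(\hatt u_a)$.

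\textbf{Boundary triple.} Expanding $f$ and $\ol g$ near $a$ against the (real) fundamental system $\{\hatt u_a(\lambda_0,\dott),u_a(\lambda_0,\dott)\}$ and using bilinearity of the Wronskian and $W(\hatt u_a,u_a)=1$ yields the bracket formula $W(f,\ol g)(a^+)=\wti f(a)\,\overline{\wti g^{\,\prime}(a)}-\wti f^{\,\prime}(a)\,\overline{\wti g(a)}$, and likewise at $b$. Consequently the map $\Gamma\colon g\mapsto(\wti g(a),\wti g^{\,\prime}(a),\wti g(b),\wti g^{\,\prime}(b))\in\bbC^4$ has kernel exactly $\dom(T_{min})$: taking $h\in\{u_a,\hatt u_a,u_b,\hatt u_b\}$ in the defining condition $W(h,g)(a)=0=W(h,g)(b)$ forces all four values to vanish, and conversely the bracket formula makes every such Wronskian vanish. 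Hence $\Gamma$ descends to an injection of $\dom(T_{max})/\dom(T_{min})$ into $\bbC^4$, which is an isomorphism by the dimension count, and it carries the boundary form into $\Omega(\Gamma f,\Gamma g):=\Omega_b(\Gamma f,\Gamma g)-\Omega_a(\Gamma f,\Gamma g)$ with $\Omega_c(\xi,\eta)=\xi_1\overline{\eta_2}-\xi_2\overline{\eta_1}$. By GKN, the self-adjoint extensions of $T_{min}$ are exactly the restrictions of $T_{max}$ to $\Gamma^{-1}(L)$ as $L$ ranges over the $2$-dimensional $\Omega$-isotropic (equivalently, Lagrangian) subspaces of $\bbC^4$.

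\textbf{Classification and the obstacle.} It remains to classify Lagrangian $L\subset\bbC^2_a\oplus\bbC^2_b$. If $L\cap(\bbC^2_a\oplus\{0\})\neq\{0\}$, a spanning vector $(v_a,0)$ is $\Omega_a$-isotropic, and isotropy of $L$ forces the $\bbC^2_a$-projection of $L$ into $v_a^{\perp_{\Omega_a}}=\bbC v_a$, whence $L=\bbC(v_a,0)\oplus\bbC(0,v_b)$ with $v_b$ $\Omega_b$-isotropic; an isotropic line in $(\bbC^2,\Omega_c)$ is spanned by a real vector (up to a phase), and $\al\mapsto\{z\in\bbR^2:z_1\cos\al+z_2\sin\al=0\}$ is a bijection of $(0,\pi]$ onto the set of such lines, giving \eqref{2.25} (the sign asymmetry at $b$ and the range $(0,\pi]$ over $[0,\pi)$ are cosmetic, arranged so that Dirichlet-type is $\al=\pi$); the case $L\cap(\{0\}\oplus\bbC^2_b)\neq\{0\}$ is symmetric. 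Otherwise both projections $L\to\bbC^2_a$, $L\to\bbC^2_b$ are injective, hence isomorphisms, so $L$ is the graph of $M\in GL(2,\bbC)$; isotropy reads $M^\dagger J M=J$ with $J$ the Gram matrix of $\Omega_a$, forcing $\lvert\det M\rvert=1$, and writing $M=e^{i\eta}N$ with $\det N=1$ one gets $N^\dagger J N=J=N^T J N$ (the latter for any $N\in SL(2,\bbC)$), hence $N$ real, $N\in SL(2,\bbR)$, with $\eta$ unique in $[0,\pi)$ since $(-N,\eta+\pi)$ gives the same $M$, which is \eqref{2.26}; part $(iii)$ is the observation that these cases are exhaustive. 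I expect the only real work to sit in this last step: confirming that the ``mixed'' subspaces meeting exactly one endpoint factor in a line are genuinely of separated type, and pinning down the exact parameter ranges, the $SL(2,\bbR)$-plus-phase normal form of $M$, and the sign conventions matching \eqref{2.25}--\eqref{2.26}. Everything preceding it is soft once the bracket formula and the surjectivity of $\Gamma$ are in place.
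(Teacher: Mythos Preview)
The paper does not supply its own proof of this theorem: it is quoted verbatim as \cite[Thm.~4.5]{GLN20} in the survey Section~\ref{s2}, so there is no in-paper argument to compare against. Your sketch via the boundary form, the map $\Gamma$ into $\bbC^4$, and the classification of Lagrangian planes in $\bbC^2_a\oplus\bbC^2_b$ is correct and is precisely the standard GKN/boundary-triple route one expects \cite{GLN20} to take; the case split (separated when $L$ meets an endpoint factor, coupled when both projections are isomorphisms) and the $SL(2,\bbR)$-plus-phase normal form via $N^\dagger J N = J = N^T J N$ are handled cleanly.

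Two minor remarks. First, your appeal to ``Lemma~\ref{l2.9}'' for the equivalence of the Wronskian and quotient forms in \eqref{2.20}--\eqref{2.21} is a little thin: what is actually used is that for $g\in\dom(T_{max})$ the function $g - \wti g(a)\hatt u_a$ has vanishing Wronskian with $u_a$ at $a$, hence is asymptotically a multiple of the principal solution; this is straightforward but worth one line. Second, in the separated case you should note that if $L\cap(\bbC^2_a\oplus\{0\})\neq\{0\}$ then automatically $L\cap(\{0\}\oplus\bbC^2_b)\neq\{0\}$ as well (your argument produces the vector $(0,v_b)$), so the two ``symmetric'' cases are in fact the same case; your ``otherwise'' then cleanly means both intersections are trivial.
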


\begin{remark}
$(i)$ We point out that our choice of identifying the Dirichlet-type boundary condition at $a$ with the choice $\alpha=\pi$ rather than the typical choice $0$ stems from our characterization of nonnegative extensions in Section \ref{s3}. In fact, this seems to be the natural choice by our results. \\[1mm]
$(ii)$ If $\tau$ is in the limit point case at one endpoint, say, at the endpoint $b$, one omits the corresponding boundary condition involving $\b \in (0, \pi]$ at $b$ in \eqref{2.25} to obtain all self-adjoint extensions $T_{\a}$ of 
$T_{min}$, indexed by $\a \in (0, \pi]$. In the case where $\tau$ is in the limit point case at both endpoints, all boundary values and boundary conditions become superfluous as in this case $T_{min} = T_{max}$ is self-adjoint. \\[1mm] 
$(iii)$ An explicit calculation demonstrates that for $g, h \in \dom(T_{max})$,
\begin{equation}
\wti g(d) \wti h^{\, \prime}(d) - \wti g^{\, \prime}(d) \wti h(d) = W(g,h)(d), \quad d \in \{a,b\},   \label{2.27}
\end{equation} 
interpreted in the sense that either side in \eqref{2.27} has a finite limit as $d \downarrow a$ and $d \uparrow b$. 
Of course, for \eqref{2.27} to hold at $d \in \{a,b\}$, it suffices that $g$ and $h$ lie locally in $\dom(T_{max})$ near $x=d$.\\[1mm]
$(iv)$ While the principal solution at an endpoint is unique up to constant multiples (which we will ignore), nonprincipal solutions differ by additive constant multiples of the principal solution. As a result, if 
\begin{align}
\begin{split} 
& \hatt u_a (\lambda_0, \dott) \longrightarrow \hatt u_a (\lambda_0, \dott) + C u_a(\lambda_0,\dott), \quad C \in \bbR,    \\
& \quad \text{then } \, \wti g(a) \longrightarrow \wti g(a), 
\quad {\wti g}^{\, \prime}(a) \longrightarrow {\wti g}^{\, \prime}(a) - C \wti g(a),    
\end{split} 
\end{align}
and analogously at the endpoint $b$, facts that will be revisited in Remark \ref{Rem:3.3} $(ii)$. Hence, generalized boundary values ${\wti g}^{\, \prime}(d)$ at the endpoint 
$d \in \{a,b\}$ depend on the choice of nonprincipal solution $\hatt u_{d}(\lambda_0, \dott)$ of $\tau u = \lambda_0 u$ at $d$. However, the Friedrichs boundary conditions $\wti g(a) = 0 = \wti g(b)$ are independent of the choice of 
nonprincipal solution (see \eqref{2.30}). Furthermore, if the problem considered is regular (i.e., the integrability assumptions in Hypothesis \ref{h2.1} extend to the endpoints and the interval is finite), the principal and nonprincipal solutions can be chosen to agree exactly with the regular boundary conditions.
\end{remark} 

As a special case, we recall that the domain of $T_{min}$ is now characterized by 
\begin{equation}
\dom(T_{min})= \big\{g\in\dom(T_{max})  \, \big| \, \wti g(a) = {\wti g}^{\, \prime}(a) = \wti g(b) = {\wti g}^{\, \prime}(b) = 0\big\}.
\end{equation}
Similarly, the Friedrichs extension $T_F$ of $T_{min}$ now permits a particularly simple characterization in terms of the generalized boundary values $\wti g(a), \wti g(b)$ as derived by Kalf  \cite{Ka78} and subsequently by Niessen and Zettl \cite{NZ92} (see also \cite{Re51}, \cite{Ro85} and the extensive literature cited in \cite{GLN20}, 
\cite[Ch.~13]{GNZ24}):
\begin{align}\label{2.30}
T_F f = \tau f, \quad f \in \dom(T_F)= \big\{g\in\dom(T_{max})  \, \big| \, \wti g(a) = \wti g(b) = 0\big\}.
\end{align}
Finally, we also mention the special case of the Krein--von Neumann extension in the case where 
$T_{min}$ is strictly positive, a result recently derived in \cite{FGKLNS21}.

\begin{theorem}[{\cite[Thm. 3.5]{FGKLNS21}}]\label{t2.12}
In addition to Hypothesis \ref{h2.1}, suppose $T_{min} \geq \varepsilon I$ for some 
$\varepsilon > 0$. Then the following items $(i)$ and $(ii)$ hold: \\[1mm]
$(i)$ Assume that $n_{\pm}(T_{min}) = 1$ and denote the principal solutions of $\tau u = 0$ at $a$ and $b$ by $u_a(0,\dott)$ and $u_b(0,\dott)$, respectively. If $\tau$ is in the limit circle case at $a$ and in the limit point case at $b$, then the Krein--von Neumann extension $T_{\a_K}$ of $T_{min}$ is given by 
\begin{align}\label{2.31}
& T_{\a_K}f = \tau f,   \no \\
& f \in \dom(T_{\a_K})=\big\{g\in\dom(T_{max}) \, \big| \, \sin(\a_K) {\wti g}^{\, \prime}(a) 
+ \cos(\a_K) \wti g(a) = 0\big\},     \\
& \cot(\a_K) = - \wti u_b^{\, \prime}(0,a) / \wti u_b(0,a), \quad \a_K \in (0,\pi).   \no 
\end{align}
Similarly, if $\tau$ is in the limit circle case at $b$ and in the limit point case at $a$, then the Krein--von Neumann extension $T_{\b_K}$ of $T_{min}$ is given by 
\begin{align}
& T_{\b_K}f = \tau f,   \no \\
& f \in \dom(T_{\b_K})=\big\{g\in\dom(T_{max}) \, \big| \, \sin(\b_K) {\wti g}^{\, \prime}(b) 
- \cos(\b_K) \wti g(b) = 0\big\},     \\
& \cot(\b_K) = \wti u_a^{\, \prime}(0,b) / \wti u_a(0,b), \quad \b_K \in (0,\pi).   \no 
\end{align}
$(ii)$ Assume that $n_{\pm}(T_{min}) = 2$, that is, $\tau$ is in the limit circle case at $a$ and $b$. Then
the Krein--von Neumann extension $T_{0,R_K}$ of $T_{min}$ is given by 
\begin{align}
\begin{split} 
& T_{0,R_K} f = \tau f,    \\
& f \in \dom(T_{0,R_K})=\bigg\{g\in\dom(T_{max}) \, \bigg| \begin{pmatrix} \wti g(b) 
\\ {\wti g}^{\, \prime}(b) \end{pmatrix} = R_K \begin{pmatrix}
\wti g(a) \\ {\wti g}^{\, \prime}(a) \end{pmatrix} \bigg\},     
\end{split}   
\end{align}
where 
\begin{align}\label{2.31a}
R_K=\begin{pmatrix} \wti{\hatt u}_{a}(0,b) & \wti u_{a}(0,b)\\
\wti{\hatt u}_{a}^{\, \prime}(0,b) & \wti u_{a}^{\, \prime}(0,b)
\end{pmatrix}. 
\end{align}
\end{theorem}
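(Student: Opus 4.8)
The plan is to realize each of the domains in the theorem as $\dom(T_{min})\dotplus\ker(T_{max})$, which is the characterizing property of the Krein--von Neumann extension of a strictly positive symmetric operator. Recall from the abstract theory (see, e.g., \cite{AS80} and the references therein) that for a closed, densely defined symmetric operator $S\ge\varepsilon I$ with $\varepsilon>0$, its Krein--von Neumann extension $S_K$ is the unique nonnegative self-adjoint extension with $\dom(S_K)=\dom(S)\dotplus\ker(S^*)$ and $S_K f=S^*f$; the sum is algebraically direct since $\ker(S)=\{0\}$, so no nonzero element of $\ker(S^*)$ lies in $\dom(S)$ (it would be a null vector of $S$). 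Applying this with $S=T_{min}$, $S^*=T_{max}$ and using
\begin{equation*}
\dom(T_{min})=\big\{g\in\dom(T_{max})\,\big|\,\wti g(a)={\wti g}^{\,\prime}(a)=\wti g(b)={\wti g}^{\,\prime}(b)=0\big\}
\end{equation*}
(with the evident deletions at an endpoint in the limit point case), the proof reduces to: (a) computing $\ker(T_{max})$; and (b) reading off the boundary conditions obeyed by $\dom(T_{min})\dotplus\ker(T_{max})$.

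For (a): since $0<\varepsilon$ and $T_{min}\ge\varepsilon I$, the equation $\tau u=0$ is nonoscillatory at both endpoints, so principal solutions $u_a(0,\dott)$ and $u_b(0,\dott)$ exist (Theorem \ref{t2.7}, Lemma \ref{l2.9}). In case $(ii)$ ($\tau$ limit circle at both endpoints) every solution of $\tau u=0$ lies in $\Lr$, whence $\ker(T_{max})=\linspan\{u_a(0,\dott),\hatt u_a(0,\dott)\}$ is the full two--dimensional solution space of $\tau u=0$. In case $(i)$ ($\tau$ limit circle at $a$, limit point at $b$) the principal solution $u_b(0,\dott)$ is the recessive (Weyl) solution at $b$, hence (up to constant multiples) the only solution of $\tau u=0$ lying in $\Lr$ near $b$, while a nonprincipal solution is not; being automatically in $\Lr$ near $a$ by the limit circle hypothesis, $u_b(0,\dott)$ spans the one--dimensional space $\ker(T_{max})$. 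The reversed configuration is symmetric, with $\ker(T_{max})=\linspan\{u_a(0,\dott)\}$.

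For (b): evaluating the Wronskian formulas \eqref{2.20}--\eqref{2.21} on $u_a(0,\dott)$, $\hatt u_a(0,\dott)$ and using $W(\hatt u_a(0,\dott),u_a(0,\dott))=1$ gives the boundary data at $a$,
\begin{equation*}
\big(\wti u_a(0,a),\ \wti u_a^{\,\prime}(0,a)\big)=(0,1),\qquad \big(\wti{\hatt u}_a(0,a),\ \wti{\hatt u}_a^{\,\prime}(0,a)\big)=(1,0).
\end{equation*}
In case $(ii)$, a general element $g=g_0+c_1u_a(0,\dott)+c_2\hatt u_a(0,\dott)$ with $g_0\in\dom(T_{min})$ then has $\wti g(a)=c_2$, ${\wti g}^{\,\prime}(a)=c_1$, and reading off $(\wti g(b),{\wti g}^{\,\prime}(b))$ from the $b$--boundary data of $u_a(0,\dott),\hatt u_a(0,\dott)$ yields exactly $\big(\wti g(b),{\wti g}^{\,\prime}(b)\big)^{\top}=R_K\big(\wti g(a),{\wti g}^{\,\prime}(a)\big)^{\top}$ with $R_K$ as in \eqref{2.31a}; the reverse inclusion follows by subtracting the matching element of $\ker(T_{max})$ from any $g\in\dom(T_{max})$ obeying this relation. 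That $\det(R_K)=1$ follows from identity \eqref{2.27} at $b$ together with $W(\hatt u_a(0,\dott),u_a(0,\dott))\equiv1$, and $R_K$ is real (real-valued solutions), so $T_{0,R_K}$ has the stated form \eqref{2.26} with $\eta=0$ (self-adjointness being part of Theorem \ref{t2.10}). In case $(i)$, $\dom(T_{min})\dotplus\ker(T_{max})=\{g_0+c\,u_b(0,\dott):g_0\in\dom(T_{min}),\,c\in\bbC\}$ consists exactly of those $g\in\dom(T_{max})$ for which $(\wti g(a),{\wti g}^{\,\prime}(a))$ is proportional to $(\wti u_b(0,a),\wti u_b^{\,\prime}(0,a))$, i.e.\ $\wti u_b^{\,\prime}(0,a)\wti g(a)-\wti u_b(0,a){\wti g}^{\,\prime}(a)=0$, which rearranges to $\sin(\alpha_K){\wti g}^{\,\prime}(a)+\cos(\alpha_K)\wti g(a)=0$ with $\cot(\alpha_K)=-\wti u_b^{\,\prime}(0,a)/\wti u_b(0,a)$; the endpoint-$b$ statement is the mirror image.

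The step requiring genuine care — the natural main obstacle — is showing $\alpha_K,\beta_K\in(0,\pi)$, equivalently $\wti u_b(0,a)\ne0$ (resp.\ $\wti u_a(0,b)\ne0$), i.e.\ that the Krein extension here is genuinely distinct from the Friedrichs extension. Strict positivity enters a second time: were $\wti u_b(0,a)=0$, then $u_b(0,\dott)\in\dom(T_F)$ by \eqref{2.30}, so $0$ would be an eigenvalue of $T_F$, contradicting $T_F\ge\varepsilon I$ (the Friedrichs extension inherits the lower bound of $T_{min}$); hence $\sin(\alpha_K)\ne0$. Everything else — the nonoscillation/recessive-solution input of (a), and the fact that the operator produced is indeed the smallest nonnegative self-adjoint extension of $T_{min}$ — is either recorded Weyl--Titchmarsh theory (cf.\ \cite{GLN20}, \cite[Ch.~13]{GNZ24}) or part of the abstract Krein--von Neumann machinery, so no obstruction beyond careful bookkeeping of normalizations and signs remains.
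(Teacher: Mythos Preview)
Your proof is correct. Note, however, that the paper does not actually supply its own proof of this statement: Theorem~\ref{t2.12} is quoted from \cite[Thm.~3.5]{FGKLNS21} as background in the survey Section~\ref{s2}. The paper does recover it indirectly, via the special case $B=0$, $\cW=\cN_0$ of Theorem~\ref{t3.1} (see Remark~\ref{r3.5}\,$(ii)$ for part $(ii)$, and Remark~\ref{Rem:3.3}\,$(iii)$--$(iv)$ with $B=0$ for part $(i)$). Your argument is precisely that special case carried out directly: the abstract identity $\dom(S_K)=\dom(S)\dotplus\ker(S^*)$ is exactly what \eqref{3.1} reduces to when $B=0$ on $\cW=\cN_0$, and your boundary-value bookkeeping matches the computations in Section~\ref{subgen} and equation \eqref{3.19} specialized appropriately. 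So the two routes coincide in substance; yours is simply the shortcut that avoids parametrizing all nonnegative extensions before setting $B=0$.
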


\section{Nonnegative extensions}\label{s3}
We now study nonnegative self-adjoint extensions of $T_{min}$ whenever $T_{min}\geq \varepsilon I$ for some $\varepsilon>0$. We will find a description of all nonnegative self-adjoint realizations of $\tau$ in terms of generalized boundary conditions. To this end, let us introduce the following partial order on nonnegative self-adjoint operators:
\begin{definition}
Let $A$ and $B$ be two nonnegative self-adjoint operators in the Hilbert spaces $\mathcal{W}_A$ and $\mathcal{W}_B$, respectively. Assume that there is a Hilbert space $\mathcal{H}$ such that $\mathcal{W}_A, \mathcal{W}_B\subseteq\mathcal{H}$. 
Then the partial order $A\leq B$ is defined as
\begin{equation}
A\leq B\: \Leftrightarrow \: \dom(A^{1/2})\supseteq \dom(B^{1/2})\:\mbox{ and }\: \|A^{1/2}f\|_\cH\leq\|B^{1/2}f\|_\cH\quad\forall f\in\dom(B^{1/2}).
\end{equation}
Here, $A^{1/2}$ and $B^{1/2}$ denote the unique nonnegative self-adjoint square-roots of $A$ in $\mathcal{W}_A$ and $B$ in $\mathcal{W}_B$, respectively.
\end{definition}
\begin{remark}
The reason we state this definition using the Hilbert spaces $\mathcal{W}_A$ and $\mathcal{W}_B$ is that it  allows us to define this partial order also in the case that the operator $B$ is only defined in a subspace of $\overline{\dom(A)}$.
\end{remark}

It is a celebrated result of M.\ G.\  Krein \cite{K47} that there exists two distinguished nonnegative self-adjoint extensions, $T_F$ and $T_K$ (the extensions previously referred to as the ``Friedrichs" and ``Krein-von Neumann" extension, respectively) such that every other nonnegative self-adjoint extension $\hatt{T}$ of $T$ satisfies $T_K\leq \hatt T\leq T_F$. For the strictly positive case $T_{min}\geq \varepsilon I$, all other nonnegative self-adjoint extensions $\hatt{T}$ are parametrized by closed subspaces $\mathcal{W}\subseteq\ker(T_{max})$ and nonnegative auxiliary operators $B$ in $\mathcal{W}$ \cite{B56, F75, V52}. See also \cite{G68} and the more recent \cite{F19}, where sectorial extensions of $T_{min}$ are parametrized using sectorial auxiliary operators $B$.

We now recall the following general result:

\begin{theorem}[{\cite[Prop.1.1]{F17}, \cite[Thm. D.3.13]{GNZ24}}]\lb{t3.1}
Suppose that $T$ is a densely defined, symmetric, closed operator
with nonzero deficiency indices in $\cH$ that satisfies
$T \geq \varepsilon I_{\cH}$ for some $\varepsilon >0$.
Then there exists a one-to-one correspondence between nonnegative self-adjoint operators
$0 \leq B:\dom(B)\subseteq \cW\to \cW$, $\ol{\dom(B)}=\cW$, where $\cW$
is a closed subspace of $\cN_0 :=\ker(T^*)$, and nonnegative self-adjoint
extensions $T_{B,\cW}\geq 0$ of $T$. More specifically, $T_F$ is invertible,
$T_F\geq \varepsilon I_{\cH}$,
and
\begin{align}
& \dom(T_{B,\cW}) = \no\\&\qquad\qquad\dom(T)\dot{+}\big\{((T_F)^{-1}B+I)\eta \,|\,\eta\in\dom(B)\big\} \dot{+}\big\{(T_F)^{-1}v\,|\, v \in \cN_0 \cap \cW^{\bot}\big\}\no\\
& \qquad T_{B,\cW} = T^*|_{\dom(T_{B,\cW})},       \lb{3.1}
\end{align}
where $\cW^{\bot}$ denotes the orthogonal complement of $\cW$ in $\cH$. In
addition,
\begin{align}
&\dom\big((T_{B,\cW})^{1/2}\big) = \dom\big((T_F)^{1/2}\big) \dotplus
\dom\big(B^{1/2}\big),   \\
&\big\|(T_{B,\cW})^{1/2}(u+g)\big\|_{\cH}^2 =\big\|(T_F)^{1/2} u\big\|_{\cH}^2
+ \big\|B^{1/2} g\big\|_{\cH}^2, \\
& \hspace*{2.3cm} u \in \dom\big((T_F)^{1/2}\big), \; g \in \dom\big(B^{1/2}\big),  \no
\end{align}
implying $\ker(T_{B,\cW})=\ker(B).$
Moreover,
\begin{equation}
B \leq \wti B \, \text{ implies } \, T_{B,\cW} \leq T_{\wti B,\wti \cW},
\end{equation}
where
\begin{align}
\begin{split}
& B\colon \dom(B) \subseteq \cW \to \cW, \quad
 \wti B\colon \dom\big(\wti B\big) \subseteq \wti \cW \to \wti \cW,   \\
& \ol{\dom\big(\wti B\big)} = \wti\cW \subseteq \cW = \ol{\dom(B)}.
\end{split}
\end{align}

In the above scheme, the Krein--von Neumann extension $T_K$
of $T$ corresponds to the choice $\cW=\cN_0$ and $B=0$ $($with
$\dom(B)=\dom\big(B^{1/2}\big)=\cN_0=\ker (T^*)$$)$, and the Friedrichs extension $T_F$ corresponds to $\dom(B)=\{0\}$ $($i.e., formally,
$B = \infty$$)$.
\end{theorem}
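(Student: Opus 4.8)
The plan is to run the Birman--Kre\u\i n--Vishik construction at the level of quadratic forms and then convert the resulting form identity into the explicit operator description \eqref{3.1} by means of the bounded inverse $(T_F)^{-1}$. Throughout, $\mathfrak{t}_F$ denotes the form of $T_F$ (the closure of $u\mapsto(Tu,u)$), $\cH_F:=\dom\big((T_F)^{1/2}\big)$ its form domain, and $\cN_0=\ker(T^*)$.

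\emph{Step 1 (consequences of $T\ge\varepsilon I_\cH$).} First I would record that $T_F$ exists, that $T_F\ge\varepsilon I_\cH$ (the Friedrichs extension preserves the lower bound), so $0\in\rho(T_F)$ and $(T_F)^{-1}\in\cB(\cH)$ is bounded, nonnegative and self-adjoint; that $\cH=\ran(T)\oplus\cN_0$ orthogonally (because $T$ is bounded below, so $\ran(T)$ is closed and $=\cN_0^{\perp}$); that $\dom(T^*)=\dom(T_F)\dotplus\cN_0$ via $f\mapsto\big(f-(T_F)^{-1}T^*f,\ (T_F)^{-1}T^*f\big)$, directness from $\dom(T_F)\cap\cN_0=\{0\}$; and the identity $\cH_F\cap\cN_0=\{0\}$, obtained by approximating $g\in\cH_F\cap\cN_0$ by $w_n\in\dom(T)$ in the $\mathfrak{t}_F$-norm and using $(Tw_n,g)=(w_n,T^*g)=0$ to force $\mathfrak{t}_F(g)=\lim(Tw_n,g)=0$, hence $g=0$.

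\emph{Step 2 (the form-level bijection --- the crux).} Given a nonnegative self-adjoint extension $\hat T$ of $T$: by Kre\u\i n's theorem $\hat T\le T_F$ in the form order, so $\cH_{\hat T}:=\dom(\hat T^{1/2})\supseteq\cH_F$, and in fact $\mathfrak{t}_{\hat T}|_{\cH_F}=\mathfrak{t}_F$, since both forms equal $(T\,\cdot\,,\cdot)$ on the $\mathfrak{t}_F$-dense (hence $\mathfrak{t}_{\hat T}$-dense, as $\mathfrak{t}_{\hat T}\le\mathfrak{t}_F$) subspace $\dom(T)\subseteq\cH_F$. The decisive point is that the cross term vanishes: for $g\in\cN_0\cap\cH_{\hat T}$ and $h\in\dom(T)$ one has $\mathfrak{t}_{\hat T}(g,h)=(g,\hat Th)=(g,Th)=0$ because $(Th,g)=(h,T^*g)=0$, and this extends to all $h\in\cH_F$ by density. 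Using $\dom(T^*)=\dom(T_F)\dotplus\cN_0$ one checks that every $f\in\dom(\hat T)$ decomposes as $f=\big(w_0+(T_F)^{-1}g_0\big)+g_1$ with $w_0\in\dom(T)$, $g_0\in\cN_0$ (from $\hat Tf=Tw_0+g_0$) and $g_1\in\cN_0\cap\cH_{\hat T}$, so $\dom(\hat T)\subseteq\dom(T_F)\dotplus(\cN_0\cap\cH_{\hat T})$. Since $\dom(\hat T)$ is a form core and the cross term vanishes, a completion argument --- controlling the $\cH_F$-component via $\mathfrak{t}_F(u)\ge\varepsilon\|u\|^2$ and invoking closedness of $\mathfrak{t}_F$ and of $\mathfrak{b}:=\mathfrak{t}_{\hat T}|_{\cN_0\cap\cH_{\hat T}}$ --- upgrades this to
\[
\cH_{\hat T}=\cH_F\dotplus(\cN_0\cap\cH_{\hat T}),\qquad \mathfrak{t}_{\hat T}(u+g)=\mathfrak{t}_F(u)+\mathfrak{b}(g).
\]
Then $\mathfrak{b}$ is a densely defined, closed, nonnegative form in $\cW:=\overline{\cN_0\cap\cH_{\hat T}}\subseteq\cN_0$, hence by the second representation theorem is the form of a unique nonnegative self-adjoint $B$ in $\cW$ with $\dom(B^{1/2})=\cN_0\cap\cH_{\hat T}$. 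Conversely, given $(B,\cW)$, the form $(u+g)\mapsto\mathfrak{t}_F(u)+\|B^{1/2}g\|^2$ on $\cH_F\dotplus\dom(B^{1/2})$ is nonnegative and closed by the same argument, hence defines a nonnegative self-adjoint extension $T_{B,\cW}$ of $T$; the two assignments are mutually inverse, the displayed form identities are exactly how $T_{B,\cW}$ is built, and $\ker(T_{B,\cW})=\ker(B)$ follows from injectivity of $(T_F)^{1/2}$. I expect this step --- in particular the passage from the form core $\dom(\hat T)$ to all of $\cH_{\hat T}$ and the verification that the form splits with no cross term --- to be the main obstacle.

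\emph{Step 3 (operator domain, monotonicity, special cases).} By the first representation theorem, $f=u+g\in\dom(T_{B,\cW})$ with $T_{B,\cW}f=h$ iff $\mathfrak{t}_F(u,u')+(B^{1/2}g,B^{1/2}g')=(h,u'+g')$ for all $u'\in\cH_F$, $g'\in\dom(B^{1/2})$; taking $g'=0$ forces $u\in\dom(T_F)$ and $T_Fu=h$, and taking $u'=0$ forces $g\in\dom(B)$ and $Bg=P_{\cW}h=P_{\cW}T_Fu$, $P_{\cW}$ the orthogonal projection onto $\cW$. Writing $T_Fu=Tw_0+g_0$ with $w_0\in\dom(T)$, $g_0\in\cN_0$, and noting $P_{\cW}Tw_0=0$ (as $Tw_0\in\cN_0^{\perp}\subseteq\cW^{\perp}$), the constraint becomes $P_{\cW}g_0=Bg$; with $v:=g_0-P_{\cW}g_0\in\cN_0\cap\cW^{\perp}$ this yields $f=w_0+\big((T_F)^{-1}B+I\big)g+(T_F)^{-1}v$, which is \eqref{3.1}. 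Reversing the computation gives the opposite inclusion and, applying $T^*$ (using $T^*g=0$ on $\cN_0$), also $T_{B,\cW}=T^*|_{\dom(T_{B,\cW})}$; uniqueness of this decomposition, again via $T^*$ and $\cH=\ran(T)\oplus\cN_0$, gives directness of the three summands. Monotonicity is then immediate from the form identity: $B\le\wti B$ with $\wti\cW\subseteq\cW$ shrinks the form domain $\cH_F\dotplus\dom(B^{1/2})$ and increases the form pointwise, so $T_{B,\cW}\le T_{\wti B,\wti\cW}$. Finally, $\cW=\cN_0$, $B=0$ gives the form $\mathfrak{t}_F$ on the largest admissible domain $\cH_F\dotplus\cN_0$ with kernel $\cN_0$, i.e.\ the Kre\u\i n--von Neumann extension; and $\dom(B)=\{0\}$, $\cW=\{0\}$ gives $\mathfrak{t}_F$ on $\cH_F$, i.e.\ $T_F$, for which \eqref{3.1} collapses to $\dom(T_F)=\dom(T)\dotplus(T_F)^{-1}\cN_0$.
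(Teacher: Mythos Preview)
The paper does not prove Theorem~\ref{t3.1}; it is quoted verbatim from \cite{F17} and \cite{GNZ24} with the preface ``We now recall the following general result,'' and is then used as a black box throughout Section~\ref{s3}. So there is no proof in the paper to compare against.

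That said, your outline is the standard Birman--Kre\u\i n--Vishik argument and is essentially correct. The key ingredients---$\cH_F\cap\cN_0=\{0\}$, vanishing of the cross form $\mathfrak{t}_{\hat T}(u,g)$ for $u\in\cH_F$, $g\in\cN_0\cap\cH_{\hat T}$, and the completion step using $\mathfrak{t}_F(u)\ge\varepsilon\|u\|^2$ to control the $\cH_F$-component separately---are all handled correctly, and your derivation of the explicit domain \eqref{3.1} from the first representation theorem via $T_Fu=Tw_0+g_0$ and $P_{\cW}g_0=Bg$ is clean. The one place to be slightly more explicit, if you write this up fully, is the closedness of $\cH_F\dotplus(\cN_0\cap\cH_{\hat T})$ in the $\mathfrak{t}_{\hat T}$-graph norm: your parenthetical sketch is right (Cauchy in the form norm forces each summand Cauchy, by the lower bound on the first and subtraction for the second), but it carries the whole argument, so it deserves a displayed computation rather than a parenthesis. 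Everything else is routine.
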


\subsection{General quasi-regular case}\label{subgen}
We begin our analysis with the nonoscillatory, quasi-regular setting so that boundary conditions are needed at both endpoints (i.e., $T_{min}$ has deficiency index 2).
Start by considering any principal, $u_a$, and nonprincipal, $\hatt u_a$, solutions of $\tau u=0$ at $x=a$ where each are solutions on $(a,b)$ with Wronskian 1, using $u_a,\hatt u_a$ to define the generalized boundary values at $x=a$ in \eqref{2.20}, \eqref{2.21}. 
We now distinguish a specific nonprincipal solution, $\hatt v_a$, by
\begin{equation}\label{B.3.8}
\hatt v_a(0,x):=\hatt u_a(0,x)-\frac{\langle\hatt u_a,u_a\rangle_{L^2_r}}{\norm{u_a}_{L^2_r}^2}u_a(0,x),\quad x\in(a,b),
\end{equation}
where we abbreviate $\Lr$ by $L_r^2$ in the norm and inner product.
Note that $\hatt v_a$ satisfies the same leading behavior as $\hatt u_a$ near $x=a$, is orthogonal to $u_a$ by construction (which simplifies the following analysis), and
\begin{equation} \label{eq:3.9}
\wti{\hatt v}_a(0,a)=1=\wti u_a^{\, \prime}(0,a),\quad \wti{u}_a(0,a)=0,\quad \wti{\hatt v}_a^{\,\prime}(0,a)=-\frac{\langle\hatt u_a,u_a\rangle_{L^2_r}}{\norm{u_a}_{L^2_r}}.
\end{equation}

Using the variation of constants formula now yields the general form for a solution of $\tau \xi=f$ (for an appropriate $f$ to be specified shortly) such that $\wti\xi(a)=0=\wti\xi(b)$,
\begin{align}
\xi(x)&=\hatt v_a(0,x)\int_a^x u_a(0,t)f(t)r(t)dt \label{B.1} \\
&\quad + u_a(0,x)\bigg(\int_x^b \hatt v_a(0,t)f(t)r(t)dt-\frac{\wti{\hatt v}_a(0,b)}{\wti{u}_a(0,b)}\int_a^b u_a(0,t)f(t)r(t)dt\bigg). \no
\end{align}
Note that $\wti u_a(0,b)\neq0$, since if it were then zero would be an eigenvalue of the Friedrichs extension, contradicting our strict positivity assumptions.
From \eqref{B.1}, it is easy to verify the following (implementing $W(\hatt v_a,u_a)=1$):
\begin{align}
&\wti\xi(a)=0=\wti\xi(b),\\
&\wti{\xi}^{\, \prime}(a)=\int_a^b\bigg[\hatt v_a(0,t)-\frac{\wti{\hatt v}_a(0,b)}{\wti{u}_a(0,b)}u_a(0,t)\bigg]f(t)r(t)dt,\label{B.3}\\
&\wti{\xi}^{\, \prime}(b)=-\frac{1}{\wti{u}_a(0,b)}\int_a^b u_a(0,t)f(t)r(t)dt.\label{B.4}
\end{align}

We now specialize to when $f$ equals $u_a$ or $\hatt v_a$ (see \eqref{B.3.8}) by defining
\begin{equation}\label{B.5}
\xi_a(x):=(T_F)^{-1}(u_a)(x),\quad \hatt\xi_a(x):=(T_F)^{-1}(\hatt v_a)(x).
\end{equation}
From \eqref{B.3} and \eqref{B.4}, and employing $W(\hatt v_a,u_a)=1$, one concludes that
\begin{equation}\label{B.6}
\wti\xi_a^{\, \prime}(a)=-\frac{\wti{\hatt v}_a(0,b)}{\wti{u}_a(0,b)}\norm{u_a}_{L^2_r}^2,\quad \wti\xi_a^{\, \prime}(b)=-\frac{\norm{u_a}_{L^2_r}^2}{\wti{u}_a(0,b)},
\end{equation}
and
\begin{equation}\label{B.7}
\wti{\hatt \xi}_a^{\, \prime}(a)=\norm{\hatt v_a}_{L_r^2}^2,\quad \wti{\hatt \xi}_a^{\, \prime}(b)=0.
\end{equation}

The functions $\xi_a$ and $\hatt \xi_a$ given in \eqref{B.5}, along with their boundary values \eqref{B.6} and \eqref{B.7}, are the main ingredients needed in the present investigation.

\subsubsection{The case \texorpdfstring{$\dim(\mathcal{W})=2$}{dim2}}
Using Theorem \ref{t3.1}, our goal is to give a description of all nonnegative self-adjoint extensions of $T_{min}$ in terms of generalized boundary conditions. We begin with the case $\dim(\mathcal{W})=2$ so that $\mathcal{W}=\mathcal{N}_0=\mbox{span}\{\hatt v_a, u_a\}$. For this case, all nonnegative self-adjoint extensions are parametrized by nonnegative self-adjoint operators $B$ on $\mathcal{N}_0$. Letting
\begin{align}
Bu_a&=b_{11}u_a+b_{12}\hatt v_a,\\
B\hatt v_a&=b_{21}u_a+b_{22}\hatt v_a,
\end{align}
the requirement that $B$ be self-adjoint yields the condition 
\begin{equation}
b_{21}=\overline b_{12}\frac{\|\hatt v_a\|_{L_r^2}^2}{\| u_a\|_{L_r^2}^2},
\end{equation}
and the nonnegativity of $B$ is equivalent to the conditions
\begin{equation}\label{3.20a}
b_{11}, b_{22}\geq 0\quad\mbox{and}\quad b_{11}b_{22}\|u_a\|_{L_r^2}^2-|b_{12}|^2\|\hatt v_a\|^2_{L_r^2}\geq 0.
\end{equation}
In addition to $b_{11}, b_{12}$, and $b_{22}$ satisfying these conditions, let $\hatt{B}$ be another self-adjoint and nonnegative auxiliary operator on $\mathcal{N}_0$. 
It immediately follows that $T_{\hatt{B},\mathcal{N}_0}\geq T_{B,\mathcal{N}_0}\geq 0$ if and only if  $\hatt b_{11}\geq b_{11}$, $\hatt b_{22}\geq b_{22}$, and
\begin{equation}
(\hatt b_{11}-b_{11})(\hatt b_{22}-b_{22})\norm{u_a}_{L_r^2}^2-|\hatt b_{12}-b_{12}|^2\norm{\hatt v_a}_{L_r^2}^2\geq0.
\end{equation}
Now, let us focus on finding the generalized boundary conditions determining $\dom(T_{B,\mathcal{N}_0})$. By Theorem \ref{t3.1}, we have
\begin{align}
&\dom(T_{B,\mathcal{N}_0})=\dom(T_{min})\dot{+}\{((T_F)^{-1}B+I)\eta\:|\:\eta\in\mathcal{N}_0\}\\
&=\dom(T_{min})\dot{+}\mbox{span}\left\{ b_{11}\xi_a+b_{12}\hatt \xi_a+u_a\right\}\dot{+} \mbox{span}\left\{ \overline b_{12}\frac{\|\hatt v_a\|_{L_r^2}^2}{\| u_a\|_{L_r^2}^2}\xi_a+b_{22}\hatt \xi_a+\hatt v_a\right\}. \notag
\end{align}
Using the generalized boundary values of $u_a, \hatt v_a, \xi_a$, and $\hatt \xi_a$ given in equations \eqref{eq:3.9}, \eqref{B.6}, and \eqref{B.7} yields the following description of nonnegative self-adjoint extensions of $T_{min}$ in terms of generalized boundary conditions (assuming \eqref{3.20a}):
\begin{align}
\wti{g}^{\, \prime}(a)&=\frac{\wti g (b)-\wti g(a)\wti{\hatt v}_a(0,b)}{\wti u_a(0,b)}\bigg[1- b_{11}\frac{\wti{\hatt v}_a(0,b)}{\wti u_a(0,b)}\norm{u_a}_{L_r^2}^2+b_{12}\norm{\hatt v_a}_{L_r^2}^2\bigg] \notag\\
&\quad +\wti g(a)\norm{\hatt v_a}_{L_r^2}^2\bigg[b_{22}-\overline{b}_{12}\frac{\wti{\hatt v}_a(0,b)}{\wti u_a(0,b)}+\frac{\wti{\hatt v}_a^{\,\prime}(0,a)}{\norm{\hatt v_a}_{L_r^2}^2}\bigg],  \label{3.24}\\
\wti{g}^{\, \prime}(b)&=\frac{\wti g (b)-\wti g(a)\wti{\hatt v}_a(0,b)}{\wti u_a(0,b)}\bigg[\wti u_a^{\, \prime}(0,b)-\frac{b_{11}\norm{u_a}_{L_r^2}^2}{\wti u_a(0,b)}\bigg]  +\wti g(a)\bigg[\wti{\hatt v}_a^{\, \prime}(0,b)-\frac{\overline{b}_{12}\norm{\hatt v_a}_{L_r^2}^2}{\wti u_a(0,b)}\bigg]. \label{3.24a}
\end{align}

\begin{remark}\label{r3.4}
To compare this description to the previous results of Brown and Evans one can add \eqref{3.24} and \eqref{3.24a}, and then compare the resulting equation to \cite[Eq. (2.13)]{BE16} utilizing \eqref{2.20}, \eqref{2.21}. We point out that this is a straightforward, though somewhat tedious, calculation and thus do not include the explicit details here. The important takeaway is that the description given via \eqref{3.24} and \eqref{3.24a}  allows us to readily write these as classical separated and coupled boundary condition descriptions as well as order the extensions (when possible), as we will show next.

A similar comparison holds when considering our results for the case $\dim(\mathcal{W})=1$ in the next section compared to \cite[Eq. (2.12)]{BE16}.

See Remark \ref{Rem:3.3} $(iv)$ for a explicit comparison of our results to those of \cite{BE16} whenever the right endpoint is in the limit point case.
\end{remark}

Now, to compare these requirements to the separated and coupled boundary conditions in \eqref{2.25} and \eqref{2.26}, we begin by noting that separated boundary conditions occur in \eqref{3.24} if and only if
\begin{equation}\label{3.25a}
1- b_{11}\frac{\wti{\hatt v}_a(0,b)}{\wti u_a(0,b)}\norm{u_a}_{L_r^2}^2+b_{12}\norm{\hatt v_a}_{L_r^2}^2=0.
\end{equation}
Note this implies $b_{12}\in\bbR$. Furthermore, a careful study of the coefficient on $\wti g(a)$ in \eqref{3.24a} shows it is actually just $-1/\wti u_a(0,b)$ times \eqref{3.25a} with $\overline{b}_{12}$ replacing $b_{12}$. Since $b_{12}$ is real, \eqref{3.24a} also becomes separated boundary conditions at $x=b$ if \eqref{3.25a} holds, as it should. Comparing to \eqref{2.25} yields the explicit choices of $\alpha$ and $\beta$, which are given in Theorem \ref{Thm:dim2} below (after solving for $b_{12}$ in \eqref{3.25a}).

To analyze the coupled boundary conditions, we assume \eqref{3.25a} does not hold so that we can solve for $\wti g(b)$ in \eqref{3.24}, then substitute this into \eqref{3.24a} to have each given in the form of \eqref{2.26}. For brevity and convenience, we express this in the next theorem as equations rather than writing what each entry in $R$ is equal to as this simply involves the modulus (with a negative sign if the value is a negative real number or lies in the lower half of the complex plane).

We summarize these results in the following theorem.

\begin{theorem}\label{Thm:dim2}
Assume Hypothesis \ref{h2.1}, that $T_{min}$ is strictly positive, and that $\tau$ is in the limit circle case at $a$ and $b$. In addition, let $b_{11},b_{22}\geq 0,\ b_{12}\in\bbC,$ such that
\begin{equation} \label{eq:3.17}
b_{11}b_{22}\norm{u_a}_{L_r^2}^2-|b_{12}|^2\norm{\hatt v_a}_{L_r^2}^2\geq0.
\end{equation}
If $\dim(\cW)=2$, then the separated boundary conditions for $g\in\dom(T_{max})$ describing nonnegative self-adjoint extensions of $T_{min}$ are given by
\begin{align}
\cot(\a)&=b_{11}\bigg(\frac{\wti{\hatt v}_a(0,b)}{\wti u_a(0,b)}\bigg)^2 \norm{u_a}_{L_r^2}^2-b_{22}\norm{\hatt v_a}_{L_r^2}^2-\wti{\hatt v}_a^{\,\prime}(0,a)-\frac{\wti{\hatt v}_a(0,b)}{\wti u_a(0,b)},\ \a\in(0,\pi),  \label{3.27}\\
\cot(\b)&=\frac{1}{\wti u_a(0,b)}\bigg(\wti u_a^{\, \prime}(0,b)-b_{11}\frac{\norm{u_a}_{L_r^2}^2}{\wti u_a(0,b)}\bigg),\ \b\in(0,\pi),\label{3.28}
\end{align}
whenever
\begin{equation}\label{3.29aa}
b_{12}=\norm{\hatt v_a}_{L_r^2}^{-2}\bigg( b_{11}\frac{\wti{\hatt v}_a(0,b)}{\wti u_a(0,b)}\norm{u_a}_{L_r^2}^2-1\bigg)\quad (\textrm{implying }\ b_{12}\in\bbR),
\end{equation}
and, otherwise, the coupled boundary conditions are given by
\begin{align}
\wti g(b)&=\wti{g}^{\, \prime}(a)\wti u_a(0,b)\bigg[1- b_{11}\frac{\wti{\hatt v}_a(0,b)}{\wti u_a(0,b)}\norm{u_a}_{L_r^2}^2+b_{12}\norm{\hatt v_a}_{L_r^2}^2\bigg]^{-1} \label{3.29a}\\
& \quad+\wti g(a)\Bigg(\wti{\hatt v}_a(0,b)-\norm{\hatt v_a}_{L_r^2}^2\bigg[b_{22}\wti u_a(0,b)-\overline{b}_{12}\wti{\hatt v}_a(0,b)+\wti u_a(0,b)\frac{\wti{\hatt v}_a^{\,\prime}(0,a)}{\norm{\hatt v_a}_{L_r^2}^2}\bigg] \notag \\
&\hspace{4.3cm}\times \bigg[1- b_{11}\frac{\wti{\hatt v}_a(0,b)}{\wti u_a(0,b)}\norm{u_a}_{L_r^2}^2+b_{12}\norm{\hatt v_a}_{L_r^2}^2\bigg]^{-1}\Bigg),\notag \\
\wti{g}^{\, \prime}(b)&=\wti{g}^{\, \prime}(a)\bigg[\wti u_a^{\, \prime}(0,b)-\frac{b_{11}\norm{u_a}_{L_r^2}^2}{\wti u_a(0,b)}\bigg]\bigg[1- b_{11}\frac{\wti{\hatt v}_a(0,b)}{\wti u_a(0,b)}\norm{u_a}_{L_r^2}^2+b_{12}\norm{\hatt v_a}_{L_r^2}^2\bigg]^{-1}  \notag\\
&\quad +\wti g(a)\Bigg\{\wti{\hatt v}_a^{\, \prime}(0,b)-\frac{\overline{b}_{12}\norm{\hatt v_a}_{L_r^2}^2}{\wti u_a(0,b)}-\frac{1}{\wti u_a(0,b)}\bigg[\wti u_a^{\, \prime}(0,b)-\frac{b_{11}\norm{u_a}_{L_r^2}^2}{\wti u_a(0,b)}\bigg] \label{3.30a}\\
& \hspace{1.8cm}\times \norm{\hatt v_a}_{L_r^2}^2\bigg[b_{22}\wti u_a(0,b)-\overline{b}_{12}\wti{\hatt v}_a(0,b)+\wti u_a(0,b)\frac{\wti{\hatt v}_a^{\,\prime}(0,a)}{\norm{\hatt v_a}_{L_r^2}^2}\bigg] \notag \\
&\hspace{3.5cm}\times \bigg[1- b_{11}\frac{\wti{\hatt v}_a(0,b)}{\wti u_a(0,b)}\norm{u_a}_{L_r^2}^2+b_{12}\norm{\hatt v_a}_{L_r^2}^2\bigg]^{-1}\Bigg\}. \notag
\end{align}
Moreover, $0\leq T_{B,\mathcal{N}_0}\leq T_{\hatt B,\mathcal{N}_0}$ if and only if $\hatt b_{11}\geq b_{11}$, $\hatt b_{22}\geq b_{22}$, and
\begin{equation} \label{eq:3.18}
(\hatt b_{11}-b_{11})(\hatt b_{22}-b_{22})\norm{u_a}_{L_r^2}^2-|\hatt b_{12}-b_{12}|^2\norm{\hatt v_a}_{L_r^2}^2\geq0.
\end{equation}
\end{theorem}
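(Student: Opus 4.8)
The plan is to specialize Theorem~\ref{t3.1} to $T=T_{min}$ in $\cH=\Lr$, with $\varepsilon$ the strict positivity bound. Since $\tau$ is limit circle at both endpoints, $n_\pm(T_{min})=2$, every solution of $\tau u=0$ lies in $\Lr$, and $\cN_0=\ker(T_{max})=\linspan\{u_a,\hatt v_a\}$ is two-dimensional; when $\dim(\cW)=2$ this forces $\cW=\cN_0$, so the nonnegative self-adjoint extensions are exactly the operators $T_{B,\cN_0}$ with $B\ge 0$ self-adjoint on $\cN_0$. Expressing $B$ in the orthogonal but non-normalized basis $\{u_a,\hatt v_a\}$ through the entries $b_{ij}$, symmetry of $B$ relative to $\langle\dott,\dott\rangle_{\Lr}$ forces $b_{21}=\overline b_{12}\norm{\hatt v_a}_{L_r^2}^2/\norm{u_a}_{L_r^2}^2$, and the weighted $2\times2$ positive-semidefiniteness criterion (the weights being $\norm{u_a}_{L_r^2}^2$, $\norm{\hatt v_a}_{L_r^2}^2$) shows $B\ge 0$ is equivalent to $b_{11},b_{22}\ge 0$ together with \eqref{eq:3.17}. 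This fixes the parametrization; it remains to translate each $T_{B,\cN_0}$ into boundary conditions.

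For that, I would insert the precomputed boundary values into the direct-sum description \eqref{3.1}: $\dom(T_{min})$ contributes only vanishing generalized boundary values, so all boundary data of $g\in\dom(T_{B,\cN_0})$ is carried by the two vectors $b_{11}\xi_a+b_{12}\hatt\xi_a+u_a$ and $\overline b_{12}(\norm{\hatt v_a}_{L_r^2}^2/\norm{u_a}_{L_r^2}^2)\xi_a+b_{22}\hatt\xi_a+\hatt v_a$, where $\xi_a,\hatt\xi_a$ are defined in \eqref{B.5}; plugging in \eqref{eq:3.9}, \eqref{B.6}, \eqref{B.7} and eliminating the two free coefficients produces exactly the relations \eqref{3.24} and \eqref{3.24a}, the computation already carried out in the text above (here one also uses $\wti u_a(0,b)\neq 0$, valid by strict positivity). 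One then splits cases by whether the bracket multiplying $[\wti g(b)-\wti g(a)\wti{\hatt v}_a(0,b)]/\wti u_a(0,b)$ in \eqref{3.24} vanishes, i.e.\ by condition \eqref{3.25a}; solving \eqref{3.25a} for $b_{12}$ yields \eqref{3.29aa} and forces $b_{12}\in\bbR$.

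In the separated case one must also check that the $\wti g(a)$-coefficient in \eqref{3.24a} vanishes: by the Wronskian identity \eqref{2.27} at $b$, namely $\wti{\hatt v}_a(0,b)\wti u_a^{\, \prime}(0,b)-\wti{\hatt v}_a^{\, \prime}(0,b)\wti u_a(0,b)=W(\hatt v_a,u_a)(b)=1$, that coefficient collapses to $-\wti u_a(0,b)^{-1}$ times the left side of \eqref{3.25a} with $\overline b_{12}$ in place of $b_{12}$, hence is zero once $b_{12}$ is real and \eqref{3.25a} holds. Then \eqref{3.24}--\eqref{3.24a} reduce to $\wti g^{\, \prime}(a)=-\cot(\a)\wti g(a)$ and $\wti g^{\, \prime}(b)=\cot(\b)\wti g(b)$, and reading off the constants after inserting \eqref{3.29aa} gives precisely \eqref{3.27}, \eqref{3.28}, matching \eqref{2.25}. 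If \eqref{3.25a} fails, the bracket is nonzero, so \eqref{3.24} solves for $\wti g(b)$ in terms of $\wti g(a),\wti g^{\, \prime}(a)$, yielding \eqref{3.29a}, and substituting into \eqref{3.24a} expresses $\wti g^{\, \prime}(b)$ as in \eqref{3.30a}; since the resulting extension is self-adjoint by Theorem~\ref{t3.1} and is not of separated type, Theorem~\ref{t2.10}\,$(iii)$ forces it into the coupled form \eqref{2.26}, so the a~priori complex coefficient matrix of \eqref{3.29a}--\eqref{3.30a} automatically factors as $e^{i\eta}R$ with $R\in SL(2,\bbR)$, $\eta\in[0,\pi)$, up to signs --- the purely computational reconciliation the theorem summarizes in its remark about moduli.

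For the ordering, Theorem~\ref{t3.1} gives $\dom((T_{B,\cN_0})^{1/2})=\dom((T_F)^{1/2})\dotplus\dom(B^{1/2})$ with $\norm{(T_{B,\cN_0})^{1/2}(u+g)}_\cH^2=\norm{(T_F)^{1/2}u}_\cH^2+\norm{B^{1/2}g}_\cH^2$; since $\cN_0$ is finite-dimensional, $\dom(B^{1/2})=\dom(\hatt B^{1/2})=\cN_0$, so $T_{B,\cN_0}$ and $T_{\hatt B,\cN_0}$ share the form domain $\dom((T_F)^{1/2})\dotplus\cN_0$, and setting $u=0$ in the form identity shows $T_{B,\cN_0}\le T_{\hatt B,\cN_0}$ iff $\norm{B^{1/2}g}_\cH\le\norm{\hatt B^{1/2}g}_\cH$ for all $g\in\cN_0$, i.e.\ iff $B\le\hatt B$ as nonnegative self-adjoint operators on $\cN_0$; applying the criterion of the first paragraph to $\hatt B-B$ (again self-adjoint relative to the weighted inner product, with entries $\hatt b_{ij}-b_{ij}$) yields $\hatt b_{11}\ge b_{11}$, $\hatt b_{22}\ge b_{22}$, and \eqref{eq:3.18}; the forward implication also follows directly from the ordering statement in Theorem~\ref{t3.1}. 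The \emph{main obstacle} is purely organizational: the passage from \eqref{3.24}--\eqref{3.24a} to the explicit formulas \eqref{3.27}--\eqref{3.30a} is a lengthy but mechanical elimination, and the single point that is not purely formal is recognizing that the Wronskian identity \eqref{2.27} at $b$ is exactly what makes the $a$- and $b$-conditions decouple together, so that \eqref{3.25a} alone --- rather than a second independent condition --- characterizes the separated extensions.
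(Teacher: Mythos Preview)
Your proposal is correct and follows essentially the same route as the paper: the paper derives \eqref{3.24}--\eqref{3.24a} from Theorem~\ref{t3.1} and the boundary data \eqref{eq:3.9}, \eqref{B.6}, \eqref{B.7}, identifies \eqref{3.25a} as the separated case (noting, as you do, that the $\wti g(a)$-coefficient in \eqref{3.24a} collapses via the Wronskian to the conjugate of \eqref{3.25a}), and obtains the ordering directly from Theorem~\ref{t3.1}. Your write-up makes the Wronskian step and the form-domain argument for the ordering a bit more explicit than the paper, but there is no substantive difference in strategy.
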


\begin{remark}\label{r3.5}
$(i)$ One readily verifies that the choices for $R$ defined through \eqref{3.29a} and \eqref{3.30a} yield $\det(R)=1$.\\[1mm] 
$(ii)$ The choice $B=0$ (i.e., $b_{11}=b_{22}=b_{12}=0$) in Theorem \ref{Thm:dim2} recovers the characterization of the Krein--von Neumann extensions given in \eqref{2.31a} as expected.\\[1mm]
$(iii)$ If one is interested in strictly positive extensions, utilizing the small-$z$ expansion of the characteristic functions given in \eqref{2.43} and \eqref{2.44} readily yields which of these extensions have zero as an eigenvalue.\\[1mm]
$(iv)$ Finally, let us make a comment about when restricting to only considering extensions with separated boundary conditions in Theorem \ref{Thm:dim2}. Given two nonnegative extensions, $T_{\a,\b}$ and $T_{\hatt \a,\hatt\b}$, is it true that $\a\leq \hatt\a$ and $\b\leq\hatt\b$ if and only if $T_{\a,\b}\leq T_{\hatt\a,\hatt\b}$?

Note by \eqref{3.28}, $\hatt{b}_{11}\geq b_{11}$ if and only if $\b\leq \hatt\b$.
By \eqref{3.27}, $\a\leq\hatt\a$ if and only if
\begin{equation}\label{3.33}
(\hatt b_{22}-b_{22})\norm{\hatt v_a}_{L_r^2}^2\geq (\hatt b_{11}-b_{11})\left(\frac{\wti{\hatt v}_a(0,b)}{\wti u_a(0,b)}\right)^2\norm{u_a}_{L_r^2}^2,
\end{equation}
which, in particular, implies that $\hatt b_{22}\geq b_{22}$. However, just because $\hatt b_{22}\geq b_{22}$ it does not in general imply \eqref{3.33} holds.

To consider \eqref{eq:3.18}, we note the difference of \eqref{3.29aa} for $b_{12}$ and $\hatt b_{12}$ yields
\begin{equation}\label{3.34}
(\hatt b_{12}-b_{12})^2=(\hatt b_{11}-b_{11})^2 \left(\frac{\wti{\hatt v}_a(0,b)}{\wti u_a(0,b)}\right)^2\frac{\norm{u_a}_{L_r^2}^4}{\norm{\hatt v_a}_{L_r^2}^4}.
\end{equation}
Substituting \eqref{3.34} into the left-hand side of \eqref{eq:3.18} yields that \eqref{eq:3.18} holds if and only if \eqref{3.33} holds. This finishes the other direction of the implication, showing $\a\leq \hatt\a$ and $\b\leq\hatt\b$ if and only if $T_{\a,\b}\leq T_{\hatt\a,\hatt\b}$ in this setting.

Therefore, when restricting to the separated boundary condition case only in Theorem \ref{Thm:dim2}, it suffices to simply  compare the corresponding angles parameterizing the boundary conditions at each endpoint. Moreover, one cannot compare separated boundary conditions whenever $\a<\a'$ and $\b>\b'$ (or vice versa). In fact, by Theorem \ref{Thm:dim1}, and the fact that the Friedrichs extensions is always larger than all extensions, this is true for all self-adjoint extensions with separated boundary conditions which are nonnegative (regardless of the $\dim(\cW)$).

We will study whenever one endpoint has the same fixed boundary condition in both extensions, say $\b$, in Section \ref{subnonneg}, showing a convenient characterization of the range of admissible choices of $\a$ for nonnegative extensions.
\end{remark}

\subsubsection{The case \texorpdfstring{$\dim(\mathcal{W})=1$}{dim1}}
Next, we study the case $\dim(\mathcal{W})=1$. In this case, letting $c\in\mathbb{C}\cup\{\infty\}$, we parameterize $\mathcal{W}=\mathcal{W}_c$ as follows:
\begin{equation}
\mathcal{W}_c=\begin{cases} \mbox{span}\{\hatt{v}_a+cu_a\},\quad&\mbox{if}\:c\in\mathbb{C},\\ \mbox{span}\{u_a\},\quad&\mbox{if}\:c=\infty.\end{cases}
\end{equation}
It follows that $\mathcal{N}_0\cap\mathcal{W}_c^\perp$ is given by
\begin{equation}
\mathcal{N}_0\cap\mathcal{W}_c^\perp=\begin{cases} \mbox{span}\left\{\overline{c}\frac{\hatt{v}_a}{\|\hatt v_a\|^2_{L_r^2}}-\frac{u_a}{\|u_a\|^2_{L_r^2}}\right\},\quad&\mbox{if}\:c\in\mathbb{C},\\ \mbox{span}\{\hatt v_a\},\quad&\mbox{if}\:c=\infty.\end{cases}
\end{equation}
Any nonnegative auxiliary operator $B$ on $\mathcal{W}_c$ is just a multiplication by a nonnegative scalar $\kappa\geq 0$. For convenience, we introduce the notation $T_{B,\mathcal{W}_c}=:T_{\kappa,c}$. By Theorem \ref{t3.1}, we have $0\leq T_{\kappa,c}\leq T_{\hatt{\kappa},\hatt c}$ if and only if $c=\hatt{c}$ and $0\leq\kappa\leq \hatt{\kappa}$. 

Let us also establish the necessary and sufficient conditions on the operator $B$ defined in equations \eqref{eq:3.17} and \eqref{eq:3.18} such that $T_{B,\mathcal{N}_0}\leq T_{\kappa,c}$. Since $\mathcal{W}_c\subsetneq \mathcal{N}_0$, this will be the case if
\begin{equation}
\langle \eta, B\eta\rangle_{L^2_r}\leq \langle \eta, \kappa\eta\rangle_{L^2_r}
\end{equation}
for every $\eta\in\mathcal{W}_c$. For $c=\infty$, this is equivalent to the condition
\begin{equation}
\langle \hatt v_a, B \hatt v_a\rangle_{L^2_r} = b_{22} \|\hatt v_a\|_{L^2_r}^2 \leq \kappa \|\hatt v_a\|_{L^2_r}^2 \quad \Leftrightarrow \quad b_{22}\leq \kappa.
\end{equation}
If $c\in\mathbb{C}$, we get the condition
\begin{align}
\langle \hatt v_a+cu_a, B(\hatt v_a+cu_a)\rangle_{L^2_r}&=(b_{22}+2\mbox{Re}(cb_{12}))\|\hatt v_a\|^2_{L^2_r}+|c|^2b_{11}\|u_a\|^2_{L^2_r}  \\
&\leq \langle \hatt v_a+cu_a,\kappa(\hatt v_a+cu_a)\rangle_{L^2_r}= \kappa \|\hatt v_a\|^2_{L^2_r}+|c|^2\kappa\|u_a\|^2_{L^2_r}.\notag
\end{align}

Now, by Theorem \ref{t3.1}, the operator $T_{\kappa,c}$ for $c\in\mathbb{C}$ has domain
\begin{align}
&\dom(T_{\kappa,c}) \notag\\
&\quad=\dom(T_{min})\dot{+}\{((T_F)^{-1}\kappa+I)\eta\:|\:\eta\in\mathcal{W}_c\}\dot{+}\{(T_F)^{-1}v\:|\:v\in\mathcal{N}_0\cap\mathcal{W}_c^\perp\}\\
&\quad=\dom(T_{min})\dot{+}\mbox{span}\left\{\kappa (\hatt \xi_a+c\xi_a)+\hatt{v}_a+cu_a\right\}\dot{+}\mbox{span}\left\{\overline{c}\frac{\hatt{\xi}_a}{\|\hatt v_a\|^2_{L_r^2}}-\frac{\xi_a}{\|u_a\|^2_{L_r^2}}\right\},\notag
\end{align}
while for $c=\infty$, the domain $\dom(T_{\kappa,\infty})$ is given by
\begin{equation}
\dom(T_{\kappa,\infty})=\dom(T_{min})\dot{+}\mbox{span}\left\{\kappa \xi_a+u_a\right\}\dot{+}\mbox{span}\{\hatt{\xi}_a\}.
\end{equation}

We then find the following generalized boundary conditions that the elements of $\dom(T_{max})$ must satisfy in order to also belong to $\dom(T_{\kappa,\infty})$:

\begin{align}
\wti{g}(a)=0,\quad 
\wti{g}^{\,\prime}(b)=\frac{\wti{g}(b)}{\wti u_a(0,b)}\left(\wti u_a^{\,\prime}(0,b)-\kappa\frac{\|u_a\|^2_{L^2_r}}{\wti u_a(0,b)}\right).
\end{align}
This describes separated boundary conditions with the Dirichlet-type boundary condition imposed at the left endpoint $a$ and a separated boundary condition at $b$ defined via \eqref{2.25} (see Theorem \ref{Thm:dim1} for explicit form).

The second case of separated boundary conditions in this setting occurs when imposing the Dirichlet-type boundary condition at the right endpoint. This corresponds to the special value $c_F=-\wti{\hatt v}_a(0,b)/\wti u_a(0,b)$, with the boundary conditions determining the elements of $\dom(T_{\kappa,c_F})$ given by the following:
\begin{align}
\wti g(b)=0,\quad
\wti g^{\,\prime}(a)&=\wti g(a)\bigg(\kappa \bigg[ \|\hatt v_a\|^2_{L^2_r}+\bigg(\frac{\wti{\hatt v}_a(0,b)}{\wti u_a(0,b)}\bigg)^2\|u_a\|^2_{L^2_r}\bigg]-\frac{\wti{\hatt v}_a(0,b)}{\wti u_a(0,b)}+\wti{\hatt v}_a^{\,\prime}(0,a)\bigg).
\end{align}

Notice that since the principal and nonprincipal solutions are real-valued, the only way that $\wti{\hatt v}_a(0,b)+d\wti u_a(0,b)=0,\ d\in\{c,\overline{c}\}$, would be if $c=c_F\in\bbR$.
Thus, the generalized boundary conditions for $c\in\mathbb{C}\setminus\{c_F\}$ can be written as coupled boundary conditions as in \eqref{2.26}.

We once again summarize these results.

\begin{theorem}\label{Thm:dim1}
Assume Hypothesis \ref{h2.1}, that $T_{min}$ is strictly positive, and that $\tau$ is in the limit circle case at $a$ and $b$. Let $\kappa\geq 0$ and $c\in\mathbb{C}\cup\{\infty\}$. If $\dim(\cW)=1$, then the separated boundary conditions for $g\in\dom(T_{max})$ describing nonnegative self-adjoint extensions of $T_{min}$ are given by
\begin{equation}\label{3.42}
\a=\pi,\quad \cot(\beta)=\frac{1}{\wti u_a(0,b)}\bigg(\wti u_a^{\,\prime}(0,b)-\kappa\frac{\|u_a\|^2_{L^2_r}}{\wti u_a(0,b)}\bigg),\ \beta\in(0,\pi),
\end{equation}
for $c=\infty$,
\begin{equation}\label{3.43}
\b=\pi,\quad \cot(\a)=\frac{\wti{\hatt v}_a(0,b)}{\wti u_a(0,b)}-\wti{\hatt v}_a^{\,\prime}(0,a)-\kappa \bigg[ \|\hatt v_a\|^2_{L^2_r}+\bigg(\frac{\wti{\hatt v}_a(0,b)}{\wti u_a(0,b)}\bigg)^2\|u_a\|^2_{L^2_r}\bigg],\ \a\in(0,\pi),
\end{equation}
for $c=c_F=-\wti{\hatt v}_a(0,b)/\wti u_a(0,b)$, and the coupled boundary conditions are given by
\begin{align}
\wti{g}(b)&=\wti{g}(a)\left(\wti{\hatt v}_a(0,b)+c\wti u_a(0,b)\right),\\
\wti{g}^{\,\prime}(b)&=\frac{\wti{g}^{\,\prime}(a)}{\wti{\hatt v}_a(0,b)+\overline{c}\wti u_a(0,b)}+\wti{g}(a)\Bigg[
\wti{\hatt v}_a^{\,\prime}(0,b)+c\wti{u}_a^{\,\prime}(0,b)-\kappa c\frac{\|u_a\|^2_{L_r^2}}{\wti u_a(0,b)} \\
&\qquad-\frac{1}{\wti{\hatt v}_a(0,b)+\overline{c}\wti u_a(0,b)}\bigg(\kappa\bigg(\|\hatt v_a\|^2_{L^2_r}-c\frac{\wti{\hatt v}_a(0,b)}{\wti u_a(0,b)}\|u_a\|^2_{L^2_r}\bigg)+c+\wti{\hatt v}_a^{\,\prime}(0,a)\bigg)\Bigg],   \notag
\end{align}
for $c\in\mathbb{C}\backslash\{c_F\}$. In particular, $R_{12}=0$ and $R_{22}=1/R_{11}$ in \eqref{2.26} for this case.

Moreover, $0\leq T_{\kappa,c}\leq T_{\hatt{\kappa},\hatt c}$ if and only if $c=\hatt{c}$ and $0\leq\kappa\leq \hatt{\kappa}$. In addition, $T_{B,\mathcal{N}_0}\leq T_{\kappa,c}$ if and only if $b_{22}\leq \kappa$ when $c=\infty$ or 
\begin{equation}
\langle \hatt v_a+cu_a, B(\hatt v_a+cu_a)\rangle_{L^2_r}\leq \kappa \|\hatt v_a\|^2_{L^2_r}+|c|^2\kappa\|u_a\|^2_{L^2_r},\quad c\in\mathbb{C}.
\end{equation}
\end{theorem}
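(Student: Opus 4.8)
The plan is to specialize Theorem~\ref{t3.1} to one-dimensional $\cW$, push through the boundary-value bookkeeping begun in the discussion preceding the statement, identify the resulting relations as separated or coupled by comparison with Theorem~\ref{t2.10}, and finally read off the ordering assertions from the monotonicity and quadratic-form identities in Theorem~\ref{t3.1}. First, by Theorem~\ref{t3.1} every nonnegative self-adjoint extension attached to a one-dimensional subspace of $\cN_0=\ker(T_{max})=\mbox{span}\{\hatt v_a,u_a\}$ is of the form $T_{\kappa,c}$ with $\cW=\cW_c$ and $B=\kappa I$, $\kappa\ge0$, and $\dom(T_{\kappa,c})$ is the three-term direct sum written out before the theorem. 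Since every element of $\dom(T_{min})$ has all four generalized boundary values equal to zero (the characterization of $\dom(T_{min})$ recorded after Theorem~\ref{t2.10}), membership of $g\in\dom(T_{max})$ in $\dom(T_{\kappa,c})$ is equivalent to the vector $\big(\wti g(a),\wti g^{\,\prime}(a),\wti g(b),\wti g^{\,\prime}(b)\big)$ lying in the two-dimensional span of the corresponding vectors attached to the two functions spanning the complement of $\dom(T_{min})$ in $\dom(T_{\kappa,c})$; imposing this membership produces two linear relations, which are the sought boundary conditions.

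Next I evaluate $\wti{(\cdot)}(a),\wti{(\cdot)}^{\,\prime}(a),\wti{(\cdot)}(b),\wti{(\cdot)}^{\,\prime}(b)$ on each generator by linearity, using the values of $u_a,\hatt v_a$ in \eqref{eq:3.9}, the values of $\xi_a=(T_F)^{-1}u_a$ and $\hatt\xi_a=(T_F)^{-1}\hatt v_a$ in \eqref{B.6}--\eqref{B.7}, and the fact that $\xi_a,\hatt\xi_a\in\dom(T_F)$, which by \eqref{2.30} forces $\wti\xi_a(a)=\wti\xi_a(b)=\wti{\hatt\xi}_a(a)=\wti{\hatt\xi}_a(b)=0$. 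Three features then emerge. For $c=\infty$ both generators $\kappa\xi_a+u_a$ and $\hatt\xi_a$ have vanishing generalized function value at $a$ (here $\wti u_a(0,a)=0$), so $\wti g(a)\equiv0$, i.e.\ $\al=\pi$; computing $\wti g(b)$ and $\wti g^{\,\prime}(b)$ and matching $\wti g^{\,\prime}(b)=\cot(\be)\wti g(b)$ with \eqref{2.25} gives \eqref{3.42}. For general $c$ the generator $\overline c\,\hatt\xi_a\|\hatt v_a\|_{L_r^2}^{-2}-\xi_a\|u_a\|_{L_r^2}^{-2}$ has vanishing generalized function value at both endpoints, while $G_1:=\kappa(\hatt\xi_a+c\xi_a)+\hatt v_a+cu_a$ satisfies $\wti G_1(a)=\wti{\hatt v}_a(0,a)+c\,\wti u_a(0,a)=1$; hence $\wti g(a)$ equals the coefficient of $G_1$ and $\wti g(b)=\wti g(a)\big(\wti{\hatt v}_a(0,b)+c\,\wti u_a(0,b)\big)$, which already shows $R_{12}=0$ in \eqref{2.26}, and then $\det R=1$ forces $R_{22}=1/R_{11}$. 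Finally, the coefficient of the second generator in the $\wti g^{\,\prime}(a)$-relation equals $\overline c+\wti{\hatt v}_a(0,b)/\wti u_a(0,b)=\overline c-c_F$, nonzero precisely when $c\neq c_F$ (since $c_F$ is real, the solutions being real-valued); in that case one solves for that coefficient and substitutes into the $\wti g^{\,\prime}(b)$-relation to obtain exactly the two displayed coupled equations, while $c=c_F$ collapses them to $\wti g(b)\equiv0$ together with \eqref{3.43} after matching $\wti g^{\,\prime}(a)=-\cot(\al)\wti g(a)$ with \eqref{2.25}.

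For the ordering statements, ``$c=\hat c,\ 0\le\kappa\le\hat\kappa\ \Rightarrow\ T_{\kappa,c}\le T_{\hat\kappa,\hat c}$'' is the monotonicity clause of Theorem~\ref{t3.1} with $\wti\cW=\cW$. Conversely, Theorem~\ref{t3.1} gives $\dom\big(T_{\kappa,c}^{1/2}\big)=\dom\big(T_F^{1/2}\big)\dotplus\cW_c$ with the sum direct, whence in particular $\cN_0\cap\dom\big(T_F^{1/2}\big)=\{0\}$; if $T_{\kappa,c}\le T_{\hat\kappa,\hat c}$ then $\dom\big(T_F^{1/2}\big)\dotplus\cW_{\hat c}\subseteq\dom\big(T_F^{1/2}\big)\dotplus\cW_c$, and writing $w\in\cW_{\hat c}$ as $u+v$ with $u\in\dom\big(T_F^{1/2}\big)$, $v\in\cW_c$ forces $u=w-v\in\cN_0\cap\dom\big(T_F^{1/2}\big)=\{0\}$, so $\cW_{\hat c}\subseteq\cW_c$; both being one-dimensional yields $\cW_{\hat c}=\cW_c$, i.e.\ $c=\hat c$, after which the norm identity of Theorem~\ref{t3.1} reduces the inequality to $\kappa\le\hat\kappa$. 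The same identities give $T_{B,\cN_0}\le T_{\kappa,c}$ with the form-domain inclusion automatic (as $\cW_c\subseteq\cN_0$) and the norm inequality equivalent to $\langle g,Bg\rangle_{L_r^2}\le\kappa\|g\|_{L_r^2}^2$ for all $g\in\cW_c$; inserting the generator of $\cW_c$ together with the defining relations of $B$ and the orthogonality $\hatt v_a\perp u_a$ yields the stated conditions.

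The main obstacle is the linear-algebra bookkeeping of the second paragraph: verifying that the two relations obtained after eliminating the span-parameters simplify to precisely the displayed coupled equations, and that the only values of $c$ for which this elimination degenerates are exactly $c=\infty$ and $c=c_F$ — equivalently, that the coefficient of the second generator in the $\wti g^{\,\prime}(a)$-relation vanishes exactly at $c=c_F$. Everything else is a routine application of Theorems~\ref{t3.1} and \ref{t2.10} together with the boundary-value formulas \eqref{eq:3.9}, \eqref{B.6}, \eqref{B.7}.
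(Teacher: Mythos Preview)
Your proposal is correct and follows essentially the same route as the paper: the paper's proof is the computation embedded in the discussion immediately preceding the theorem, and you reproduce exactly that boundary-value bookkeeping (using \eqref{eq:3.9}, \eqref{B.6}, \eqref{B.7} on the two generators of $\dom(T_{\kappa,c})/\dom(T_{min})$), the identification of the degenerate values $c=\infty$ and $c=c_F$, and the appeal to Theorem~\ref{t3.1} for the ordering. Your treatment of the ``only if'' direction of $T_{\kappa,c}\le T_{\hat\kappa,\hat c}\Rightarrow c=\hat c$ via $\cN_0\cap\dom(T_F^{1/2})=\{0\}$ is actually more explicit than the paper, which simply asserts the equivalence by reference to Theorem~\ref{t3.1}.
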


Note that the separated boundary conditions can be easily written as admissible ranges and motivates $\pi$ for the Friedrichs extensions. In particular, one obtains
\begin{equation}\label{3.47}
\alpha=\pi,\ \beta\in\left[\cot^{-1}\left(\frac{\wti u^{\,\prime}_a(0,b)}{\wti u_a(0,b)}\right),\pi\right],\quad \text{or}\quad 
\alpha\in\left[\cot^{-1}\left(\frac{\wti{\hatt u}_a(0,b)}{\wti u_a(0,b)}\right),\pi\right],\ \beta=\pi,
\end{equation}
where we included the Friedrichs extension and used that (see \eqref{B.3.8}, \eqref{eq:3.9})
\begin{equation}
\frac{\wti{\hatt v}_a(0,b)}{\wti u_a(0,b)}-\wti{\hatt v}_a^{\,\prime}(0,a)=\frac{\wti{\hatt u}_a(0,b)}{\wti u_a(0,b)}.
\end{equation}

\subsubsection{The case \texorpdfstring{$\dim(\mathcal{W})=0$}{dim0}}
The last possibility is that $\dim(\mathcal{W})=0$, in which case $\mathcal{W}^\perp\cap\mathcal{N}_{0}=\mathcal{N}_0$. The unique self-adjoint extension corresponding to $\mathcal{W}=\{0\}$ is, of course, the Friedrichs extension, $T_F$, whose domain is given by (see \eqref{2.30})
\begin{equation}
\dom(T_F)=\dom(T_{min})\dot{+}\mbox{span}\{\xi_a,\hatt{\xi}_a\}=\{g\in\dom(T_{max})\:|\:\wti{g}(a)=\wti{g}(b)=0\}.
\end{equation}
By Theorem \ref{t3.1}, the choice $\mathcal{W}=\{0\}$ implies that $T_F$ is larger than any other nonnegative self-adjoint extension, that is, $\hatt{T}\leq T_F,$
for any other nonnegative self-adjoint extension $\hatt{T}$ of $T_{\min}$ as were described before.

\subsection{Nonnegative extensions for a fixed boundary condition}\label{subnonneg}

We now study nonnegative self-adjoint extensions of $T_{min}$ whenever not only $T_{min}\geq \varepsilon I$ for some $\varepsilon>0$, but one also has an extension with a fixed separated boundary condition at one endpoint, and Dirichlet-type at the other, is strictly positive. In the next section, we will apply the results proven here to Sturm--Liouville operators with symmetric coefficient functions.

Throughout this subsection we make the following assumption:

\begin{hypothesis}\label{h3.1}
Assume Hypothesis \ref{h2.1}, that $\tau$ is limit circle at both endpoints, and that for fixed $\beta'\in(0,\pi]$, $T_{\pi,\beta'}\geq \varepsilon I$ for some $\varepsilon>0$.
\end{hypothesis}

We now study nonnegative self-adjoint extensions of $T_{min,\beta'}$, that is, we fix the separated $\beta'$ boundary condition at $x=b$ (see \eqref{2.25}) and study extensions of the associated minimal operator,
\begin{align}
& T_{min,\be '} f = \tau f, \notag\\
& f \in \dom(T_{min,\be '})=\big\{g\in\dom(T_{max}) \, \big| \, \wti g(a)= \wti g^{\, \prime}(a)=0;  \\ 
& \hspace*{6.0cm} \, \wti g(b)\cos(\be')- \wti g^{\, \prime}(b)\sin(\be') = 0 \big\}.    \notag
\end{align}
Note that the Friedrichs extension of $T_{min,\beta'}$ is exactly $T_{\pi,\beta'}$, and $T_{min,\beta'}\geq\varepsilon I$ by assumption. For details regarding the case when $x=b$ is instead in the limit point case, see Remark \ref{Rem:3.3} $(iv)$.

Notice that by fixing the boundary condition at $x=b$, the minimal operator $T_{\min,\be '}$ now has deficiency index 1. 
From the fact that the deficiency index is equal to 1 and that $T_{min,\be'}\geq \varepsilon I$ for some $\varepsilon>0$, it immediately follows that $\dim(\mathcal{N}_0)=1$. Hence, in the characterization of all nonnegative self-adjoint extensions of $T_{min,\be'}$ in Theorem \ref{t3.1}, there are only two possibilities: either $\mathcal{W}=\{0\}$ or $\mathcal{W}=\mathcal{N}_0$. In the first case, $\mathcal{W}=\{0\}$, we obtain the Friedrichs extension $(T_{min,\beta'})_F$, which is given by $T_{\pi,\beta'}$. Hence, we consider the case $\mathcal{W}=\mathcal{N}_0$ from now on. In this case, by Theorem \ref{t3.1}, all nonnegative self-adjoint extensions of $T_{min,\be'}$ with $\mathcal{W}=\mathcal{N}_0$ are parametrized by nonnegative self-adjoint operators $B:\mathcal{W}=\mathcal{N}_0\rightarrow\mathcal{N}_0$. Since $B$ acts in a one-dimensional space, it is a multiplication by a nonnegative scalar $B\geq 0$. Thus, all other nonnegative self-adjoint extensions $(T_{min,\be'})_{B,\mathcal{N}_0}$ of $T_{min,\be'}$ that are not equal to its Friedrichs extension, have domain given by
\begin{equation}
\dom((T_{min,\be'})_{B,\mathcal{N}_0})=\dom(T_{min,\be'})\dot{+}\mbox{span}\{(T_{min,\beta'})_F^{-1}B\eta_{\be'}+\eta_{\be'}\},
\end{equation}
where, \emph{a priori}, $\eta_{\be'}$ is any vector that spans $\mathcal{N}_0=\ker(T_{min,\be'}^*)$. Next, we make a  specific choice for $\eta_{\be'}$.

For the fixed $\beta'$ in Hypothesis \ref{h3.1}, choose any principal and nonprincipal solutions of $\tau u=0$ at $x=a$ that are solutions on $(a,b)$ with Wronskian 1 and define
\begin{equation}
\eta_{\beta'}(x)=\hatt u_a(0,x)-\frac{\cos(\beta')\wti{\hatt u}_a(0,b)-\sin(\beta')\wti{\hatt u}_a^{\,\prime}(0,b)}{\cos(\beta')\wti{u}_a(0,b)-\sin(\beta')\wti{u}_a^{\,\prime}(0,b)} u_a(0,x),\quad x\in(a,b).
\end{equation}
Note that Hypothesis \ref{h3.1} guarantees that the denominator is not zero as otherwise $0$ would be an eigenvalue of $T_{\pi,\beta'}$, so that by construction, $\eta_{\beta'}$ satisfies the $\beta'$ boundary condition at $x=b$ and $\tau\eta_{\beta'}=0$, that is, $\eta_{\beta'}\in\ker(T_{max,\beta'})$. Furthermore, defining the generalized boundary values at $x=a$ in \eqref{2.20} and \eqref{2.21} using these choices of $u_a,$ $\hatt u_a$, one readily verifies that $\wti\eta_{\beta'}(a)=1$ and
\begin{equation}
\wti\eta_{\beta'}^{\,\prime}(a)=-\frac{\cos(\beta')\wti{\hatt u}_a(0,b)-\sin(\beta')\wti{\hatt u}_a^{\,\prime}(0,b)}{\cos(\beta')\wti{u}_a(0,b)-\sin(\beta')\wti{u}_a^{\,\prime}(0,b)}.
\end{equation}

Now to find an expression for $\xi_{\beta'}(x):=(T_F)^{-1}(\eta_{\beta'})(x)$ in terms of principal and nonprincipal solutions, we note that $\xi_{\beta'}$ must satisfy the $\beta'$ boundary condition at $x=b$, $\wti\xi_{\beta'}(a)=0$, and
\begin{equation}
\tau \xi_{\beta'}(x)=\eta_{\beta'}(x),\quad x\in(a,b).
\end{equation}
Using the variation of constants formula allows one to construct the representation
\begin{align}
\xi_{\beta'}(x)&=\hatt u_a(0,x)\int_a^x u_a(0,t)\eta_{\beta'}(t)r(t)dt \notag\\
&\quad + u_a(0,x)\bigg(\int_x^b \hatt u_a(0,t)\eta_{\beta'}(t)r(t)dt\\
&\hspace{2.5cm} -\frac{\cos(\beta')\wti{\hatt u}_a(0,b)-\sin(\beta')\wti{\hatt u}_a^{\,\prime}(0,b)}{\cos(\beta')\wti{u}_a(0,b)-\sin(\beta')\wti{u}_a^{\,\prime}(0,b)}\int_a^b u_a(0,t)\eta_{\beta'}(t)r(t)dt\bigg). \no
\end{align}
One readily verifies that all of the required properties above (boundary conditions and differential equation) are satisfied. In addition, direct calculation yields
\begin{equation}
\wti\xi_{\beta'}^{\,\prime}(a)=\int_a^b \eta_{\beta'}^2(t)r(t)dt=\norm{\eta_{\beta'}}^2_{L^2_r}>0.
\end{equation}
Therefore, letting $h_B(x)=B\xi_{\be'}(x)+\eta_{\be'}(x)$, we can write
\begin{equation}
\dom((T_{min,\beta'})_{B,\mathcal{N}_0})=\dom(T_{min,\beta})\dot{+}\mbox{span}\{h_B\}.
\end{equation}

To express $\dom((T_{min,\beta'})_{B,\mathcal{N}_0})$ in terms of boundary conditions, that is, to find $\alpha$ such that $(T_{min,\beta'})_{B,\mathcal{N}_0}=T_{\alpha,\be'}$, we need to understand the behavior of $h_B$ at the left endpoint $a$:
\begin{align}
\wti h_B(a)&=1,\\
\wti h_B^{\,\prime}(a)&=B\norm{\eta_{\beta'}}^2_{L^2_r}-\frac{\cos(\beta')\wti{\hatt u}_a(0,b)-\sin(\beta')\wti{\hatt u}_a^{\,\prime}(0,b)}{\cos(\beta')\wti{u}_a(0,b)-\sin(\beta')\wti{u}_a^{\,\prime}(0,b)},\quad B\geq0. \notag
\end{align}
Hence by considering the equation defining extensions satisfying $\alpha$ boundary conditions at $x=a$ in \eqref{2.25}, we conclude that any $\alpha\in(0,\pi)$ that satisfies
\begin{equation}\label{3.13}
\cos(\alpha)+\left(B\norm{\eta_{\beta'}}^2_{L^2_r}-\frac{\cos(\beta')\wti{\hatt u}_a(0,b)-\sin(\beta')\wti{\hatt u}_a^{\,\prime}(0,b)}{\cos(\beta')\wti{u}_a(0,b)-\sin(\beta')\wti{u}_a^{\,\prime}(0,b)}\right) \sin(\alpha)=0,
\end{equation}
for $B\geq0$, defines a nonnegative extension that will add the appropriate functions, namely, will include $h_B$ in their domain. Rearranging \eqref{3.13} yields for $\alpha\in(0,\pi)$,
\begin{equation}\label{3.14}
\alpha=\alpha(B)=\cot^{-1}\left(\frac{\cos(\beta')\wti{\hatt u}_a(0,b)-\sin(\beta')\wti{\hatt u}_a^{\,\prime}(0,b)}{\cos(\beta')\wti{u}_a(0,b)-\sin(\beta')\wti{u}_a^{\,\prime}(0,b)}-B\norm{\eta_{\beta'}}^2_{L^2_r}\right),\ B\geq0,
\end{equation}
that is, $(T_{min,\beta'})_{B,\mathcal{N}_0}=T_{\alpha(B),\be'}$.

This allows one to see that the case $B=\infty$ corresponds to the Friedrichs extension (i.e., $\alpha=\pi$) of $T_{min,\beta'}$ and the case $B=0$ corresponds to the Krein--von Neumann extension of $T_{min,\beta'}$ at the beginning of the admissible $\alpha$-range. 

Moreover, the parameter $B\geq0$ is known to order the extensions (see Theorem \ref{t3.1}), that is,  $0\leq B_1\leq B_2$ implies that $(T_{min,\beta'})_{B_1,\mathcal{N}_0}\leq (T_{min,\beta'})_{B_2,\mathcal{N}_0}\leq (T_{min,\beta'})_{F} $. From \eqref{3.14} it follows that $\alpha$ is monotone increasing in $B$, so $B_1\leq B_2$ implies $\alpha(B_1)\leq \alpha(B_2)$, so the extensions defined by these choices of $\alpha$ inherit that same ordering when moving from left to right in the admissible $\alpha$-range.

We summarize these results in the following theorem.

\begin{theorem}
Assume Hypothesis \ref{h3.1}. Then the generalized boundary conditions at $x=a$ for $g\in\dom(T_{max})$ describing nonnegative self-adjoint extensions of $T_{min,\beta'}$ are given by \eqref{2.25} with
\begin{equation}\label{3.15}
\alpha\in\left[\cot^{-1}\left(\frac{\cos(\beta')\wti{\hatt u}_a(0,b)-\sin(\beta')\wti{\hatt u}_a^{\,\prime}(0,b)}{\cos(\beta')\wti{u}_a(0,b)-\sin(\beta')\wti{u}_a^{\,\prime}(0,b)}\right),\pi\right].
\end{equation}
Moreover,
\begin{equation} \label{eq:3.19}
\cot^{-1}\left(\frac{\cos(\beta')\wti{\hatt u}_a(0,b)-\sin(\beta')\wti{\hatt u}_a^{\,\prime}(0,b)}{\cos(\beta')\wti{u}_a(0,b)-\sin(\beta')\wti{u}_a^{\,\prime}(0,b)}\right)\leq \alpha_1\leq \alpha_2 \quad \Leftrightarrow \quad 0\leq T_{\alpha_1,\be'}\leq T_{\alpha_2,\be'}.
\end{equation}
\end{theorem}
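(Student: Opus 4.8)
The plan is to assemble the ingredients established immediately before the statement. First I would note that Hypothesis \ref{h3.1} gives $(T_{min,\beta'})_F = T_{\pi,\beta'} \geq \varepsilon I$, so Theorem \ref{t3.1} applies to the symmetric operator $T = T_{min,\beta'}$; since one separated boundary condition has already been fixed at $b$, the deficiency indices satisfy $n_\pm(T_{min,\beta'}) = 1$, hence $\dim(\cN_0) = 1$. The one-to-one correspondence in Theorem \ref{t3.1} then leaves exactly two families of nonnegative self-adjoint extensions: $\cW = \{0\}$, which yields $(T_{min,\beta'})_F = T_{\pi,\beta'}$ (the endpoint $\alpha = \pi$), and $\cW = \cN_0$, where the auxiliary operator is multiplication by a scalar $B \in [0,\infty)$.

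Second, for the branch $\cW = \cN_0$ I would insert the explicit generator $\eta_{\beta'}$ and its preimage $\xi_{\beta'} = (T_F)^{-1}\eta_{\beta'}$, together with their generalized boundary values at $a$ computed above, into $h_B = B\xi_{\beta'} + \eta_{\beta'}$ to read off $\wti h_B(a) = 1$ and the stated formula for $\wti h_B^{\,\prime}(a)$; matching against the separated condition in \eqref{2.25} forces $(T_{min,\beta'})_{B,\cN_0} = T_{\alpha(B),\beta'}$ with $\alpha(B)$ as in \eqref{3.14}. Using the principal branch $\cot^{-1}\colon\bbR\to(0,\pi)$, which is strictly decreasing with limit $\pi$ at $-\infty$, and the fact (already noted) that the denominator occurring in the argument of \eqref{3.14} is nonzero by Hypothesis \ref{h3.1}, I would observe that $B\mapsto\alpha(B)$ is a strictly increasing bijection of $[0,\infty]$ onto the interval in \eqref{3.15}, with $B=0$ giving the Krein--von Neumann endpoint and $B=\infty$ matching the $\cW=\{0\}$ Friedrichs case at $\alpha=\pi$. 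Since Theorem \ref{t3.1} exhausts all nonnegative self-adjoint extensions, this establishes \eqref{3.15}.

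Third, for the equivalence \eqref{eq:3.19} I would argue both directions. The forward direction is immediate: scalars are totally ordered, so $\alpha_1\leq\alpha_2$ within the admissible range translates (through the increasing bijection $\alpha(\cdot)$, with $\alpha=\pi$ read as $B=\infty$) into $B_1\leq B_2$, whence $(T_{min,\beta'})_{B_1,\cN_0}\leq(T_{min,\beta'})_{B_2,\cN_0}$ by Theorem \ref{t3.1}, together with the fact that every nonnegative extension is dominated by $(T_{min,\beta'})_F$. For the converse, assuming $0\leq T_{\alpha_1,\beta'}\leq T_{\alpha_2,\beta'}$ (so that, by the first part, both $\alpha_i$ lie in the range of \eqref{3.15}), I would invoke the quadratic-form identity of Theorem \ref{t3.1}: since $B$ acts on the one-dimensional space $\cN_0$, one has $\dom(B^{1/2})=\cN_0$ for every finite $B$, so both form domains equal $\dom((T_F)^{1/2})\dotplus\cN_0$ and the operator inequality reduces to $\norm{B_1^{1/2}g}_{L^2_r}^2\leq\norm{B_2^{1/2}g}_{L^2_r}^2$ for all $g\in\cN_0$, i.e.\ the scalar inequality $B_1\leq B_2$; monotonicity of $\alpha(\cdot)$ then gives $\alpha_1\leq\alpha_2$ (the case $\alpha_2=\pi$ being trivial).

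The only step requiring real care is this converse: one must rule out any comparison $T_{\alpha_1,\beta'}\leq T_{\alpha_2,\beta'}$ with $\alpha_1>\alpha_2$. In contrast to the $\dim(\cW)=2$ situation discussed in Remark \ref{r3.5}\,$(iv)$, where distinct nonnegative extensions can fail to be comparable, here the reduction furnished by Theorem \ref{t3.1} converts every comparison into a comparison of genuine real scalars, so the whole family is automatically totally ordered by the single parameter $B$ (equivalently $\alpha$). Making this reduction precise --- in particular identifying the two form domains as the same space so that the form inequality is meaningful --- is the technical heart of the argument.
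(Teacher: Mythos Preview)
Your proposal is correct and follows essentially the same route as the paper, which proves the theorem through the discussion immediately preceding its statement (constructing $\eta_{\beta'}$, $\xi_{\beta'}$, and $h_B$, then matching boundary values to obtain \eqref{3.14} and reading off the monotone bijection $B\mapsto\alpha(B)$). If anything, your treatment of the converse in \eqref{eq:3.19}---reducing the operator inequality to a scalar inequality via the form identity in Theorem~\ref{t3.1}---is more explicit than the paper's, which simply invokes monotonicity of $\alpha(B)$.
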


We point out that all of these extensions are in fact strictly positive except for when $\alpha$ is chosen to be equal to the lower bound of the interval in \eqref{3.15}. This follows from general results such as \cite[Cor. 2.14]{AS80}, but can also be easily seen from setting the constant term equal to zero in the small-$z$ expansion of $F_{\a,\b'}(z)$ in \eqref{2.43}.

A few remarks are now in order. 
\begin{remark}\label{Rem:3.3}
$(i)$ Equation \eqref{3.14} (see also \eqref{3.42}, \eqref{3.43}) leads to the natural identification of the Friedrichs extension with  the choice $\alpha=\pi$ rather than the typical $\alpha=0$ in order to preserve the ordering of the extensions described in Theorem \ref{t3.1}.\\[1mm]
$(ii)$ Since the principal and nonprincipal solutions are not unique, one should study what happens to \eqref{3.15} whenever choosing different solutions. Clearly, varying the $u_b,\hatt u_b$, chosen in defining the boundary values at $x=b$ changes the boundary values given in \eqref{3.15}. Moreover, choosing a different constant multiple, $c$, of the principal solution $u_a$ will change the nonprincipal solution $\hatt u_a$ by $1/c$. The nonprincipal solution can also be changed by an additive constant multiple, $d$, of the principal solution. As a result,
\begin{align}
\begin{split}
& u_a(0,\dott) \longrightarrow c u_a(0,\dott) \text{ and } \hatt u_a (0, \dott) \longrightarrow \hatt u_a (0, \dott) + d u_a(0,\dott), \quad c\in\bbR\backslash\{0\},\ d  \in \bbR,\\
&\text{then } \cot^{-1}\left(\frac{\cos(\beta')\wti{\hatt u}_a(0,b)-\sin(\beta')\wti{\hatt u}_a^{\,\prime}(0,b)}{\cos(\beta')\wti{u}_a(0,b)-\sin(\beta')\wti{u}_a^{\,\prime}(0,b)}\right)\\
&\qquad\ \longrightarrow
\cot^{-1}\left(\frac{1}{c^2}\frac{\cos(\beta')\wti{\hatt u}_a(0,b)-\sin(\beta')\wti{\hatt u}_a^{\,\prime}(0,b)}{\cos(\beta')\wti{u}_a(0,b)-\sin(\beta')\wti{u}_a^{\,\prime}(0,b)}+d\right), \quad c\in\bbR\backslash\{0\},\ d  \in \bbR.
\end{split}
\end{align}
This shows that adjusting $u_a,\hatt u_a$, effectively adjusts the admissible choices of $\alpha$ above. In particular, one can always choose
\begin{equation}
d=-\frac{1}{c^2}\frac{\cos(\beta')\wti{\hatt u}_a(0,b)-\sin(\beta')\wti{\hatt u}_a^{\,\prime}(0,b)}{\cos(\beta')\wti{u}_a(0,b)-\sin(\beta')\wti{u}_a^{\,\prime}(0,b)},
\end{equation}
to make the $\alpha$-range simply become $[\pi/2,\pi]$ at the expense of a likely much more complicated nonprincipal solution. Furthermore, this will probably differ from the typical choice that guarantees the boundary values coincide with the regular boundary values in parameterized examples that include regular problems as special cases, such as the classic Bessel (see Section \ref{s5}) or Jacobi differential equations.\\[1mm]
$(iii)$ One could start instead by defining $\eta_{\beta'}(x)$ in terms of the principal and nonprincipal solutions at $x=b$ by letting
\begin{equation}\label{3.64}
\eta_{\beta'}(x)=\sin(\beta')\hatt u_b(0,x)+\cos(\beta')u_b(0,x),\quad x\in(a,b).
\end{equation}
Then one can proceed as before by finding a representation for $\xi_{\beta'}(x)$ using the variation of constants formula, though the form will be more complicated now. In particular, a careful analysis shows that the analog of \eqref{3.14} becomes, for $\alpha\in(0,\pi)$,
\begin{align}\label{3.19}
\begin{split}
\alpha=\cot^{-1}\Bigg(&-\frac{\cos(\beta')\wti{ u}^{\,\prime}_b(0,a)+\sin(\beta')\wti{\hatt u}_b^{\,\prime}(0,a)}{\cos(\beta')\wti{u}_b(0,a)+\sin(\beta')\wti{\hatt u}_b(0,a)}\\
&\quad-\frac{B\norm{\eta_{\beta'}}^2_{L^2_r}}{(\cos(\beta')\wti{u}_b(0,a)+\sin(\beta')\wti{\hatt u}_b(0,a))^2}\Bigg),\quad B\geq0.
\end{split}
\end{align}
Notice that the term multiplying $B$ in \eqref{3.19} is once again strictly negative, thus the ordering follows as before, and the term in the denominator is nonzero as otherwise $\wti\eta_{\beta'}(a)=0$ contradicting that 0 is not an eigenvalue of the Friedrichs extension. 

Letting $B=0$ in \eqref{3.19} gives an alternate representation of the Krein--von Neumann extension than before. In fact, setting $B=0$ and $\beta'=\pi$ in \eqref{3.19} gives the analog of \eqref{2.31} when replacing the limit point assumption at $x=b$ with fixing the Friedrichs extension boundary condition at $x=b$.

One can also prove this alternate representation by implementing the following identities which follow immediately from the Wronskian definition of the generalized boundary values in \eqref{2.20} and \eqref{2.21}:
\begin{equation}
\wti u_a(0,b)=-\wti u_b(0,a),\ \wti {\hatt u}_a^{\,\prime}(0,b)=-\wti{\hatt u}_b^{\,\prime}(0,a),\ \wti{\hatt u}_a(0,b)=\wti u_b^{\,\prime}(0,a),\ \wti{\hatt u}_b(0,a)=\wti u_a^{\,\prime}(0,b).
\end{equation}
Substituting these values into \eqref{3.14} now immediately yields \eqref{3.19}.\\[1mm]
$(iv)$ If one assumes instead that $x=b$ is in the limit point case, the analysis using the principal and nonprincipal solutions at $x=b$ in $(iii)$ can be applied with $\beta'=\pi$ since this is simply choosing the square integrable solution at $x=b$ for $\eta(x)$ in \eqref{3.64}. Thus whenever $b$ is in the limit point case our analysis, specifically \eqref{3.19}, recovers \eqref{2.31} as well as \cite[Thm. 2.1]{BE16}, noting that $\psi$ in \cite[Thm. 2.1]{BE16} is equal to $u_b(0,x)/\wti u_b(0,a)$ in our notation. As an explicit comparison, we point out that \cite{BE16} implicitly assumes $\wti\eta_\pi(a)=\wti u_b(0,a)=1$ whereas our analysis does not. Therefore \eqref{3.19} includes factors of $\wti u_b(0,a)$ in the denominators that do not appear when comparing to \cite[Eq. (2.1)]{BE16} (utilizing the left endpoint equation in \eqref{2.25}). \\[1mm]
$(v)$ Finally, we remark that similar results hold when considering the boundary condition at the left endpoint fixed instead. In particular, fixing $\alpha'\in(0,\pi]$, the only additional difference other than interchanging $a$ with $b$ and $\beta'$ with $\alpha'$ throughout is that $\sin(\beta')$ is replaced with $-\sin(\alpha')$ due to the signs chosen in \eqref{2.25}. This also causes a sign change in the argument of the cotangent above.
\end{remark}

We will apply the results from this section in the following one when investigating Sturm--Liouville operators with symmetric coefficient functions.

\section{An application to Sturm--Liouville operators with coefficient functions symmetric about the midpoint of the interval}\label{s4}

We now make the following additional assumptions regarding the symmetries of the coefficient functions of the Sturm--Liouville operators being considered.

\begin{hypothesis}\label{h2.13}
Assume Hypothesis \ref{h2.1}. In addition, assume that $a$ and $b$ are finite, $\tau$ is limit circle nonoscillatory at both endpoints, and suppose that the coefficient functions $p,q,r$ are symmetric about $x=(b+a)/2$ such that
\begin{equation}
p(x)=p(b-x+a),\ q(x)=q(b-x+a),\ r(x)=r(b-x+a),\quad x\in(a,b).
\end{equation}
\end{hypothesis}

Next, we return to the self-adjoint extensions defined using generalized boundary conditions in Theorem \ref{t2.10} and explore which of these conditions commute with the unitary operator, $P$, that reflects the Sturm--Liouville problem about the midpoint of the interval considered. Notice that this will be equivalent to interchanging the roles of the endpoints and negating the derivative terms in \eqref{2.25} for separated and \eqref{2.26} for coupled boundary conditions. This results in the following.

\begin{proposition}\label{t.2.14}
Assume Hypothesis \ref{h2.13}. Then the family self-adjoint extensions with separated boundary conditions that are invariant with respect to the symmetry transformation $P$ $($i.e., reflection about the midpoint of the interval$)$ are given by setting $\alpha=\beta$ in \eqref{2.25}$;$ the family with coupled boundary conditions that are invariant with respect to $P$ are given by $\eta=0$ and $R_{11}=R_{22}$ in \eqref{2.26}.
\end{proposition}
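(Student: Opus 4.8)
The plan is to implement the reflection about the midpoint as the unitary involution $P$ on $L^2((a,b);r\,dx)$, $(Pf)(x) = f(b-x+a)$, and to track exactly how $P$ acts on the generalized boundary values \eqref{2.20}--\eqref{2.21}. Under Hypothesis \ref{h2.13} a direct computation shows $\tau P = P\tau$ (the symmetry of $p,q,r$ is precisely what is needed), hence $P$ preserves $\dom(T_{min,0})$ and $\dom(T_{max})$, so $PT_{min}P^{-1}=T_{min}$, $PT_{max}P^{-1}=T_{max}$, and conjugation by $P$ permutes the self-adjoint extensions of $T_{min}$; an extension is $P$-invariant precisely when its domain is $P$-stable. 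The key auxiliary identity is
\[
W(Pf,Pg)(x) = -W(f,g)(b-x+a), \qquad x\in(a,b),
\]
which follows from $(Pf)^{[1]}(x) = -f^{[1]}(b-x+a)$ (using $p(x)=p(b-x+a)$). Since $P$ swaps the endpoints, $Pu_a(\lambda_0,\dott)$ and $P\hatt u_a(\lambda_0,\dott)$ are principal and nonprincipal solutions of $\tau u=\lambda_0 u$ at $b$; using the freedom in Theorem \ref{t2.10} to pick the solutions at $b$, I would fix $u_b(\lambda_0,\dott):=-Pu_a(\lambda_0,\dott)$ and $\hatt u_b(\lambda_0,\dott):=P\hatt u_a(\lambda_0,\dott)$, the sign being chosen so that $W(\hatt u_b,u_b)=1$ by the identity above. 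With this compatible normalization and $P^2=I$, a short Wronskian computation gives, for every $g\in\dom(T_{max})$ and $h:=Pg$,
\[
\wti h(a)=\wti g(b),\quad \wti h(b)=\wti g(a),\quad \wti h^{\,\prime}(a)=-\wti g^{\,\prime}(b),\quad \wti h^{\,\prime}(b)=-\wti g^{\,\prime}(a),
\]
the precise form of the informal ``interchange endpoints, negate derivatives'' rule.

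The separated case is then immediate. Substituting this transformation law into the two conditions defining $\dom(T_{\al,\be})$ in \eqref{2.25} shows that $P\dom(T_{\al,\be})=\dom(T_{\be,\al})$, i.e.\ $PT_{\al,\be}P^{-1}=T_{\be,\al}$. Because $(\al,\be)\mapsto T_{\al,\be}$ is injective on $(0,\pi]^2$, the extension $T_{\al,\be}$ is $P$-invariant if and only if $\al=\be$.

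For the coupled case I would write the condition in \eqref{2.26} in matrix form and push it through the transformation law. Setting $J=\diag(1,-1)$, we have $(\wti g(b),\wti g^{\,\prime}(b))^{\top}=J(\wti h(a),\wti h^{\,\prime}(a))^{\top}$ and $(\wti g(a),\wti g^{\,\prime}(a))^{\top}=J(\wti h(b),\wti h^{\,\prime}(b))^{\top}$, so $\dom(T_{\eta,R})$ is carried to the coupled domain with coupling matrix $e^{-i\eta}JR^{-1}J$, and one checks $JR^{-1}J=\bigl(\begin{smallmatrix}R_{22}&R_{12}\\R_{21}&R_{11}\end{smallmatrix}\bigr)\in SL(2,\bbR)$. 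The step I expect to be the main obstacle is putting $e^{-i\eta}JR^{-1}J$ back into the canonical form $e^{i\eta'}R'$ with $\eta'\in[0,\pi)$, $R'\in SL(2,\bbR)$, because of the $(\eta,R)\sim(\eta+\pi,-R)$ ambiguity: one gets $(\eta',R')=(0,JR^{-1}J)$ when $\eta=0$, and $(\eta',R')=(\pi-\eta,-JR^{-1}J)$ when $\eta\in(0,\pi)$. Invariance $PT_{\eta,R}P^{-1}=T_{\eta,R}$ (using injectivity of the parametrization) then splits accordingly: for $\eta=0$ it is equivalent to $R=JR^{-1}J$, i.e.\ $R_{11}=R_{22}$; for $\eta\in(0,\pi)$ it forces $\eta=\pi/2$, $R_{12}=R_{21}=0$ and $R_{11}=-R_{22}$, whence $\det R=-R_{11}^2\le 0$, contradicting $\det R=1$, so this branch is empty. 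Combining the cases yields exactly $\eta=0$ with $R_{11}=R_{22}$, which is the assertion.
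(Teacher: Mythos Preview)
Your proof is correct and follows essentially the same approach as the paper: both reduce $P$-invariance to the matrix identity $e^{i\eta}R = J e^{-i\eta}R^{-1}J$ with $J=\diag(1,-1)$ and read off $\eta=0$, $R_{11}=R_{22}$. Your treatment is more explicit in two respects---you spell out the compatible choice $u_b=-Pu_a$, $\hatt u_b=P\hatt u_a$ needed to derive the ``swap endpoints, negate derivatives'' rule rigorously, and you carry out the case analysis for $\eta\in(0,\pi)$ (leading to the $\det R\le 0$ contradiction) that the paper compresses into a single assertion---but the underlying argument is the same.
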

\begin{proof}
The separated boundary condition statement follows immediately from interchanging the endpoints and negating the derivative terms in \eqref{2.25}, then comparing the new domain to the original domain.

For coupled boundary conditions, following the same procedure leads to comparing the following to the original domain description in \eqref{2.26}:
\begin{equation}
\begin{pmatrix} 1 & 0\\ 0 & -1\end{pmatrix} \begin{pmatrix} \wti g(b)\\ \wti g^{\, \prime}(b)\end{pmatrix} =\begin{pmatrix} \wti g(b)\\ -\wti g^{\, \prime}(b)\end{pmatrix} 
= e^{-i\eta}R^{-1} \begin{pmatrix} 1 & 0\\ 0 & -1\end{pmatrix} \begin{pmatrix}
\wti g(a)\\ \wti g^{\, \prime}(a)\end{pmatrix}. 
\end{equation}
Hence, the problem reduces to considering when
\begin{equation}
e^{i\eta}R=\begin{pmatrix} 1 & 0\\ 0 & -1\end{pmatrix}e^{-i\eta}R^{-1} \begin{pmatrix} 1 & 0\\ 0 & -1\end{pmatrix},\quad \eta\in[0,\pi),\ R\in SL(2,\bbR),
\end{equation}
which yields the requirements $\eta=0$ and $R_{11}=R_{22}$.
\end{proof}
We would like to point out that our choice of the negative sign on the generalized derivative at $b$ in \eqref{2.25} guarantees $\alpha=\beta$ in the previous theorem. If a different parameterization were chosen, a negative would need to be introduced to account for the symmetries in the derivative.

Notice that many classical boundary conditions lead to self-adjoint extensions of the form given in Proposition \ref{t.2.14}: the Friedrichs extension ($T_F=T_{\pi,\pi}$), the periodic extension ($T_{Per}=T_{0,I_2}$), and the antiperiodic extension ($T_{Anti}=T_{0,-I_2}$). On the other hand, some extensions will only be of this form under additional assumptions. For example, when considering the Krein--von Neumann extension described in Theorem \ref{t2.12} $(ii)$, one would need to additionally assume that $\wti{\hatt u}_{a}(0,b) = \wti u_{a}^{\, \prime}(0,b)$, which is not generically true. But if the coefficient functions are themselves symmetric, this is always true by \eqref{4.29}, showing the Krein--von Neumann extension is of this form under Hypothesis \ref{h2.13} as expected from the general theory.

One can now prove the following decomposition when considering separated boundary conditions that are invariant with respect to reflection about the midpoint of the interval. Note that this decomposition allows one to readily use the results from Section \ref{subnonneg} to explicitly characterize all nonnegative self-adjoint extensions.

\begin{theorem}\label{t2.16}
Assume Hypothesis \ref{h2.13}. Let $T_{\alpha,\alpha}$ denote the $(\alpha,\alpha)$-extension of the minimal operator associated with $\tau$, let $T^{(a,(b+a)/2)}_{\alpha,\pi}$ denote the $\alpha$--Dirichlet-type extension of the minimal operator associated with $\tau\big|_{(a,(b+a)/2)}$, and let $T^{(a,(b+a)/2)}_{\alpha,\pi/2}$ denote the $\alpha$--Neumann-type extension of the minimal operator associated with $\tau\big|_{(a,(b+a)/2)}$. Then 
$T_{\alpha,\alpha}$ is unitarily equivalent to the direct sum 
\begin{equation} \label{eq:4.12.0}
T^{(a,(b+a)/2)}_{\alpha,\pi}\oplus T^{(a,(b+a)/2)}_{\alpha,\pi/2}
\end{equation}
in $L^2((a,(b+a)/2),rdx)\oplus L^2((a,(b+a)/2),rdx)$, where $\alpha\in(0,\pi]$.
In particular,
\begin{equation}\label{4.12}
\sigma(T_{\alpha,\alpha})=\sigma(T^{(a,(b+a)/2)}_{\alpha,\pi})\cup\sigma(T^{(a,(b+a)/2)}_{\alpha,\pi/2}),\quad \alpha\in(0,\pi].
\end{equation}

Moreover, if $T_{min}\geq\varepsilon I$ for some $\varepsilon>0$, then $T_{\a,\a}$ is nonnegative for $\alpha\in[\nu,\pi]$ $($in fact, positive for $\alpha\in(\nu,\pi]$$)$ where
\begin{equation}\label{4.14}
\nu=\cot^{-1}\left(\frac{\hatt u^{[1]}_a(0,(b+a)/2)}{u_a^{[1]}(0,(b+a)/2)}\right).
\end{equation}
Finally, 
\begin{equation} \label{eq:4.15}
\quad \nu\leq\alpha_1\leq \alpha_2 \quad\mbox{if and only if}\quad 0\leq T_{\alpha_1,\alpha_1}\leq T_{\alpha_2,\alpha_2}.
\end{equation}
\end{theorem}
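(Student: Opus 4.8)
The plan is to exploit the reflection symmetry of Hypothesis~\ref{h2.13} to write $T_{\a,\a}$ as a direct sum of two Sturm--Liouville operators on the half-interval $(a,m)$, $m:=(b+a)/2$, and then to deduce all three assertions from the fixed-boundary-condition analysis of Section~\ref{subnonneg} applied there. First, let $P$ be the reflection $(Pf)(x)=f(b+a-x)$; it is a self-adjoint unitary on $L^2((a,b);rdx)$ commuting with $\tau$, and by Proposition~\ref{t.2.14} it leaves $\dom(T_{\a,\a})$ invariant, hence commutes with $T_{\a,\a}$. Thus the even and odd subspaces $\cH_\pm=\ran\big(\tfrac12(I\pm P)\big)$ reduce $T_{\a,\a}$ and $T_{\a,\a}=\big(T_{\a,\a}|_{\cH_+}\big)\oplus\big(T_{\a,\a}|_{\cH_-}\big)$. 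I would identify each summand by restriction: $U_\pm\colon f\mapsto\sqrt2\,f|_{(a,m)}$ is unitary from $\cH_\pm$ onto $L^2((a,m);rdx)$, and since $m$ is an interior (hence regular) point, $f,f^{[1]}$ are continuous at $m$ for every $f\in\dom(T_{max})$; an even $f$ then has $f^{[1]}(m)=0$ and an odd $f$ has $f(m)=0$. Choosing, as one may since $m$ is regular, the solutions defining the generalized boundary values at $m$ so that $\wti g(m)=g(m)$ and $\wti g^{\,\prime}(m)=g^{[1]}(m)$, this gives $U_+\big(T_{\a,\a}|_{\cH_+}\big)U_+^{-1}\subseteq T^{(a,m)}_{\a,\pi/2}$ and $U_-\big(T_{\a,\a}|_{\cH_-}\big)U_-^{-1}\subseteq T^{(a,m)}_{\a,\pi}$; since both sides of each inclusion are self-adjoint (the left because $\cH_\pm$ reduces the self-adjoint $T_{\a,\a}$, the right by Theorem~\ref{t2.10}), the inclusions are equalities, which is \eqref{eq:4.12.0}, and \eqref{4.12} follows from $\sigma(A\oplus B)=\sigma(A)\cup\sigma(B)$.

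For the nonnegativity statements I would first apply the decomposition with $\a=\pi$ to the strictly positive Friedrichs extension $T_F=T_{\pi,\pi}$ to get $T^{(a,m)}_{\pi,\pi}\ge\varepsilon I$ and $T^{(a,m)}_{\pi,\pi/2}\ge\varepsilon I$, so that Hypothesis~\ref{h3.1} holds on $(a,m)$ with $\b'=\pi$ and with $\b'=\pi/2$. The theorem of Section~\ref{subnonneg}, applied to $T^{(a,m)}_{\a,\pi}$ and $T^{(a,m)}_{\a,\pi/2}$ (with the regular-endpoint convention at $m$, so the ranges are those of \eqref{3.15}), then shows that the sets of $\a\in(0,\pi]$ making these nonnegative are $[\nu_D,\pi]$ and $[\nu_N,\pi]$ with $\nu_D=\cot^{-1}\!\big(\hatt u_a(0,m)/u_a(0,m)\big)$ and $\nu_N=\cot^{-1}\!\big(\hatt u_a^{[1]}(0,m)/u_a^{[1]}(0,m)\big)$, each extension being strictly positive on the corresponding open interval. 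By the direct sum $T_{\a,\a}\ge0$ iff $\a\ge\max(\nu_D,\nu_N)$, so it remains to show $\nu_D\le\nu_N$, whence $\nu:=\nu_N$ is as in \eqref{4.14}. Here I would use disconjugacy of $\tau u=0$ on $(a,b)$ (a consequence of strict positivity, which also forces $u_a(0,\dott)>0$ on $(a,b)$): the reflected $u_b(0,x):=u_a(0,b+a-x)$ is a positive principal solution at $b$, so $v:=u_a+u_b$ is a positive even solution with $v^{[1]}(m)=0$, while $w:=u_b-u_a$ is a solution with $w(m)=0$ that is positive near $a$ and, having no zero in $(a,m)$ by disconjugacy, positive on $(a,m)$. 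Restricted to $(a,m)$, $v$ and $w$ are positive zero-energy solutions satisfying respectively the Neumann- and Dirichlet-type conditions at $m$, hence ground states of $T^{(a,m)}_{\nu_N,\pi/2}$ and $T^{(a,m)}_{\nu_D,\pi}$; comparing their generalized boundary values at $a$ (which differ only through $\wti u_a(a)=0$ and $\wti u_a^{\,\prime}(a)=W(\hatt u_a,u_a)(a)=1$) yields $\cot\nu_D-\cot\nu_N=2/\wti u_b(a)>0$, i.e.\ $\nu_D<\nu_N$. This gives nonnegativity on $[\nu,\pi]$, positivity on $(\nu,\pi]$, and \eqref{4.14}.

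For the ordering \eqref{eq:4.15}, the partial order of Section~\ref{s3} is preserved by unitary equivalence and respects direct sums, so $0\le T_{\a_1,\a_1}\le T_{\a_2,\a_2}$ iff $0\le T^{(a,m)}_{\a_1,\pi}\le T^{(a,m)}_{\a_2,\pi}$ and $0\le T^{(a,m)}_{\a_1,\pi/2}\le T^{(a,m)}_{\a_2,\pi/2}$, which by the ordering statement \eqref{eq:3.19} of the Section~\ref{subnonneg} theorem holds iff $\nu_D\le\a_1\le\a_2$ and $\nu_N\le\a_1\le\a_2$, i.e.\ iff $\nu\le\a_1\le\a_2$.

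The main obstacle is the inequality $\nu_D\le\nu_N$: one has to decide which of the two half-interval pieces carries the binding constraint, and although the Wronskian normalization $W(\hatt u_a,u_a)\equiv1$ makes the sign of $\cot\nu_D-\cot\nu_N$ transparent once everything is pinned down, getting that sign correct is precisely where strict positivity (through positivity and disconjugacy of the principal solution) together with the coefficient symmetry of Hypothesis~\ref{h2.13} are genuinely needed; the rest is bookkeeping about restricting and reflecting domains.
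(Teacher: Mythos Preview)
Your argument is correct and follows the same architecture as the paper's proof: reduce $T_{\a,\a}$ via the reflection $P$ to a direct sum on the half interval, then invoke the fixed-boundary-condition results of Section~\ref{subnonneg}. Your shortcut of proving only one domain inclusion and closing it by self-adjointness is a clean variant of the paper's explicit two-sided inclusion check.

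The one substantive difference is how you establish that the Neumann piece is the binding constraint, i.e., $\nu_D\le\nu_N$. The paper dispatches this in one line by invoking Remark~\ref{r3.5}\,$(iv)$: since $m=(b+a)/2$ is regular, $T^{(a,m)}_{\a,\pi}$ is the Friedrichs extension of the operator with the $\a$-condition fixed at $a$, hence dominates $T^{(a,m)}_{\a,\pi/2}$, so nonnegativity of the latter implies nonnegativity of the former. You instead construct the zero-energy ground states $v=u_a+u_b$ and $w=u_b-u_a$ explicitly, use disconjugacy to see they are positive on $(a,m)$, and compute $\cot\nu_D-\cot\nu_N=2/\wti u_b(a)>0$ from their boundary data at $a$. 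Your route is longer but entirely self-contained (no appeal to the ``Friedrichs is largest'' fact) and yields the strict inequality $\nu_D<\nu_N$ together with an exact formula for the gap; the paper's route is softer and quicker. Both are valid.
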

\begin{proof}
Firstly, note that $P$ has eigenvalues $\pm1$ with the corresponding eigenspaces being the elements of $L^2((a,b),rdx)$ that are symmetric/antisymmetric with respect to reflection about the midpoint $(a+b)/2$ of the interval $(a,b)$. In other words, we have
\begin{equation}
\ker(P\mp I)=\{f\pm Pf | f\in L^2((a,b);rdx)\}.
\end{equation}
Next, by the assumptions made on the coefficient functions $p,q,r$, the operators $T_{\alpha,\alpha}$ and $P$ commute, that is, if $f\in\dom(T_{\alpha,\alpha})$, then so is $Pf$ and we have
\begin{equation}
T_{\alpha,\alpha}Pf=PT_{\alpha,\alpha}f
\end{equation}
for every $f\in\dom(T_{\alpha,\alpha})$. The eigenspaces $\ker(P\mp I)$ therefore reduce the operator $T_{\alpha,\alpha}$, which can be decomposed into 
\begin{equation}\label{4.10a}
T_{\alpha,\alpha}=T_{\alpha,\alpha}^+\oplus T_{\alpha,\alpha}^-,
\end{equation}
where $T_{\alpha,\alpha}^\pm:=T_{\alpha,\alpha}\big|_{\ker(P\mp I)}$.
We now claim that $T_{\alpha,\alpha}^+=U_+^*T^{(a,(b+a)/2)}_{\alpha,\pi/2}U_+$ and $T_{\alpha,\alpha}^-=U_-^*T^{(a,(b+a)/2)}_{\alpha,\pi}U_-$, where the unitary operators $U_\pm$ are given by
\begin{align}
\begin{split}
U_\pm:\quad \ker(P\mp I)&\rightarrow L^2((a,(a+b)/2);rdx),\\    
f&\mapsto \sqrt{2}f|_{(a,(a+b)/2)},
\end{split}
\end{align}
with inverse $U_{\pm}^{-1}=U_\pm^*$ given by
\begin{align}
\begin{split}
U_\pm^*:\quad &L^2((a,(a+b)/2);rdx)\rightarrow \ker(P\mp I),\\
&(U_\pm^*g)(x)=\frac{1}{\sqrt{2}}\begin{cases} g(x), &\mbox{if}\quad x\in(a,(a+b)/2), \\
\pm g(b-x+a),& \mbox{if}\quad x\in((a+b)/2,b).\end{cases}
\end{split}
\end{align}
We begin by showing $U_+\dom(T_{\alpha,\alpha}^+)\subseteq\dom(T_{\alpha,\pi/2}^{(a,(b+a)/2)})$. If $f\in\dom(T^+_{\alpha,\alpha})=\dom(T_{\alpha,\alpha})\cap\ker(P-I)$, note that $f(x)=f(b-x+a)$ and $(pf')(x)=-(pf')(b-x+a)$ for almost every $x\in(a,b)$ and $\widetilde{f}(a)\cos(\alpha)+\widetilde{f}'(a)\sin(\alpha)=0$. Moreover, since $f, pf'\in AC_{loc}(a,b)$, this implies that the following limits exist and are equal
\begin{align}
\begin{split}
\lim_{x\nearrow (a+b)/2}(pf')(x)=&\lim_{x\searrow (a+b)/2}(pf')(x)\\=&-\lim_{x\searrow (a+b)/2}(pf')(b-x+a)=-\lim_{x\nearrow (a+b)/2}(pf')(x),
\end{split}
\end{align}
which is only possible if $(pf')((a+b)/2)=0$. Hence, $U_+f$, which is proportional to the restriction of $f$ to the interval $(a,(a+b)/2)$ satisfies the boundary condition
\begin{equation}
\left(\widetilde{U_+f}\right)(a)\cos(\alpha)+\left(\widetilde{U_+f}\right)'(a)\sin(\alpha)=0
\end{equation}
and the Neumann boundary condition $(p(U_+f)')((a+b)/2)=0$. The regularity properties of $f$ on $(a,b)$ are also inherited by $U_+f$. We therefore conclude $U_+\dom(T_{\alpha,\alpha}^+)\subseteq\dom(T_{\alpha,\pi/2}^{(a,(b+a)/2)})$. The other inclusion $U_+^*\dom(T_{\alpha,\pi/2}^{(a,(b+a)/2)})\subseteq\dom(T_{\alpha,\alpha}^+)$ follows from similar considerations: if $f\in \dom(T_{\alpha,\pi/2}^{(a,(a+b)/2)})$ satisfies the boundary condition parameterized by $\alpha$ at $a$ and a Neumann condition at $(a+b)/2$, then $U_+^*f$ and $p(U^*_+f)'$ are both in $AC_{loc}(a,b)$ and will satisfy the boundary conditions at both endpoints of $(a,b)$. Lastly, it immediately follows that $U_+\tau f=\tau^{(a,(a+b)/2} U_+f$ and consequently $T_{\alpha,\alpha}^+=U^*_+T_{\alpha,\pi/2}^{(a,(b+a)/2)}U_+$.

The argument proving that $T_{\alpha,\alpha}^-=U_-^*T_{\alpha,\pi}^{(a,(b+a)/2)}U_-$ follows completely analogously noting that since $f\in\dom(T_{\alpha,\alpha})\cap\ker(P+I)$ implies $f(x)=-f(b-x+a)$ for all $x\in (a,b)$ and consequently 
\begin{align}
\begin{split}
\lim_{x\nearrow (a+b)/2}f(x)=&\lim_{x\searrow (a+b)/2}f(x)\\=&-\lim_{x\searrow (a+b)/2}f(b-x+a)=-\lim_{x\nearrow (a+b)/2}f(x),
\end{split}
\end{align}
from which the Dirichlet condition $f((a+b)/2)=0$ follows.
This finishes the first part of the proof from which the identity \eqref{4.12} immediately follows.

Next, note that a direct sum of two operators $T^{(a,(b+a)/2)}_{\alpha,\pi}\oplus T^{(a,(b+a)/2)}_{\alpha,\pi/2}$ is nonnegative if and only if each of its terms is.
Also, by Remark \ref{r3.5} $(iv)$, if $T^{(a,(b+a)/2)}_{\alpha,\pi/2}$ is a nonnegative self-adjoint extension, then so is $T^{(a,(b+a)/2)}_{\alpha,\pi}$.
Therefore $T_{\alpha,\alpha}$ is nonnegative if and only if $T^{(a,(b+a)/2)}_{\alpha,\pi/2}$ is. The necessary and sufficient condition for this is $\alpha\in[\nu,\pi]$, where $\nu$ is given by \eqref{4.14}, which is a particular case of \eqref{3.15} with $\beta'=\pi/2$. 

Lastly, from $\eqref{eq:3.19}$ it follows that $\nu\leq \alpha_1\leq\alpha_2$ implies $0\leq T^{(a,(b+a)/2)}_{\alpha_1,\pi/2}\leq T^{(a,(b+a)/2)}_{\alpha_2,\pi/2}$ and $T^{(a,(b+a)/2)}_{\alpha_1,\pi}\leq T^{(a,(b+a)/2)}_{\alpha_2,\pi}$. Since $T_{\alpha_{1/2},\alpha_{1/2}}$ is unitarily equivalent to the direct sum $T^{(a,(b+a)/2)}_{\alpha_{1/2},\pi/2}\oplus T^{(a,(b+a)/2)}_{\alpha_{1/2},\pi}$, this shows \eqref{eq:4.15}.
\end{proof}

As special cases, we point out the Friedrichs and Neumann-type extensions:
\begin{align}\label{4.15}
\begin{split}
&T_F=T_{\pi,\pi}\ \text{ is unitarily equivalent to }\ T^{(a,(b+a)/2)}_F\oplus T^{(a,(b+a)/2)}_{\pi,\pi/2},\\
&T_{\pi/2,\pi/2}\ \text{ is unitarily equivalent to }\ T^{(a,(b+a)/2)}_{\pi/2,\pi/2}\oplus T^{(a,(b+a)/2)}_{\pi/2,\pi}.
\end{split}
\end{align}

Next, we consider coupled boundary conditions that are invariant with respect to reflection about the midpoint of the interval. Once again, we use the results from Section \ref{subnonneg} to explicitly characterize all nonnegative self-adjoint extensions.

\begin{theorem}\label{t2.17}
Assume Hypothesis \ref{h2.13}. Let $T_{0,R'}$ denote the extension of the minimal operator associated with $\tau$ satisfying coupled boundary conditions $\eta=0$ and $R_{11}=R_{22}$ in \eqref{2.26}.
In addition, let $T^{(a,(b+a)/2)}_{\alpha,\pi}$ denote the $\alpha$--Dirichlet-type extension of the minimal operator associated with $\tau\big|_{(a,(b+a)/2)}$, and let $T^{(a,(b+a)/2)}_{\alpha',\pi/2}$ denote the $\alpha'$--Neumann-type extension of the minimal operator associated with $\tau\big|_{(a,(b+a)/2)}$. Then 
$T_{0,R'}$ is unitarily equivalent to the direct sum 
\begin{equation} \label{eq:4.16.0}
T^{(a,(b+a)/2)}_{\alpha,\pi}\oplus T^{(a,(b+a)/2)}_{\alpha',\pi/2}
\end{equation}
in $L^2((a,(b+a)/2),rdx)\oplus L^2((a,(b+a)/2),rdx)$,
where $\alpha,\alpha'\in(0,\pi]$ are given by
\begin{align}\label{4.17}
\begin{cases} \alpha=\cot^{-1}\left(\frac{R_{11}+1}{R_{12}}\right), \: \alpha'=\cot^{-1}\left(\frac{R_{11}-1}{R_{12}}\right),& \text{for }R_{12}\neq0,\\[3mm]
\alpha=\cot^{-1}\left(\frac{-R_{21}}{2}\right), \: \alpha'=\pi,& \text{for }R_{12}=0, R_{11}=-1,\\[3mm]\alpha=\pi, \: \alpha'=\cot^{-1}\left(\frac{R_{21}}{2}\right),& \text{for }R_{12}=0, R_{11}=1.
\end{cases}
\end{align}
In particular,
\begin{equation}\label{4.16}
\sigma(T_{0,R'})=\sigma(T^{(a,(b+a)/2)}_{\alpha,\pi})\cup\sigma(T^{(a,(b+a)/2)}_{\alpha',\pi/2}),
\end{equation}

Moreover, if $T_{min}\geq\varepsilon I$ for some $\varepsilon>0$, then $T_{0,R'}$ is nonnegative if both $\alpha\in[\nu,\pi]$ and $\alpha'\in[\mu,\pi]$ where $\alpha,\alpha'$ are defined by \eqref{4.17} and
\begin{equation}\label{4.24a}
\nu=\cot^{-1}\left(\frac{\hatt u_a(0,(b+a)/2)}{u_a(0,(b+a)/2)}\right),\quad
\mu=\cot^{-1}\left(\frac{\hatt u_a^{[1]}(0,(b+a)/2)}{u_a^{[1]}(0,(b+a)/2)}\right).
\end{equation}
The above extensions are strictly positive if and only if both $\alpha\in(\nu,\pi]$ and $\alpha'\in(\mu,\pi]$.
Finally, for
\begin{equation}
R'=\begin{pmatrix} R_{11} & R_{12}\\ R_{21} & R_{11} \end{pmatrix}\mbox{ and } \hat{R}'=\begin{pmatrix} \hat{R}_{11} & \hat{R}_{12}\\ \hat{R}_{21} & \hat{R}_{11} \end{pmatrix} \text{ with } \det(R')=\det(\hat{R}')=1,
\end{equation}
with both extensions $T_{R',0}, T_{\hat{R}',0}$ nonnegative, one has $T_{R',0}\leq T_{\hat{R}',0}$ if and only if one of the following cases hold$:$
\begin{itemize}
\item[$(i)$] $R_{12}\neq 0, \hat{R}_{12}\neq 0$, and
\begin{equation}\label{4.29a}
\frac{\hat{R}_{11}-1}{\hat{R}_{12}}\leq \frac{{R}_{11}-1}{{R}_{12}}\quad\mbox{and}\quad\frac{\hat{R}_{11}+1}{\hat{R}_{12}}\leq \frac{R_{11}+1}{R_{12}} ;
\end{equation}
\item[$(ii)$] $R_{12}\neq 0$, $\hat{R}_{12}=0$, $\hat{R}_{11}=1$, and 
\begin{equation}
 \frac{\hat{R}_{12}}{2}\leq \frac{R_{11}-1}{R_{12}};
\end{equation}
\item[$(iii)$]
$R_{12}\neq 0$, $\hat{R}_{12}=0$, $\hat{R}_{11}=-1$, and 
\begin{equation}
 \frac{-\hat{R}_{12}}{2}\leq \frac{R_{11}+1}{R_{12}};
\end{equation}
\item[$(iv)$] $R_{12}=\hat{R}_{12}=0$, $R_{11}=\hat{R}_{11}=1$, and $\hat{R}_{21}\leq {R}_{21};$
\item[$(v)$] $R_{12}=\hat{R}_{12}=0$, $R_{11}=\hat{R}_{11}=-1$, and ${R}_{21}\leq \hat{R}_{21}$.
\end{itemize}
\end{theorem}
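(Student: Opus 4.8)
The plan is to run the reflection argument of Theorem~\ref{t2.16}. By Proposition~\ref{t.2.14}, $\{T_{0,R'}\}$ (with $R'\in SL(2,\bbR)$, $R_{11}=R_{22}$) is exactly the family of coupled self-adjoint extensions commuting with the midpoint reflection $P$, $(Pf)(x)=f(b-x+a)$; hence $\ker(P\mp I)$ reduces $T_{0,R'}$ and $T_{0,R'}=T_{0,R'}^{+}\oplus T_{0,R'}^{-}$ with $T_{0,R'}^{\pm}:=T_{0,R'}|_{\ker(P\mp I)}$. Conjugating by the same unitaries $U_{\pm}\colon\ker(P\mp I)\to L^2((a,(b+a)/2);rdx)$, $f\mapsto\sqrt2\,f|_{(a,(b+a)/2)}$, used in the proof of Theorem~\ref{t2.16}, the symmetric part $T_{0,R'}^{+}$ becomes a half-interval operator carrying a Neumann-type condition $g^{[1]}((b+a)/2)=0$ at the midpoint and the antisymmetric part $T_{0,R'}^{-}$ one carrying a Dirichlet-type condition $g((b+a)/2)=0$; the intertwining $U_{\pm}\tau=\tau|_{(a,(b+a)/2)}U_{\pm}$ and both domain inclusions are verified verbatim as in Theorem~\ref{t2.16}. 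So everything reduces to identifying which separated condition at $x=a$ each half-interval piece inherits from the coupled condition in \eqref{2.26}.

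This is the one new computation. Fix principal and nonprincipal solutions $u_a(0,\dott),\hatt u_a(0,\dott)$ of $\tau u=0$ at $a$ with Wronskian $1$ and build the reflected fundamental system at $b$ as in Appendix~\ref{App}; then for $f\in\dom(T_{max})$ the generalized boundary values obey (with the Appendix's sign choices) $\wti{Pf}(a)=\wti f(b)$ and ${(\wti{Pf})}^{\,\prime}(a)=-{\wti f}^{\,\prime}(b)$, which is also why the minus sign on $\wti g^{\,\prime}(b)$ in \eqref{2.25} is the convenient choice. For $f\in\dom(T_{0,R'})\cap\ker(P\mp I)$ one has $Pf=\pm f$, hence $\wti f(b)=\pm\wti f(a)$ and ${\wti f}^{\,\prime}(b)=\mp{\wti f}^{\,\prime}(a)$; feeding these into the two scalar rows of $(\wti f(b),{\wti f}^{\,\prime}(b))^{\top}=R'(\wti f(a),{\wti f}^{\,\prime}(a))^{\top}$ and using $\det(R')=R_{11}^{2}-R_{12}R_{21}=1$, the two resulting linear relations at $a$ turn out to be proportional, so each piece satisfies the single separated condition $(R_{11}\pm1)\wti f(a)+R_{12}{\wti f}^{\,\prime}(a)=0$, with the $+$ sign for the antisymmetric (Dirichlet at the midpoint) piece and the $-$ sign for the symmetric (Neumann) piece. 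Matching against \eqref{2.25} gives $\cot(\alpha)=(R_{11}+1)/R_{12}$ for the Dirichlet piece and $\cot(\alpha')=(R_{11}-1)/R_{12}$ for the Neumann piece when $R_{12}\neq0$; when $R_{12}=0$ one has $R_{11}=\pm1$, one of the two conditions collapses to $\wti f(a)=0$ (so $\alpha=\pi$ if $R_{11}=1$, $\alpha'=\pi$ if $R_{11}=-1$) while the other gives the appropriate mixed condition, reproducing the remaining two cases of \eqref{4.17}. This yields the unitary equivalence \eqref{eq:4.16.0}, and the spectral identity \eqref{4.16} follows immediately.

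For nonnegativity and strict positivity, recall that a direct sum is nonnegative (resp.\ strictly positive) iff each summand is, so it suffices to treat $T^{(a,(b+a)/2)}_{\alpha,\pi}$ and $T^{(a,(b+a)/2)}_{\alpha',\pi/2}$ separately. Applying Theorem~\ref{t2.16} with $\alpha=\pi$ to $T_F=T_{\pi,\pi}$ (which is $\geq\varepsilon I$ by Theorem~\ref{t3.1}) shows $T^{(a,(b+a)/2)}_{\pi,\pi}\geq\varepsilon I$ and $T^{(a,(b+a)/2)}_{\pi,\pi/2}\geq\varepsilon I$, i.e.\ Hypothesis~\ref{h3.1} holds for each half-interval problem, with the role of $\beta'$ played by $\pi$ and by $\pi/2$ respectively. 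Since the midpoint is a regular endpoint, its generalized boundary values reduce to the ordinary value and quasi-derivative, so the fixed-boundary-condition theorem of Section~\ref{subnonneg} (see \eqref{3.15}) gives the admissible ranges $\alpha\in[\nu,\pi]$ and $\alpha'\in[\mu,\pi]$ with $\nu,\mu$ exactly as in \eqref{4.24a} (these are \eqref{3.15} evaluated at $\beta'=\pi$ and $\beta'=\pi/2$); strict positivity off the lower endpoints of these ranges follows from \cite[Cor.~2.14]{AS80}, or, as after \eqref{3.15}, by setting the constant term of the small-$z$ expansion of the relevant characteristic functions to zero.

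For the ordering, since $T_{0,R'}$ and $T_{0,\hat R'}$ are unitarily equivalent to direct sums relative to the \emph{same} decomposition of $L^2((a,b);rdx)$, one has $T_{0,R'}\leq T_{0,\hat R'}$ iff $T^{(a,(b+a)/2)}_{\alpha,\pi}\leq T^{(a,(b+a)/2)}_{\hat\alpha,\pi}$ and $T^{(a,(b+a)/2)}_{\alpha',\pi/2}\leq T^{(a,(b+a)/2)}_{\hat\alpha',\pi/2}$, and, both summands being nonnegative by hypothesis, \eqref{eq:3.19} turns this into $\alpha\leq\hat\alpha$ and $\alpha'\leq\hat\alpha'$. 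Because $\cot^{-1}$ is strictly decreasing, each of these is equivalent to the reverse inequality between the corresponding arguments in \eqref{4.17}; splitting into cases according to whether $R_{12}$ and $\hat R_{12}$ vanish then gives exactly cases $(i)$--$(v)$, and one checks that if exactly one of $R_{12},\hat R_{12}$ vanishes (so one of the four angles equals $\pi$, i.e.\ a half-interval Friedrichs extension appears) then $T_{0,R'}\leq T_{0,\hat R'}$ can hold only if the other matrix degenerates as well, so no further cases occur. I expect the computation identifying the boundary condition at $x=a$ — pinning down the Appendix signs, checking the $\det(R')=1$ collapse of the two rows to a single condition, and disposing of the $R_{12}=0$ degeneracies — to be the only real obstacle; the remaining steps are a clean reduction to Theorems~\ref{t2.16} and~\ref{t3.1}, the fixed-boundary-condition analysis of Section~\ref{subnonneg}, and \eqref{eq:3.19}.
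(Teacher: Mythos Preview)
Your approach is essentially the same as the paper's: decompose into $\ker(P\mp I)$, conjugate by the same unitaries $U_\pm$ used in Theorem~\ref{t2.16}, and read off the separated condition at $a$ by plugging the parity relations $\wti f(b)=\pm\wti f(a)$, $\wti f^{\,\prime}(b)=\mp\wti f^{\,\prime}(a)$ into the two rows of the coupled condition, using $\det(R')=1$ to see that the two rows collapse to a single relation. The nonnegativity and ordering arguments via Section~\ref{subnonneg} and \eqref{eq:3.19} also match the paper's (the paper is terser and just says ``the proof of \eqref{4.24a} is the same as before''; your route through Theorem~\ref{t2.16} applied to $T_F$ to verify Hypothesis~\ref{h3.1} on the half interval is a clean way to make this explicit).

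There is one slip in your last paragraph. You write that ``if exactly one of $R_{12},\hat R_{12}$ vanishes \dots\ then $T_{0,R'}\leq T_{0,\hat R'}$ can hold only if the other matrix degenerates as well, so no further cases occur.'' But cases $(ii)$ and $(iii)$ are precisely the situations $R_{12}\neq0$, $\hat R_{12}=0$, and they \emph{do} yield orderings. What actually disposes of the remaining possibilities is: if $R_{12}=0$ and $\hat R_{12}\neq0$, then one of $\alpha,\alpha'$ equals $\pi$ while the corresponding $\hat\alpha$ or $\hat\alpha'$ is strictly less than $\pi$, so $\alpha\leq\hat\alpha$ (or $\alpha'\leq\hat\alpha'$) fails; and if $R_{12}=\hat R_{12}=0$ with $R_{11}\neq\hat R_{11}$, then the $\pi$ appears in $\alpha$ for one matrix and in $\alpha'$ for the other, so again no ordering. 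Fixing this sentence, your argument goes through.
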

\begin{remark}
We remark that in the separated setting of Theorem \ref{t2.16}, one can order all nonnegative extensions. However, this is not true in the coupled setting of Theorem \ref{t2.17}. For instance, letting $R_{11}=0=\hat{R}_{11}$ one sees that only one of the inequalities in \eqref{4.29a} can hold unless $R_{12}=\hat{R}_{12}$. Therefore there is no ordering for extensions with boundary conditions of this form unless $R_{12}=\hat{R}_{12}$.
\end{remark}
\begin{proof}[Proof of Theorem \ref{t2.17}]
Much of the proof of this result follows similarly to that of Theorem \ref{t2.16}. In fact, the analog of the decomposition \eqref{4.10a} simply becomes
\begin{equation}
T_{0,R'}=T_{0,R'}^+\oplus T_{0,R'}^-,
\end{equation}
where $T_{0,R'}^\pm:=T_{0,R'}\big|_{\ker(P\mp I)}$. One then introduces the unitary operators $U_\pm$ as before, noting that the proof of the Neumann (resp., Dirichlet) condition at the midpoint for $T_{0,R'}^+$ (resp., $T_{0,R'}^-$) follows as before once one shows how the coupled boundary conditions translate into one separated boundary condition at the endpoint $x=a$.

We begin by considering $T_{0,R'}^+$. The coupled boundary conditions in \eqref{2.26} with $\eta=0$ and $R=R'$ then give two boundary condition equations after applying the identities $\wti g(b)=\wti g(a)$ and $\wti{g}^{\, \prime}(a)=-\wti{g}^{\, \prime}(b)$ in this case:
\begin{align}
\wti g(a)&=R_{11}\wti g(a)+R_{12} \wti{g}^{\, \prime}(a), \label{4.30a}\\
-\wti{g}^{\, \prime}(a)&=R_{21}\wti g(a)+R_{11}\wti{g}^{\, \prime}(a).\label{4.31a}
\end{align}
The key observation now is that the two equations are consistent with one another. For instance, if $R'=-I_2$, then the two equations state that $\wti g(a)=0$ and $\wti{g}^{\, \prime}(a)=\wti{g}^{\, \prime}(a)$, which is simply the Friedrichs condition at $x=a$. In fact, \eqref{4.30a} shows that the Friedrichs condition holds whenever $R_{12}=0$ and $R_{11}=-1$. 
Otherwise, if $R_{21}=0$ and $R_{11}=-1$, then $\eqref{4.31a}$ is again vacuous and \eqref{4.30a} simplifies to 
\begin{equation}\label{4.32a}
-\frac{2}{R_{12}}\wti g(a)+\wti{g}^{\, \prime}(a)=0.
\end{equation}
Finally, if $R_{11}\neq-1$ and \eqref{4.31a} holds, one can solve for $\wti{g}^{\, \prime}(a)$ in \eqref{4.31a} and substitute into the right-hand side of \eqref{4.30a} to verify that \eqref{4.30a} holds since
\begin{equation}
R_{11}\wti g(a)+R_{12} \wti{g}^{\, \prime}(a)=R_{11}\wti g(a)-\frac{R_{12}R_{21}}{R_{11}+1} \wti{g}(a)=\wti g(a),
\end{equation}
where we used that $\det(R')=1$. Therefore, moving everything to one side of \eqref{4.30a} and dividing yields
\begin{equation}\label{4.34a}
\frac{R_{11}-1}{R_{12}}\wti g(a)+\wti{g}^{\, \prime}(a)=0,\quad R_{12}\neq0.
\end{equation}
Comparing \eqref{4.31a} (for $R_{11}=1,\ R_{12}=0$) and \eqref{4.34a} with \eqref{2.25} one obtains the identification for $\alpha'\in(0,\pi]$
\begin{align}
&\cot(\alpha')=\frac{R_{11}-1}{R_{12}}\ \text{ for } R_{12}\neq0,\quad \cot(\alpha')=\frac{R_{21}}{2}\ \text{ for } R_{11}=1,\; R_{12}= 0, \notag \\
\quad
&\text{and}\ \alpha'=\pi \text{ for } R_{11}=-1,\; R_{12}=0.
\end{align}
A similar analysis when considering $T_{0,R'}^-$ and using the identities $\wti g(b)=-\wti g(a)$ and $\wti{g}^{\, \prime}(a)=\wti{g}^{\, \prime}(b)$ leads to the identification for $\alpha\in(0,\pi]$
\begin{align}
&\cot(\alpha)=\frac{R_{11}+1}{R_{12}}\ \text{ for } R_{12}\neq0,\quad \cot(\alpha)=\frac{-R_{21}}{2}\ \text{ for } R_{11}=-1,\; R_{12}=0,\notag \\ 
&\text{and}\quad \alpha=\pi\ \text{ for } R_{11}=1,\; R_{12}=0.
\end{align}
Thus the proof of \eqref{eq:4.16.0} and \eqref{4.17} now follows exactly as before, with \eqref{4.16} an immediate consequence. In addition, the proof of \eqref{4.24a} is the same as before.

Finally, the ordering follows similarly to before utilizing the expression \eqref{4.17} while noting that there are now two parameters that must be considered.
\end{proof}

We now point out the special cases of the periodic and antiperiodic extensions:
\begin{align}\label{4.26}
\begin{split}
T_{Per}&=T_{0,I_2}\ \text{ is unitarily equivalent to }\ T^{(a,(b+a)/2)}_F\oplus T^{(a,(b+a)/2)}_{\pi/2,\pi/2},\\
T_{Anti}&=T_{0,-I_2}\ \text{ is unitarily equivalent to }\ T^{(a,(b+a)/2)}_{\pi,\pi/2}\oplus T^{(a,(b+a)/2)}_{\pi/2,\pi}.
\end{split}
\end{align}

The next corollary follows immediately from \eqref{eq:4.12.0} and \eqref{eq:4.16.0}.
\begin{corollary}
Assume Hypothesis \ref{h2.13} and let $R,\ \hat{R}$ be given such that $R_{11}=R_{22}$ and $\hat{R}_{11}=\hat{R}_{22}$. In addition, assume $\alpha=\hat{\alpha}'$ and $\alpha'=\hat{\alpha}$, where these boundary condition parameters are defined via \eqref{4.17} for $R,\ \hat{R}$. Then the following holds$:$
\begin{equation}\label{4.35a}
T_{\alpha,\alpha}\oplus T_{\alpha',\alpha'}\ \text{ is unitarily equivalent to }\ T_{0,R}\oplus T_{0,R'}.
\end{equation}
\end{corollary}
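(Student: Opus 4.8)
The plan is to push everything through the half-interval decompositions already established in Theorems~\ref{t2.16} and \ref{t2.17} and then observe that the two resulting fourfold direct sums have the same summands. First I would apply Theorem~\ref{t2.16} to each operator on the left-hand side of \eqref{4.35a}. Since $\alpha,\alpha'\in(0,\pi]$ (these being the parameters produced by \eqref{4.17}, whose output always lies in $(0,\pi]$), that theorem gives that $T_{\alpha,\alpha}$ is unitarily equivalent to $T^{(a,(b+a)/2)}_{\alpha,\pi}\oplus T^{(a,(b+a)/2)}_{\alpha,\pi/2}$ and $T_{\alpha',\alpha'}$ is unitarily equivalent to $T^{(a,(b+a)/2)}_{\alpha',\pi}\oplus T^{(a,(b+a)/2)}_{\alpha',\pi/2}$; hence the left-hand side of \eqref{4.35a} is unitarily equivalent to
\begin{equation*}
T^{(a,(b+a)/2)}_{\alpha,\pi}\oplus T^{(a,(b+a)/2)}_{\alpha,\pi/2}\oplus T^{(a,(b+a)/2)}_{\alpha',\pi}\oplus T^{(a,(b+a)/2)}_{\alpha',\pi/2}.
\end{equation*}

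Next I would apply Theorem~\ref{t2.17} to the two coupled extensions on the right-hand side of \eqref{4.35a}. By construction $\alpha,\alpha'$ are exactly the parameters assigned to $R$ through \eqref{4.17}, so $T_{0,R}$ is unitarily equivalent to $T^{(a,(b+a)/2)}_{\alpha,\pi}\oplus T^{(a,(b+a)/2)}_{\alpha',\pi/2}$; likewise $\hat{\alpha},\hat{\alpha}'$ are the parameters assigned to $\hat R$, so $T_{0,\hat R}$ is unitarily equivalent to $T^{(a,(b+a)/2)}_{\hat{\alpha},\pi}\oplus T^{(a,(b+a)/2)}_{\hat{\alpha}',\pi/2}$. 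Therefore the right-hand side of \eqref{4.35a} is unitarily equivalent to
\begin{equation*}
T^{(a,(b+a)/2)}_{\alpha,\pi}\oplus T^{(a,(b+a)/2)}_{\alpha',\pi/2}\oplus T^{(a,(b+a)/2)}_{\hat{\alpha},\pi}\oplus T^{(a,(b+a)/2)}_{\hat{\alpha}',\pi/2}.
\end{equation*}

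The final step is to feed in the hypotheses $\alpha=\hat{\alpha}'$ and $\alpha'=\hat{\alpha}$: these identities turn $T^{(a,(b+a)/2)}_{\hat{\alpha},\pi}$ into $T^{(a,(b+a)/2)}_{\alpha',\pi}$ and $T^{(a,(b+a)/2)}_{\hat{\alpha}',\pi/2}$ into $T^{(a,(b+a)/2)}_{\alpha,\pi/2}$, so the two fourfold direct sums are built from the identical collection of four half-interval operators, merely arranged in a different order. Since permuting the summands of a finite direct sum is realized by a unitary coordinate permutation, and unitary equivalence is transitive, \eqref{4.35a} follows. I expect essentially no genuine obstacle here; the only thing requiring care is keeping straight the bookkeeping convention of Theorems~\ref{t2.16} and \ref{t2.17}, i.e.\ which of the two angles attached to a given coupling matrix is paired with the Dirichlet-type ($\pi$) summand and which with the Neumann-type ($\pi/2$) summand, and then verifying that the swap $\alpha=\hat{\alpha}'$, $\alpha'=\hat{\alpha}$ in the hypothesis is exactly the permutation that lines the two lists up.
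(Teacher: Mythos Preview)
Your argument is correct and is exactly the route the paper intends: the corollary is stated to follow immediately from the half-interval decompositions \eqref{eq:4.12.0} and \eqref{eq:4.16.0}, and you have spelled out precisely that computation, matching the four summands via the swap hypothesis $\alpha=\hat{\alpha}'$, $\alpha'=\hat{\alpha}$.
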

An important special case of \eqref{4.35a} is the following:
\begin{equation}
T_F\oplus T_{\pi/2,\pi/2}\ \text{ is unitarily equivalent to }\ T_{Per}\oplus T_{Anti}.
\end{equation}

As a final corollary of Theorems \ref{t2.16} and \ref{t2.17}, we have the following.

\begin{theorem}
Assume Hypothesis \ref{h2.13}. Then every self-adjoint extension for the full interval problem that is invariant with respect to reflection about the midpoint of the interval is unitarily equivalent to the direct sum of two self-adjoint extensions on the half interval: one with Dirichlet and one with Neumann boundary conditions at the regular point $x=(a+b)/2$, and both with some $($same or different$)$ general separated boundary condition at $x=a$.

Conversely, consider any pair of self-adjoint extensions on the half interval such that one has Dirichlet and one has Neumann boundary conditions at the regular point $x=(a+b)/2$, and both have some $($same or different$)$ general separated boundary condition at $x=a$. Then their direct sum is unitarily equivalent to a self-adjoint extension for the full interval that is invariant with respect to reflection about the midpoint of the interval. Moreover, if the separated boundary conditions imposed at $x=a$ are the same, say $\alpha$, then the full interval extension has $\alpha$-separated boundary conditions at $x=a$ and $x=b$. If the separated boundary conditions imposed at $x=a$ are different, then the full interval extension has coupled boundary conditions with $\eta=0$ and $R'$ given as in Theorem \ref{t2.17}.
\end{theorem}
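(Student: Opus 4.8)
The plan is to obtain the statement purely by assembling Proposition~\ref{t.2.14} with Theorems~\ref{t2.16} and~\ref{t2.17}; no new analysis is needed beyond inverting the dictionary~\eqref{4.17}. Throughout, write $c=(b+a)/2$ for the midpoint.

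First I would dispose of the forward direction. By Theorem~\ref{t2.10}\,$(iii)$ every self-adjoint extension of $T_{min}$ is either of separated type $T_{\a,\b}$ or of coupled type $T_{\eta,R}$, and by Proposition~\ref{t.2.14} such an extension is invariant under the reflection $P$ precisely when it is $T_{\a,\a}$ for some $\a\in(0,\pi]$ or $T_{0,R'}$ with $R'\in SL(2,\bbR)$ and $R_{11}=R_{22}$. In the first case Theorem~\ref{t2.16} states that $T_{\a,\a}$ is unitarily equivalent to $T^{(a,c)}_{\a,\pi}\oplus T^{(a,c)}_{\a,\pi/2}$; in the second case Theorem~\ref{t2.17} states that $T_{0,R'}$ is unitarily equivalent to $T^{(a,c)}_{\a,\pi}\oplus T^{(a,c)}_{\a',\pi/2}$ with $\a,\a'$ given by~\eqref{4.17}. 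In both cases the two summands are exactly a half-interval extension with Dirichlet-type condition at the regular point $c$ and one with Neumann-type condition at $c$, each carrying a separated condition at $a$; the two conditions at $a$ agree if and only if the extension is of separated type.

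For the converse, take a half-interval extension with Dirichlet condition at $c$ and separated condition $\a\in(0,\pi]$ at $a$ (that is, $T^{(a,c)}_{\a,\pi}$) together with one with Neumann condition at $c$ and separated condition $\a'\in(0,\pi]$ at $a$ (that is, $T^{(a,c)}_{\a',\pi/2}$). If $\a=\a'$, Theorem~\ref{t2.16} directly identifies their direct sum, up to unitary equivalence, with $T_{\a,\a}$, which is $P$-invariant and has $\a$-separated conditions at $a$ and $b$. If $\a\neq\a'$, I would produce $R'\in SL(2,\bbR)$ with $R_{11}=R_{22}$ realizing the pair $(\a,\a')$ through~\eqref{4.17}: when $\a,\a'\in(0,\pi)$ one has $\cot(\a)\neq\cot(\a')$, so setting
\begin{equation*}
R_{12}=\frac{2}{\cot(\a)-\cot(\a')},\qquad R_{11}=R_{22}=\frac{\cot(\a)+\cot(\a')}{\cot(\a)-\cot(\a')},\qquad R_{21}=\frac{R_{11}^2-1}{R_{12}}
\end{equation*}
gives $\det(R')=1$, $R_{12}\neq0$, and reproduces $(\a,\a')$ via the first case of~\eqref{4.17}; when $\a=\pi$ (resp.\ $\a'=\pi$) one instead takes $R_{12}=0$, $R_{11}=R_{22}=1$, $R_{21}=2\cot(\a')$ (resp.\ $R_{11}=R_{22}=-1$, $R_{21}=-2\cot(\a)$), landing in the third (resp.\ second) case; the case $\a=\a'=\pi$ cannot occur in this branch and is covered by the equal-angle analysis. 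Since unitary equivalence is symmetric, Theorem~\ref{t2.17} then identifies the direct sum with $T_{0,R'}$, which is $P$-invariant with $\eta=0$ and the indicated $R'$ by Proposition~\ref{t.2.14}.

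The one step with genuine content is this inversion, together with the observation that it makes the dichotomy sharp: for $\a\neq\a'$ the matrix $R'$ produced above is either non-triangular ($R_{12}\neq0$) or equal to $\pm I_2$ up to its $(2,1)$ entry, so $T_{0,R'}$ is a genuinely coupled extension (periodic and antiperiodic included) in the sense of Theorem~\ref{t2.10}\,$(ii)$ and not secretly a separated one, while for $\a=\a'$ the extension is separated by construction. I do not expect any serious obstacle beyond the bookkeeping of the three regimes in~\eqref{4.17}. One may also record, for completeness, that the decomposition is unique up to swapping the two summands, since the Dirichlet/Neumann datum at $c$ and the separated parameter at $a$ are read off from the generalized boundary values on each reducing subspace $\ker(P\mp I)$.
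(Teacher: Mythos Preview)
Your proposal is correct and follows exactly the approach the paper intends: the statement is presented there as an immediate corollary of Proposition~\ref{t.2.14} together with Theorems~\ref{t2.16} and~\ref{t2.17}, with no separate proof given. Your only addition is to spell out explicitly the inversion of the dictionary~\eqref{4.17} needed for the converse direction, which the paper leaves implicit; the formulas you write down are correct and cover all three regimes.
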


\subsection{Two interval problem}

We now look to extend the results of Theorem \ref{t2.17} to the two interval setting, which can be accomplished by following \cite[Ch. 13]{Ze05}. Once again, we point out that such problems are often considered when studying point interactions in physical models (see, e.g., \cite{ADK98}).

Let us begin by considering a Sturm--Liouville expression $\tau^{-}$ on $(-a,0)$ with $a\in(0,\infty]$ satisfying Hypothesis \ref{h2.1}. We then define $\tau^{+}$ to be the expression which is simply $\tau^{-}$ reflected about $x=0$.\footnote{For ease of notation, we choose the middle point to be zero.} Then $\tau^{\pm}$ have maximal and minimal operators, $T^\pm_{max}$ and $T^\pm_{min}$, respectively, associated with them as before. The two interval problem then consists of considering the differential expression, $s$, on $(-a,0)\cup(0,a)$ which acts as $s\big|_{(-a,0)}=\tau^{-}$ and $s\big|_{(0,a)}=\tau^{+}$, whose maximal and minimal operators, $S_{max}$ and $S_{min}$, respectively, are given by
\begin{equation}
S_{max}=T_{max}^-\oplus T_{max}^+,\quad S_{min}=T_{min}^-\oplus T_{min}^+.
\end{equation}

\begin{hypothesis}\label{h4.8}
Assume the two interval Sturm--Liouville differential expression $s$ on $(-a,0)\cup(0,a)$ has coefficient functions $p,q,r$ which are symmetric about $x=0$ and satisfy Hypothesis \ref{h2.1} on $(-a,0)$ $($hence also on $(0,a)$$)$. In addition, assume the minimal operator, $S_{min}$, associated with $s$ is bounded from below.
\end{hypothesis}

We will assume Hypothesis \ref{h4.8} throughout the following discussion. Note that there are now 4 endpoints in this problem: $x=-a$, $x=a$, along with the the left and right limits at $x=0$, which we will denote by $x=0^\mp$, respectively. Assuming boundary conditions are needed at some of these endpoints, if we impose boundary conditions which only affect endpoints from the same side of $(-a,0)\cup(0,a)$ (e.g., all separated or coupling $x=-a$ with $x=0^-$ and $x=a$ with $x=0^+$), then the problem trivially becomes the direct sum of the problems on each half of the interval (in general regardless of symmetries). So we focus on extending our results to the case when coupling occurs between endpoints from different sides of $(-a,0)\cup(0,a)$.

Notice that if the problem has exactly two limit circle endpoints it becomes completely analogous to considering the one interval problem in terms of parameterizing self-adjoint extensions, replacing the previous $a$ and $b$ by these two limit circle endpoints (see \cite[Thm. 13.3.1 Case 3]{Ze05} where the boundary values can be written in terms of generalized boundary values just as the one interval case).

Since our problem was constructed to have coefficient functions which are symmetric about $x=0$, we once again restrict to those coupled boundary conditions which are invariant with respect to this symmetry. These are exactly the same as before: $\eta=0$ and $R_0\in SL(2,\bbR)$ such that $R_{11}=R_{22}$. We will denote these self-adjoint extensions of $S_{min}$ by $S_{R_0}$.

\begin{theorem}\label{t4.9}
Assume Hypothesis \ref{h4.8}, $x=\pm a$ are in the limit point case, and $x=0^\mp$ are in the limit circle nonoscillatory case. Let $S_{R_0}$ denote the extension of the minimal operator associated with $s$ satisfying coupled boundary conditions at $x=0^\mp$ with $\eta=0$ and $R_{11}=R_{22}$.
In addition, let $T^{+}_\a$ denote the self-adjoint extension of $s\big|_{(0,a)}$ with $\a$-separated boundary conditions applied at $x=0$. Then $S_{R_0}$ is unitarily equivalent to the direct sum
\begin{equation} \label{eq:4.37}
T^{+}_{\a}\oplus T^{+}_{\a'}
\end{equation}
in $L^2((0,a);rdx)\oplus L^2((0,a);rdx)$ with $\a,\a'$ given as in \eqref{4.17}.

Moreover, the statements regarding nonnegative extensions in Theorem \ref{t2.17} hold in this setting as well.
\end{theorem}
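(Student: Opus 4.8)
The plan is to reproduce, in the two-interval setting, the argument used for Theorems~\ref{t2.16} and \ref{t2.17}, built around the reflection $P$ about $x=0$ on $L^2((-a,0);rdx)\oplus L^2((0,a);rdx)$. First I would record that $P$ is unitary with $P^2=I$ and that $\ker(P\mp I)$ consists of the functions that are even, respectively odd, about $x=0$; since $p,q,r$ are symmetric about $x=0$ and the conditions defining $S_{R_0}$ (namely $\eta=0$, $R_{11}=R_{22}$) are precisely the coupled conditions invariant under $P$ --- the two-interval analogue of Proposition~\ref{t.2.14} --- the operator $S_{R_0}$ commutes with $P$ and hence reduces along $\ker(P\mp I)$, so that $S_{R_0}=S_{R_0}^+\oplus S_{R_0}^-$ with $S_{R_0}^\pm:=S_{R_0}|_{\ker(P\mp I)}$, in analogy with \eqref{4.10a}. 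I would then introduce the unitaries $U_\pm\colon\ker(P\mp I)\to L^2((0,a);rdx)$, $f\mapsto\sqrt2\,f|_{(0,a)}$, whose inverses are the even/odd extensions, and verify that $S_{R_0}^\pm$ is unitarily equivalent through $U_\pm$ to a self-adjoint extension of the minimal operator of $s|_{(0,a)}=\tau^+$; as $U_\pm(sf)=\tau^+(U_\pm f)$ by construction, everything reduces to identifying the boundary conditions these extensions carry.

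For that I would fix principal and nonprincipal solutions $u^-,\hatt u^-$ of $\tau^-u=0$ at the endpoint $0^-$ and, using the symmetry of the fundamental system from Appendix~\ref{App}, take $u^+:=-Pu^-$ and $\hatt u^+:=P\hatt u^-$ as principal and nonprincipal solutions of $\tau^+u=0$ at $0^+$; this choice preserves the Wronskian normalization $W(\hatt u^+,u^+)=1$, and a short computation based on $W(Pf,Pg)(x)=-W(f,g)(-x)$ shows that for $g$ even one has $\wti g(0^+)=\wti g(0^-)$ and $\wti g^{\,\prime}(0^+)=-\wti g^{\,\prime}(0^-)$, while for $g$ odd the two signs are interchanged. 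Substituting these relations into the coupled condition $\big(\wti g(0^+),\wti g^{\,\prime}(0^+)\big)^{\top}=R_0\big(\wti g(0^-),\wti g^{\,\prime}(0^-)\big)^{\top}$ reduces it --- via the same check that the resulting pair of boundary-condition equations is consistent, carried out from \eqref{4.30a} through \eqref{4.34a} in the proof of Theorem~\ref{t2.17} --- to a single separated condition at $x=0$ on $(0,a)$, with angle $\alpha'$ in the even sector and $\alpha$ in the odd sector, the angles being exactly those given by \eqref{4.17}. Since $x=\pm a$ are in the limit point case, no condition survives at the outer endpoints and this is preserved by $U_\pm$, so $U_+\dom(S_{R_0}^+)=\dom(T^+_{\alpha'})$ and $U_-\dom(S_{R_0}^-)=\dom(T^+_{\alpha})$; hence $S_{R_0}$ is unitarily equivalent to the direct sum \eqref{eq:4.37}, and the corresponding splitting of the spectrum follows at once.

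For the nonnegativity assertions I would observe that each half-interval operator $T^+_{\alpha}$ has $x=0$ in the limit circle case and $x=a$ in the limit point case, so it falls under Remark~\ref{Rem:3.3}$(iv)$, i.e.\ the fixed-boundary-condition analysis of Section~\ref{subnonneg} with the free endpoint in the limit point case. Assuming as in Theorem~\ref{t2.17} that $S_{min}\geq\varepsilon I$ for some $\varepsilon>0$, that analysis shows the nonnegative extensions of $s|_{(0,a)}$ with a separated condition at $x=0$ are exactly those with angle in a half-open interval $[\rho,\pi]$ --- strictly positive on $(\rho,\pi]$ and monotonically ordered by the angle --- with $\rho$ the $\cot^{-1}$ of the appropriate ratio of generalized boundary values at $x=0$ of the principal and nonprincipal solutions of $\tau^+$ at $x=a$, which is the two-interval reading of \eqref{4.24a}. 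Because a direct sum is nonnegative (respectively, ordered) exactly when each summand is, transferring these facts through $T^+_{\alpha}\oplus T^+_{\alpha'}$ and substituting the formulas \eqref{4.17} for $\alpha,\alpha'$ in terms of the entries of $R'$ reproduces verbatim the nonnegativity criteria and the ordering cases $(i)$--$(v)$ of Theorem~\ref{t2.17}.

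The step I expect to cost the most care is not conceptual but a matter of conventions: pinning down the signs so that $P$ genuinely reduces $S_{R_0}$, so that $u^+,\hatt u^+$ simultaneously satisfy $W(\hatt u^+,u^+)=1$ and yield the clean relations $\wti g(0^+)=\pm\wti g(0^-)$, $\wti g^{\,\prime}(0^+)=\mp\wti g^{\,\prime}(0^-)$, and so that the resulting separated condition at $x=0$ is written with the orientation consistent with \eqref{2.25}. This is exactly where Appendix~\ref{App} is indispensable, and where the two-interval bookkeeping ($0^-$ versus $0^+$, the outer endpoints now in the limit point case) differs from the one-interval situation just enough that these relations must be re-derived rather than quoted; once they are in place, every remaining step is a transcription of the corresponding step in the proofs of Theorems~\ref{t2.16} and \ref{t2.17}.
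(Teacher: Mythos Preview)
Your proposal is correct and follows essentially the same approach as the paper: reduce $S_{R_0}$ along the even/odd eigenspaces of the reflection about $x=0$, use the parity relations $\wti g(0^+)=\pm\wti g(0^-)$, $\wti g^{\,\prime}(0^+)=\mp\wti g^{\,\prime}(0^-)$ to collapse the coupled condition to a separated one at $x=0$ via the computation \eqref{4.30a}--\eqref{4.34a}, observe that the limit point hypothesis at $\pm a$ removes any conditions there, and invoke Remark~\ref{Rem:3.3}\,$(iv)$ for the ordering. The paper's own proof is in fact considerably terser than yours, simply pointing to the proof of \eqref{eq:4.16.0} as a template; your additional care with the choice of $u^+,\hatt u^+$ and the sign bookkeeping is appropriate but not a departure in method.
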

\begin{proof}
This follows from a completely analogous argument as given in the proof of the decomposition \eqref{eq:4.16.0}, where the even/odd subspaces with respect to a reflection about $x=0$ are introduced. Imposing the conditions
\begin{equation}
\wti g(0^+)=\wti g(0^-)\quad\mbox{and}\quad {\wti g}^{\,\prime}(0^+)=-{\wti g}^{\,\prime}(0^-)
\end{equation}
in the even case and
\begin{equation}
\wti g(0^+)=-\wti g(0^-)\quad\mbox{and}\quad {\wti g}^{\,\prime}(0^+)={\wti g}^{\,\prime}(0^-)
\end{equation}
for the odd case, the same arguments extend to this setting yielding \eqref{eq:4.37}. Moreover, note that no additional generalized boundary conditions at $\pm a$ are required, since $\pm a$ are assumed to be limit point.

The ordering of nonnegative extensions (if $S_{min}$ is strictly positive) follows as before by applying Remark \ref{Rem:3.3} $(iv)$.
\end{proof}

\begin{remark}\label{r4.10}
$(i)$ Analogous results hold whenever $x=\pm a$ are assumed to be in the limit circle nonoscillatory case with $x=0^\mp$ both being limit point.\\[1mm]
$(ii)$ If $x=\pm a\in\bbR$ are in the limit circle nonoscillatory case with fixed separated boundary condition at $x=\pm a$, say $\beta'\in(0,\pi]$, the analog of Theorem \ref{t4.9} follows immediately as above. In particular, denoting by $S_{\beta',R_0}$ the self-adjoint extension of $S_{min}$ with fixed $\beta'$-separated boundary conditions at $x=\pm a$ and $R_0$-coupled boundary conditions at $x=0^\mp$, the same arguments as before yield that $S_{\beta',R_0}$ is unitarily equivalent to the direct sum
\begin{equation}
T^{+}_{\a,\beta'}\oplus T^{+}_{\a',\beta'}
\end{equation}
in $L^2((0,a);rdx)\oplus L^2((0,a);rdx)$ with $\a,\a'$ given as in \eqref{4.17}. The results on nonnegative extensions (if $S_{min}$ is strictly positive) also remain the same.

Once again, one can interchange the behaviors of the endpoints by fixing a separated boundary conditions at $x=0^\mp$ and coupling $x=\pm a$ instead.
\end{remark}

Next, we impose a coupled boundary condition at $x=\pm a$ and a coupled boundary condition at $x=0^\mp$ (i.e., coupling $x=-a$ with $x=a$ and separately coupling $x=0^-$ with $x=0^+$). When applying these couplings, neither one of the conditions interacts with the other, hence, the same requirements apply as before when considering which of these are invariant with respect to reflection. We will denote by $R_a$ the coupling conditions on $x=\pm a$, and similarly, those at $x=0^\mp$ by $R_0$. We will denote this self-adjoint extension of $S_{min}$ by $S_{R_a,R_0}$ (suppressing the $\eta$ terms since both are necessarily zero). 

\begin{theorem}\label{t4.11}
Assume Hypothesis \ref{h4.8} and that all endpoints, $x=\pm a,0^\mp$ are in the limit circle nonoscillatory case. Let $S_{R_a,R_0}$ denote the extension of the minimal operator associated with $s$ satisfying $R_a$-coupled boundary conditions at $x=\pm a$ $($with equal diagonal entries$)$ and $R_0$-coupled boundary conditions at $x=0^\mp$ $($with equal diagonal entries$)$.
Then $S_{R_a,R_0}$ is unitarily equivalent to the direct sum
\begin{equation}
T^{+}_{\a,\beta}\oplus T^{+}_{\a',\beta'}
\end{equation}
in $L^2((0,a);rdx)\oplus L^2((0,a);rdx)$ with $\a,\a'$ corresponding to $R_0$ as
in \eqref{4.17} and $\beta,\beta'$ corresponding to $R_a$ in \eqref{4.17}.

Moreover, given two nonnegative extensions, $S_{R_a,R_0}$, $S_{\hatt R_a,\hatt R_0}$, one has $S_{R_a,R_0}\leq S_{\hatt R_a,\hatt R_0}$ if and only if $\a\leq \hatt{\a}$, $\b\leq \hatt \b$, $\a'\leq \hatt{\a}'$, and $\b'\leq \hatt \b'$ with each given as described above $($through \eqref{4.17}$)$.
\end{theorem}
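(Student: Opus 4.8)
The plan is to mirror the proof of Theorem \ref{t2.17}, since the two-interval problem with all four endpoints limit circle and coefficients symmetric about $x=0$ is structurally identical to the one-interval symmetric problem once the right objects are identified. First I would introduce the reflection operator $P$ on $L^2((-a,0);rdx)\oplus L^2((0,a);rdx)$ induced by $x\mapsto -x$ (which swaps the two summands), observe that Hypothesis \ref{h4.8} makes $S_{R_a,R_0}$ commute with $P$ exactly because $\eta=0$ and the diagonal entries of $R_a$ and $R_0$ are equal (this is the content of Proposition \ref{t.2.14} transported to the two-interval setting), and decompose $S_{R_a,R_0}=S^+\oplus S^-$ with respect to $\ker(P\mp I)$. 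The unitaries $U_\pm$ restricting symmetric/antisymmetric functions to $(0,a)$ (with the $\sqrt2$ normalization, exactly as in Theorem \ref{t2.16}) then carry $S^\pm$ to operators on $L^2((0,a);rdx)$.

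The key computation is to see what boundary conditions these restricted operators satisfy at each of the two endpoints $x=0$ and $x=a$. At $x=0$: the $R_0$-coupled condition linking $\wti g(0^-),\wti g^{\,\prime}(0^-)$ to $\wti g(0^+),\wti g^{\,\prime}(0^+)$, combined with the symmetry relations $\wti g(0^+)=\pm\wti g(0^-)$, $\wti g^{\,\prime}(0^+)=\mp\wti g^{\,\prime}(0^-)$, collapses to a single separated condition at $x=0$; the verification that the two resulting scalar equations are consistent (using $\det(R_0)=1$) and the explicit identification of the angle is verbatim the calculation around \eqref{4.30a}--\eqref{4.34a}, producing $\alpha$ for the odd sector and $\alpha'$ for the even sector as in \eqref{4.17}. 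At $x=a$: the same manipulation applied to the $R_a$-coupled condition linking $\wti g(-a),\wti g^{\,\prime}(-a)$ to $\wti g(a),\wti g^{\,\prime}(a)$, again using the symmetry relations to convert the pair $(\wti g(-a),\wti g^{\,\prime}(-a))$ into $(\pm\wti g(a),\mp\wti g^{\,\prime}(a))$, yields a separated condition at $x=a$ with angle $\beta$ (odd sector) or $\beta'$ (even sector), again via \eqref{4.17} but now applied to $R_a$. Together with $U_\pm\tau f = \tau^{(0,a)} U_\pm f$, this gives $S^-\cong T^+_{\alpha,\beta}$ and $S^+\cong T^+_{\alpha',\beta'}$, hence the claimed direct-sum decomposition.

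For the ordering statement I would argue as follows. Since the decomposition is a unitary equivalence onto a direct sum, $S_{R_a,R_0}\leq S_{\hatt R_a,\hatt R_0}$ holds if and only if $T^+_{\alpha,\beta}\oplus T^+_{\alpha',\beta'}\leq T^+_{\hatt\alpha,\hatt\beta}\oplus T^+_{\hatt\alpha',\hatt\beta'}$, and a direct sum of nonnegative operators is $\leq$ another such direct sum if and only if this holds summand by summand (this uses that the quadratic form of a direct sum is the sum of the quadratic forms, and that the two Hilbert-space summands are orthogonal). So it remains to show $T^+_{\alpha,\beta}\leq T^+_{\hatt\alpha,\hatt\beta}$ iff $\alpha\leq\hatt\alpha$ and $\beta\leq\hatt\beta$ (and likewise for the primed pair). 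This is precisely the ordering of nonnegative self-adjoint extensions with separated boundary conditions at both ends: the forward direction follows by comparing on the appropriate minimal operators (fix $\beta$, vary $\alpha$, invoke \eqref{eq:3.19}; then fix $\alpha$, vary $\beta$), and the converse is Remark \ref{r3.5}$(iv)$, which asserts that $T_{\alpha,\beta}\leq T_{\hatt\alpha,\hatt\beta}$ among nonnegative separated extensions forces $\alpha\leq\hatt\alpha$ and $\beta\leq\hatt\beta$ (one cannot have $\alpha<\hatt\alpha$ while $\beta>\hatt\beta$). Chaining these equivalences across all four sectors gives the stated criterion.

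The main obstacle I anticipate is bookkeeping rather than conceptual: one must be careful that the reflection relations on generalized boundary values at $x=\pm a$ come out with exactly the sign pattern that makes the $R_a$-calculation formally identical to the $R_0$-calculation (in the one-interval case the two endpoints $a,b$ played asymmetric roles because of the sign convention in \eqref{2.25}, whereas here both ``outer'' endpoints are genuinely reflected into one another), and that the four angles $\alpha,\alpha',\beta,\beta'$ are assigned to the correct sectors so that the odd sector pairs $\alpha$ with $\beta$ and the even sector pairs $\alpha'$ with $\beta'$. Once the sign conventions are pinned down, every individual step is a citation to an already-proven fact (Proposition \ref{t.2.14}, the calculation of Theorem \ref{t2.17}, \eqref{eq:3.19}, Remark \ref{r3.5}$(iv)$) or a routine orthogonal-direct-sum argument.
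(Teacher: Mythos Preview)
Your proposal is correct and follows essentially the same approach as the paper: decompose with respect to the even/odd subspaces of the reflection $P$, observe that the symmetry relations $\wti g(0^+)=\pm\wti g(0^-)$, $\wti g^{\,\prime}(0^+)=\mp\wti g^{\,\prime}(0^-)$ and $\wti g(-a)=\pm\wti g(a)$, $\wti g^{\,\prime}(-a)=\mp\wti g^{\,\prime}(a)$ allow the calculation \eqref{4.30a}--\eqref{4.34a} from Theorem~\ref{t2.17} to be applied independently at each coupled pair (since $R_0$ only couples $0^\mp$ and $R_a$ only couples $\pm a$), and then invoke Remark~\ref{r3.5}\,$(iv)$ for the ordering. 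Your elaboration of the ordering step into a direct-sum reduction followed by the separated-boundary-condition comparison is a bit more explicit than the paper's one-line citation, but the argument is the same.
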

\begin{proof}
Again, decomposing into even/odd subspaces with respect to the reflection about $x=0$, we obtain the requirements
\begin{align}
{\wti g}(0^+)=\pm{\wti g}(0^-)&\mbox{\quad and \quad}{\wti g}^{\,\prime}(0^+)=\mp{\wti g}^{\,\prime}(0^-),\\
{\wti g}(-a)=\pm{\wti g}(a)&\mbox{\quad and \quad}{\wti g}^{\,\prime}(-a)=\mp{\wti g}^{\,\prime}(a),
\end{align}
for $g$ to be an element of the even/odd subspace, respectively. Since we are only considering extensions for which the generalized boundary conditions at $0^-$ are coupled with those at $0^+$ and the generalized boundary conditions at $-a$ are only coupled with those at $a$, we can again mimic the argument as in the proof of the decomposition \eqref{eq:4.16.0} for each pair of coupled boundary points.

The ordering follows from Remark \ref{r3.5} $(iv)$.
\end{proof}

We end this section by remarking that one could also consider the case where all four endpoints are coupled together, studying those choices which are invariant under reflection. As this becomes more involved and technical than the results above, we will revisit this in an upcoming work.

\section{Examples and an application to integral inequalities}\label{s5}

We now consider two examples: the classic Bessel equation and a symmetric Bessel-type potential, the latter of which can also be understood as a perturbed symmetric Heun problem by \cite[App. A]{GPS24}. There is an enormous amount of literature on Bessel-type operators, and we refer the interested reader to \cite{GPS21,GPS24} and the references found therein.

\begin{example}[Bessel equation]
We begin by considering the Bessel expression
\begin{equation}
-\frac{d^2}{dx^2}+\big[\gamma^2-(1/4)\big](x-a)^{-2},\quad \gamma\in[0,1),\ x\in(a,d),\ -\infty<a<d<\infty.
\end{equation}
The range of $\gamma$ guarantees that the given problem is in the limit circle nonoscillatory case at both endpoints. Principal and nonprincipal solutions of $\tau y=0$ near $x=a$ are given by
\begin{align}
u_{a,\g}(0, x) &= (x-a)^{(1/2) + \g}, \quad \g \in [0,1), \; x \in (a,d),      \lb{5.3} \\
\hatt u_{a,\g}(0, x) &= \begin{cases} (2 \g)^{-1} (x-a)^{(1/2) - \g}, & \g \in (0,1), \\
(x-a)^{1/2} \ln(1/(x-a)), & \g =0, \end{cases} \quad x \in (a,d).  \lb{5.4} 
\end{align}
The generalized boundary values at $x=a$ then become 
\begin{align}
\wti g(a) &= \begin{cases} \lim_{x \downarrow a} g(x)\big/\big[(2 \g)^{-1} (x-a)^{(1/2)-\g}\big], &  
\gamma \in (0,1), \\[1mm]
\lim_{x \downarrow a} g(x)\big/\big[(x-a)^{1/2} \ln(1/(x-a))\big], & \gamma =0, 
\end{cases} \\
\wti g^{\, \prime} (a) &= \begin{cases} \lim_{x \downarrow a} \big[g(x) - \wti g(a) (2 \gamma)^{-1} (x-a)^{(1/2)-\g}\big]\big/(x-a)^{(1/2)+\g}, 
&\g \in (0,1), \\[1mm]
\lim_{x \downarrow a} \big[g(x) - \wti g(a) (x-a)^{1/2} \ln(1/(x-a))\big]\big/(x-a)^{1/2}, & \g =0.
\end{cases}
\end{align}
Furthermore, one readily verifies that
\begin{align}
\norm{u_{a,\g}}_{L^2}^2&=(2+2\g)^{-1}(b-a)^{2+2\g},\label{5.6a}\\
\langle\hatt u_{a,\g},u_{a,\g}\rangle_{L^2}&=\begin{cases}
(4\g)^{-1}(b-a)^2, & \g\in(0,1),\\
4^{-1}(b-a)^2[1+2\ln(1/(b-a))], & \g=0.
\end{cases}
\end{align}
Therefore, $\hatt v_{a,\g}$ in \eqref{B.3.8} for $x \in (a,d)$ is explicitly given by
\begin{align}\label{5.8a}
\hatt v_{a,\g}(0,x)&=\begin{cases}
(2\g)^{-1}(x-a)^{(1/2) - \g}-\dfrac{1+\g}{2\g(b-a)^{2\g}}(x-a)^{(1/2) + \g},& \g\in(0,1),\\[2mm]
(x-a)^{1/2} \left(\ln(1/(x-a))-\dfrac{1}{2}+\ln(b-a)\right),& \g=0,
\end{cases}
\end{align}
and one notes the general formula
\begin{equation}\label{5.9a}
\norm{\hatt v_{a,\g}}^2_{L^2}=\norm{\hatt u_{a,\g}}^2_{L^2}+\big(\langle\hatt u_{a,\g},u_{a,\g}\rangle_{L^2}-2\big)\frac{\langle\hatt u_{a,\g},u_{a,\g}\rangle_{L^2}}{\norm{u_{a,\g}}_{L^2}^2}.
\end{equation}

Now, since $x=d$ is a regular point for this problem, the generalized boundary values are simply the function and derivative value at $d$. Hence, the values of \eqref{5.3}, \eqref{5.4}, \eqref{5.8a}, and their derivatives at $x=d$ along with the values in \eqref{5.6a} and \eqref{5.9a} yield all the required values to completely parameterize all nonnegative self-adjoint extensions via Section \ref{subgen}, illustrating the ease of finding the needed values.
\end{example}

\begin{example}[Symmetric Bessel equation]\label{ex3.4} 
For this example, let $d_{(a,b)}(x)$ represent the distance to the boundary function for $a,b\in\bbR,\ a<b,$ defined by
\begin{equation}
d_{(a,b)}(x)=\begin{cases}
x-a,& x\in(a,(b+a)/2],\\
b-x,& x\in[(b+a)/2,b),
\end{cases}
\end{equation}
and consider the Sturm--Liouville expression
\begin{align}
\tau_\g=-\dfrac{d^2}{dx^2}+\big[\gamma^2-(1/4)\big]d^{-2}_{(a,b)}(x),\quad
\gamma\in[0,1),\, x\in(a,b).
\end{align}
That is, we consider the classic Bessel expression on the two halves of the interval giving a symmetric confining potential with Bessel-type singularities at both endpoints. The range of $\gamma$ guarantees that the given problem is in the limit circle nonoscillatory case at both endpoints so that we can apply our previous results.

To apply the results of Sections \ref{subnonneg} and \ref{s4}, it suffices to focus our analysis around one of the singular endpoints, which we choose to be $x=a$. As such, the principal and nonprincipal solutions of $\tau y=0$ near $x=a$ on the first half of the interval are simply given by \eqref{5.3} and \eqref{5.4} with $d=(b+a)/2$, and the generalized boundary values at $x=a$ are the same as the previous example.

We now explicitly give the spectrum of the Friedrichs extension for this problem via \eqref{4.15}. Given the generalized boundary values, one readily verifies that the principal behaving solution $($see Appendix \ref{App}$)$ on $(a,(b+a)/2]$ is given by
\begin{equation}
\varphi_\g(z,x,a)=2^\g \Gamma(1+\g) z^{- \g/2}(x-a)^{1/2}J_\g\big(z^{1/2}(x-a)\big),\quad x\in(a,(b+a)/2],\; z\in\bbC,
\end{equation}
where $J_{\g}(\dott)$ is the standard Bessel functions of order $\g \in \bbR$ 
$($cf.\ \cite[Ch.~10]{DLMF}$)$.

The eigenvalues for the Friedrichs extension, $T_{F,\g}$, of the problem on $(a,b)$ will be given by the union of the Friedrichs and mixed Dirichlet--Neumann eigenvalues from the half-interval problem by Theorem \ref{t2.16}. By \eqref{2.43} on the half-interval, these eigenvalues are given, respectively, by the zeros $($w.r.t. $z\in\bbC$$)$ of 
\begin{equation}
z^{- \g/2}J_\g\big(z^{1/2}(b-a)/2\big),
\end{equation}
and the zeros $($w.r.t. $z\in\bbC$$)$ of
\begin{equation}\label{5.9}
z^{- \g/2}\Big[(x-a)^{1/2}J_\g\big(z^{1/2}(x-a)\big)\Big]'\Big|_{x=\frac{b+a}{2}}.
\end{equation}
Expanding \eqref{5.9} and letting $y=z^{1/2}(b-a)/2$ shows $z$ being a zero of \eqref{5.9} is equivalent to $z=4\lambda^2_{\g,k}/(b-a)^2$ where $\{\lambda_{\g,k}\}_{k\in\bbN}$ are the positive zeros of
\begin{align}\label{5.15}
G_{\g}(y)&=y^{-\g}[(1-2\g) J_\g(y)+2y J_{\g-1}(y)]  \notag\\
&=y^{-\g}\bigg( J_\g(y)+2y \frac{d}{dy}[J_\g(y)]\bigg),\quad \g\in[0,1).
\end{align}
We remark that the first positive zero, $\lambda_{\g,1}$, is sometimes referred to as Lamb's constant $($see the brief discussion in \cite{AW07}$)$.
One concludes that
\begin{equation}
\sigma(T_{F,\g})=\sigma(T^{(a,(b+a)/2)}_{F,\g})\cup\sigma(T^{(a,(b+a)/2)}_{D-N,\g}),\quad \gamma\in[0,1),
\end{equation}
where
\begin{align}\label{5.13}
\sigma(T^{(a,(b+a)/2)}_{F,\g})&=\bigcup_{k\in\bbN}\bigg\{ \frac{4j^2_{\gamma,k}}{(b-a)^2}\bigg\},\quad
\sigma(T^{(a,(b+a)/2)}_{D-N,\g})=\bigcup_{k\in\bbN}\bigg\{ \frac{4\lambda^2_{\g,k}}{(b-a)^2}\bigg\},
\end{align}
with $\{j_{\g,k}\}_{k\in\bbN}$ denoting the positive zeros of the Bessel function $J_\g(\dott)$.

Since $T_{min,\g}$ is positive for this example, we can now apply the previous results to characterize the nonnegative extensions for this problem. In particular, applying \eqref{4.14} shows that the $(\a,\a)$-extension is nonnegative for $\alpha\in[\nu,\pi]$ where
\begin{equation}\label{5.14}
\nu=\begin{cases}
\cot^{-1}\big((2\g)^{-1}[(1-2\g)/(1+2\g)][2/(b-a)]^{2\g}\big),& \g\in(0,1),\\[1mm]
\cot^{-1}\big(\ln(2/(b-a))-2\big),& \g=0.
\end{cases}
\end{equation}
For symmetric coupled boundary conditions, we refer to \eqref{4.17} and \eqref{4.24a} noting that in this case $\mu$ is defined as in \eqref{5.14} and
\begin{equation}
\nu=\begin{cases}
\cot^{-1}\big((2\g)^{-1}[2/(b-a)]^{2\g}\big),& \g\in(0,1),\\[1mm]
\cot^{-1}\big(\ln(2/(b-a))\big),& \g=0.
\end{cases}
\end{equation}
\end{example}

\subsection{An application to integral inequalities}

We finish with a brief remark regarding applying these results to integral inequalities. Example \ref{ex3.4} allows one to prove (following the proof of \cite[Thm. 3.1]{GPS21} for instance) the following integral inequality that was originally proven in \cite{AW07}:
\begin{align}\label{5.16}
\int_a^b  |f'(x)|^2\, dx  \geq \big[(1/4)-\g^2\big]\int_a^b d_{(a,b)}^{-2}(x)|f(x)|^2\, dx
+\frac{4\lambda_{\g,1}^2}{(b-a)^2}\int_a^b & |f(x)|^2\, dx,  \notag \\
\g\in[0,1),\ f  \in H_0^1((a,b)),\ a,b\in\bbR&,
\end{align}
where $H_0^1 ((a,b))$ denotes the standard Sobolev space on $(a,b)$ obtained upon completion of $C_0^\infty ((a,b))$ in the norm of $H^1 ((a,b))$.

In fact, one concludes from the Theorem \ref{t2.16} that the optimal constant on the integral of $|f|^2$ for an integral inequality (on an appropriate function space such as the Friedrichs domain) related to a Sturm--Liouville problem with fixed symmetric coefficient functions will simply be the lowest eigenvalue of the mixed Dirichlet--Neumann extension of the corresponding half-interval problem.
In particular, this illustrates why the multiple of Lamb's constant, $\lambda_{\g,1}$, naturally appears in the integral inequality \eqref{5.16} since it is the smallest eigenvalue of the half-interval Dirichlet--Neumann extension by \eqref{5.13}.

\appendix

\section{Symmetries of the fundamental system of solutions}\label{App}

This appendix includes a discussion
of the fundamental system of solutions and a direct factorization of
the characteristic function associated with symmetric coefficient problems. We begin by introducing the fundamental system of solutions $\theta(z,x,a)$, $\varphi(z,x,a)$ of the nonoscillatory quasi-regular problem $\tau y=z y$ defined by
\begin{align}
\wti\theta(z,a,a)=\wti\varphi^{\, \prime}(z,a,a)=1,\quad \wti\theta^{\, \prime}(z,a,a)=\wti\varphi(z,a,a)=0,
\end{align}
such that $W\big(\theta(z,\dott,a),\varphi(z,\dott,a)\big)=1,$
where $\wti{g}(\dott)$ is given as in Theorem \ref{t2.10}. Notice that this implies that the solution $\varphi$ is principal at $x=a$ whereas the solution $\theta$ is nonprincipal at $x=a$.

For later use, we next introduce characteristic functions (see \cite{FPS24} for more details) by means of the generalized boundary values for $g,g^{[1]}\in\ACl$,
\begin{align}
U_{\a}(g)&=\wti g(a)\cos(\al)+\wti g^{\, \prime}(a)\sin(\al),\\
U_{\b}(g)&=\wti g(b)\cos(\be)-\wti g^{\, \prime}(b)\sin(\be),
\end{align}
in the case ($i$) of separated boundary conditions in Theorem \ref{t2.10}, and
\begin{align}
V_{\eta,R,1}(g)&=\wti g(b)-e^{i\eta}R_{11}\wti g(a)-e^{i\eta}R_{12}\wti g^{\, \prime}(a),\\
V_{\eta,R,2}(g)&=\wti g^{\, \prime}(b)-e^{i\eta}R_{21}\wti g(a)-e^{i\eta}R_{22}\wti g^{\, \prime}(a),
\end{align}
in the case ($ii$) of coupled boundary conditions in Theorem \ref{t2.10}. We can then define the \emph{characteristic functions}
\begin{align}
F_{\a,\b}(z)&=\det\begin{pmatrix}U_{\a}(\theta(z,\dott,a))& U_{\a}(\varphi(z,\dott,a))\\ U_{\b}(\theta(z,\dott,a)) & U_{\b}(\varphi(z,\dott,a))\end{pmatrix},\\
F_{\eta,R}(z)&=\det\begin{pmatrix}V_{\eta,R,1}(\theta(z,\dott,a))& V_{\eta,R,1}(\varphi(z,\dott,a))\\ V_{\eta,R,2}(\theta(z,\dott,a)) & V_{\eta,R,2}(\varphi(z,\dott,a))\end{pmatrix}.
\end{align}

By construction, eigenvalues or $T_{A,B}$ are determined via $F_{A,B}(z)=0$, with multiplicity of eigenvalues of $T_{A,B}$ corresponding to multiplicity of zeros of $F_{A,B}$, and $F_{A,B}(z)$ is entire with respect to $z$ (where $A,B$ represents $\alpha,\beta$ in the case of separated boundary conditions and $\eta,R$ in the context of coupled boundary conditions). In particular, for $T_{\a,\b}$, one has 
\begin{align}\label{2.43}
\begin{split}
F_{\a,\b}(z)&=\cos(\a)[-\sin(\b)\ \wti\varphi^{\, \prime}(z,b,a)+\cos(\b)\ \wti\varphi(z,b,a)] \\
&\quad\,-\sin(\a)[-\sin(\b)\ \wti\theta^{\, \prime}(z,b,a)+\cos(\b)\ \wti\theta(z,b,a)],\quad \a,\b\in(0,\pi],
\end{split}
\end{align}
and for $T_{\eta,R}$, that is, coupled boundary conditions, one has
\begin{align}\label{2.44}
\notag F_{\eta,R}(z)&=e^{i\eta}\big(R_{12}\wti\theta^{\, \prime}(z,b,a)-R_{22}\wti\theta(z,b,a)+R_{21}\wti\varphi(z,b,a)-R_{11}\wti\varphi^{\, \prime}(z,b,a)\big)\\
&\quad\,+e^{2i\eta}+1,\quad \eta\in[0,\pi),\ R\in SL(2,\R).
\end{align}

Assuming Hypothesis \ref{h2.13}, now consider the two fundamental systems of solutions, $\{\varphi_a,\theta_a\}$ for $\tau\big|_{(a,(b+a)/2)} y=zy$ and $\{\varphi_b,\theta_b\}$ for $\tau\big|_{((b+a)/2,b)} y=zy$ such that the solutions are reflections of one another about the line $x=(b+a)/2$, that is, 
\begin{align}
\begin{split}
\varphi_a(z,x)&=\varphi_b(z,b-x+a)\\
\theta_a(z,x)&=\theta_b(z,b-x+a),\quad x\in(a,(b+a)/2],
\end{split}
\end{align}
with
\begin{align}
\begin{split}\label{4.5}
\wti \varphi_a(z,a)=\wti \theta'_a(z,a)=0=\wti \varphi_b(z,b)=-\wti \theta_b'(z,b),\\
\wti \varphi'_a(z,a)=\wti \theta_a(z,a)=1=-\wti \varphi'_b(z,b)=\wti \theta_b(z,b),
\end{split}
\end{align}
where the generalized boundary values at $x=b$ are defined by using the reflection of the principal and nonprincipal solutions at $x=a$.
Note that these assumptions imply $W(\theta_a,\varphi_a)=1=-W(\theta_b,\varphi_b)$ and
\begin{align}
\begin{split}
\varphi^{[1]}_a(z,x)&=-\varphi_b^{[1]}(z,b-x+a)\\
\theta^{[1]}_a(z,x)&=-\theta_b^{[1]}(z,b-x+a),\quad x\in(a,(b+a)/2].
\end{split}
\end{align}

Now to construct the fundamental system of solutions to $\tau y=zy$ on the entire interval $(a,b)$, one can utilize the fundamental systems of solutions on either half of the interval to define
\begin{equation}\label{4.8}
\varphi(z,x,a)=\begin{cases}
\varphi_a(z,x),& x\in(a,(b+a)/2],\\
\big(1-2\theta_a(z,(b+a)/2)\varphi_a^{[1]}(z,(b+a)/2)\big)\varphi_b(z,x)\\
\quad+2\varphi_a(z,(b+a)/2)\varphi_a^{[1]}(z,(b+a)/2)\theta_b(z,x),& x\in[(b+a)/2,b),
\end{cases}
\end{equation}
\begin{equation}\label{4.9}
\theta(z,x,a)=\begin{cases}
\theta_a(z,x),& x\in(a,(b+a)/2],\\
\big(1+2\varphi_a(z,(b+a)/2)\theta_a^{[1]}(z,(b+a)/2)\big)\theta_b(z,x)\\
\quad -2\theta_a(z,(b+a)/2)\theta_a^{[1]}(z,(b+a)/2)\varphi_b(z,x),& x\in[(b+a)/2,b).
\end{cases}
\end{equation}
One readily verifies that both functions satisfy $\tau y=zy$, they and their quasi-derivative are absolutely continuous locally on $(a,b)$, and $W(\theta,\varphi)=1$. We also remark that the coefficient on $\varphi_b$ in \eqref{4.8} (which is $-\wti{\varphi}^{\, \prime}(z,b,a)$) can be written
\begin{equation}\label{4.10}
1-2\theta_a(z,(b+a)/2)\varphi_a^{[1]}(z,(b+a)/2)=W(\theta_a(z,\,\cdot\,),\varphi_b(z,\,\cdot\,))\big|_{x=(b+a)/2}.
\end{equation}
One can also verify $\wti{\varphi}^{\, \prime}(z,b,a)$ and $\wti{\theta}(z,b,a)$ are equal by applying $W(\theta_a,\varphi_a)=1$ as
\begin{align}\label{4.11}
&\wti{\theta}(z,b,a)-\wti{\varphi}^{\, \prime}(z,b,a)=\\
&1+2\varphi_a(z,(b+a)/2)\theta_a^{[1]}(z,(b+a)/2)+1-2\theta_a(z,(b+a)/2)\varphi_a^{[1]}(z,(b+a)/2)=0.\notag
\end{align}
This illustrates how the form of the solutions given in \eqref{4.8} and \eqref{4.9} allows one to easily compute the boundary values at the endpoint $x=b$ in terms of combinations of values at the midpoint for the function normalized at $x=a$.

\subsection{Factorizations of the characteristic functions} In this subsection, we illustrate how one can factorize the characteristic functions associated to the symmetric problems considered in Section \ref{s4}.

The separated case follows immediately from setting $\beta=\alpha$ in \eqref{2.43} and applying generalized boundary conditions to \eqref{4.8} and \eqref{4.9} to write (suppressing the arguments $(z,(b+a)/2)$ for $\varphi_a,\varphi_a^{[1]},\theta_a,\theta_a^{[1]}$ for brevity)
\begin{align}
F_{\alpha,\alpha}(z)&=\cos(\a)[-\sin(\a)(2\theta_a\varphi_a^{[1]}-1)+2\cos(\a)\varphi_a\varphi_a^{[1]}] \notag\\
&\quad\,-\sin(\a)[-2\sin(\a)\theta_a \theta_a^{[1]}+\cos(\a)(1+2\varphi_a \theta_a^{[1]})]  \notag\\
&=-2(\cos(\alpha)\varphi_a-\sin(\alpha)\theta_a)(\sin(\alpha)\theta_a^{[1]}-\cos(\alpha)\varphi_a^{[1]} )  \notag \\
&=-2F^{(a,(b+a)/2)}_{\alpha,\pi}(z)F^{(a,(b+a)/2)}_{\alpha,\pi/2}(z).
\end{align}
Here, $F^{(a,(b+a)/2)}_{A,B}(z)$ denotes the characteristic function given by \eqref{2.43} when considering $\tau\big|_{(a,(b+a)/2)}$ with regular boundary conditions at $x=(b+a)/2$. 

For the coupled case, we begin with $(\eta,R)=(0,-I_2)$ (i.e., the antiperiodic case). Using that $\wti{\varphi}^{\, \prime}(z,b,a)=\wti{\theta}(z,b,a)$ (see \eqref{4.11}), we find from \eqref{2.44}, \eqref{4.8}, and \eqref{4.9} with antiperiodic boundary conditions,
\begin{align}
F_{0,-I_2}(z)&= 2+\big(\wti{\theta}(z,b,a)+\wti{\varphi}^{\, \prime}(z,b,a)\big) \\
&=2\theta_a(z,(b+a)/2)\varphi_a^{[1]}(z,(b+a)/2)=-2F^{(a,(b+a)/2)}_{\pi/2,\pi}(z)F^{(a,(b+a)/2)}_{\pi,\pi/2}(z). \notag
\end{align}
The other cases with $R_{11}=-1$ but only one of $R_{12}$ or $R_{21}$ being zero, or $R_{11}=1$, follow similarly, so we omit them here.

We next treat the case whenever $R_{11}\neq\pm1$. From \eqref{2.44}, \eqref{4.8}, and \eqref{4.9}, for general coupled boundary conditions satisfying $\eta=0,\ R_{11}=R_{22}$, one uses that $\wti{\varphi}^{\, \prime}(z,b,a)=\wti{\theta}(z,b,a)$ to write (suppressing the arguments $(z,(b+a)/2)$ for $\varphi_a,\varphi_a^{[1]},\theta_a,\theta_a^{[1]}$ for brevity)
\begin{align}\label{4.22}
F_{0,R'}(z)&=R_{12}\wti\theta^{\, \prime}(z,b,a)+R_{21}\wti\varphi(z,b,a)-2R_{11}\wti\theta(z,b,a)+2  \notag\\
&=2R_{12}\theta_a\theta_a^{[1]}+2R_{21}\varphi_a\varphi_a^{[1]}-4R_{11}\varphi_a\theta_a^{[1]}-2R_{11}+2 \\
&=-2\big[(R_{11}+1)\varphi_a\theta_a^{[1]}+(R_{11}-1)\theta_a\varphi_a^{[1]}-R_{12}\theta_a\theta_a^{[1]}-R_{21}\varphi_a\varphi_a^{[1]}\big]. \notag
\end{align}
We used that $W(\theta_a,\varphi_a)=1$ to introduce appropriate terms to arrive at the last equality in \eqref{4.22}. 
Next, one readily verifies from \eqref{2.43} for $\alpha,\alpha'\in(0,\pi)$,
\begin{align}\label{4.23}
&F^{(a,(b+a)/2)}_{\alpha,\pi}(z)F^{(a,(b+a)/2)}_{\alpha',\pi/2}(z)  \\
&\quad =\sin(\alpha)\sin(\alpha')\big[\cot(\alpha)\varphi_a\theta_a^{[1]}+\cot(\alpha')\theta_a\varphi_a^{[1]}-\theta_a\theta_a^{[1]}-\cot(\alpha)\cot(\alpha')\varphi_a\varphi_a^{[1]}\big].\notag
\end{align}
Now suppose that $R_{11}\neq\pm1$ and compare \eqref{4.22} to \eqref{4.23} in order to write
\begin{align}
F_{0,R'}(z)&=-2(R_{11}^2-1)\bigg[\frac{1}{R_{11}-1}\varphi_a\theta_a^{[1]}+\frac{1}{R_{11}+1}\theta_a\varphi_a^{[1]}-\frac{R_{12}}{R_{11}^2-1}\theta_a\theta_a^{[1]}  \notag\\
&\hspace{3cm}-\frac{R_{21}}{R_{11}^2-1}\varphi_a\varphi_a^{[1]}\bigg] \\
&=-2\frac{R_{12}}{\sin(\alpha)\sin(\alpha')}F^{(a,(b+a)/2)}_{\alpha,\pi}(z)F^{(a,(b+a)/2)}_{\alpha',\pi/2}(z),\ R_{11}\neq\pm1,\, \alpha,\alpha'\in(0,\pi),\notag
\end{align}
by identifying for $R_{11}\neq\pm1$, $\cot(\alpha)=R_{21}/(R_{11}-1)$ and $\cot(\alpha')=R_{21}(R_{11}+1)$.

\subsection{The Krein--von Neumann extension} We end by considering the following important extension, which is exactly the Krein--von Neumann extension $T_{0,R_K}$ of $T_{min}$ whenever $T_{min}$ is strictly positive (i.e., $T_{min} \geq \varepsilon I$ for some 
$\varepsilon > 0$) by Theorem \ref{t2.12} $(ii)$. It is also the unique self-adjoint extension that has zero as an eigenvalue with multiplicity two. In particular, we consider
\begin{align}
\begin{split} 
& T_{0,R'} f = \tau f,    \\
& f \in \dom(T_{0,R'})=\bigg\{g\in\dom(T_{max}) \, \bigg| \begin{pmatrix} \wti g(b) 
\\ {\wti g}^{\, \prime}(b) \end{pmatrix} = R' \begin{pmatrix}
\wti g(a) \\ {\wti g}^{\, \prime}(a) \end{pmatrix} \bigg\},  
\end{split}
\end{align}
where, by applying \eqref{4.8}, \eqref{4.9}, and \eqref{4.11},
\begin{align}\label{4.29}
&R'=\begin{pmatrix} \wti{\theta}(0,b,a) & \wti \varphi(0,b,a)\\
\wti{\theta}^{\, \prime}(0,b,a) & \wti \varphi^{\, \prime}(0,b,a)
\end{pmatrix}=\begin{pmatrix} \wti{\theta}(0,b,a) & \wti \varphi(0,b,a)\\
\wti{\theta}^{\, \prime}(0,b,a) & \wti \theta(0,b,a)
\end{pmatrix}\\
&\hspace{-2pt}=\begin{pmatrix} 1+2\varphi_a(0,(b+a)/2)\theta_a^{[1]}(0,(b+a)/2) &  2\varphi_a(0,(b+a)/2)\varphi_a^{[1]}(0,(b+a)/2)\\
2\theta_a(0,(b+a)/2)\theta_a^{[1]}(0,(b+a)/2) & 2\theta_a(0,(b+a)/2)\varphi_a^{[1]}(0,(b+a)/2)-1
\end{pmatrix}\hspace{-3pt}.\notag
\end{align}

Hence when assuming Hypothesis \ref{h2.13}, the extension defined by \eqref{4.29} always satisfies the characterization in Theorem \ref{t2.17} and substituting the entries of \eqref{4.29} into \eqref{4.17} yields the explicit decomposition into separated boundary condition extensions on the half interval,
$T^{(a,(b+a)/2)}_{\alpha,\pi}$ and $T^{(a,(b+a)/2)}_{\alpha',\pi/2}$, where
\begin{equation}
\cot(\alpha)=\frac{\theta_a(0,(b+a)/2)}{\varphi_a(0,(b+a)/2)},\quad
\cot(\alpha')=\frac{\theta_a^{[1]}(0,(b+a)/2)}{\varphi_a^{[1]}(0,(b+a)/2)},\quad \a,\a'\in(0,\pi).
\end{equation}

\medskip

\noindent {\bf Acknowledgments.}
We thank Fritz Gesztesy for valuable feedback and the anonymous referee for helpful suggestions. J.S. appreciates the hospitality of the Baylor University math department where part of this work was completed.


\end{document}